\numberwithin{equation}{section}
\newtheorem{theorem}{Theorem}[section]
\newtheorem{lemma}[theorem]{Lemma}
\newtheorem{proposition}[theorem]{Proposition}
\theoremstyle{definition}
\newtheorem{remark}[theorem]{Remark}
\newcommand{\mc}{\mathcal}
\newcommand{\mb}{\mathbb}
\newcommand{\la}{\lambda}
\newcommand{\norm}[1]{\left\lVert#1\right\rVert}
\newcommand{\pd}[2]{\frac{\partial#1}{\partial#2}}
\newcommand{\R}{\mb{R}}
\newcommand{\N}{\mb{N}}
\newcommand{\e}{\varepsilon}
\newcommand{\dive}{\mathop{\rm{div}}}
\newcommand{\Si}{\Sigma}
\newcommand{\Ee}{\mathsf{E}_\e}
\newcommand{\RN}{\R^N}
\newcommand{\dx}{\,{\rm d}x}
\newcommand{\ds}{\,{\rm d}\sigma}
\newcommand{\te}{\mc{T}_\e}
\newcommand{\tr}{\widetilde{\mathcal{T}}}
\begin{document}
\title[Quantitative Spectral Stability for the Robin Laplacian]
{Quantitative Spectral Stability for the Robin Laplacian}

\author[V. Felli, P. Roychowdhury,  and G. Siclari]{Veronica Felli, Prasun Roychowdhury, and Giovanni Siclari}

\address{Veronica Felli and Prasun Roychowdhury
  \newline \indent Dipartimento di Matematica e Applicazioni
  \newline \indent
Universit\`a degli Studi di Milano–Bicocca
\newline\indent Via Cozzi 55, 20125 Milano, Italy}
\email{veronica.felli@unimib.it, prasun.roychowdhury@unimib.it}

\address{Giovanni Siclari 
  \newline \indent Dipartimento di Matematica
  \newline \indent Politecnico di Milano
  \newline\indent Piazza Leonardo da Vinci 32, 20133 Milano, Italy}
\email{ giovanni.siclari@polimi.it}

\begin{abstract}
  This paper deals with eigenelements of the Laplacian in bounded
  domains, under Robin boundary conditions, without any assumption on
  the sign of the Robin parameter. We quantify the asymptotics of the
  variation of simple eigenvalues under the singular perturbation
  produced by removing a shrinking set and imposing the same Robin
  condition on its boundary. We also study the convergence rate of the
  corresponding eigenfunctions.
\end{abstract}

\maketitle

\hskip10pt{\footnotesize {\bf Keywords.} Spectral stability, asymptotics
  of eigenvalues, blow-up analysis, Robin boundary conditions.

\medskip

\hskip10pt{\bf 2020 MSC classification.}
35P15,  	
35B40,  	
35B44,  	
35B25.  	
}

\section{Introduction}\label{sec_intro}
Let $N\ge 2$ and $\Omega \subset \R^N$ be a bounded,
connected, and  Lipschitz open set.
We deal with Robin eigenvalue problems of the type
\begin{equation}\label{prob_robin}
\begin{cases}
-\Delta u= \la u, & \text{ in } \Omega,\\
\pd{u}{\nu} + \alpha u=0, & \text{ on } \partial \Omega,
\end{cases}
\end{equation}
where $\alpha \in \R$ and $\nu$ denotes the outer unit normal vector
to $\partial\Omega$.  Let $x_0\in \Omega$, $r_0>0$, and let
$\Sigma\subset\R^N$ be an open, bounded, and Lipschitz set such that
\begin{equation}\label{def_Sigma}
  \overline{B_{r_0}(x_0)}\subset\Omega,\quad 
  \overline{\Si} \subset B_{r_0},\quad \text{and}\quad
  B_{r_0}\setminus\overline{\Sigma}\text{ is connected},
\end{equation}
 where $B_{r_0}=\{y\in \R^N:|y|<r_0\}$ and $B_{r_0}(x_0)=x_0+B_{r_0}$.
For every $\e \in (0,1]$ we consider the scaled set 
\begin{equation}\label{def_Sigma_e}
  \Sigma_\e:=x_0+\e \Sigma.
\end{equation}
The aim of the present paper is to investigate the stability of the
spectrum of problem \eqref{prob_robin} under the removal of the set
$\overline{\Sigma_\e}$ from $\Omega$, as the parameter $\e \in (0,1]$
goes to $0$ and the hole $\overline{\Sigma_\e}$ consequently
disappears.  More precisely, we study the asymptotic behavior, as
$\e \to 0^+$, of the eigenvalues of the problem
\begin{equation}\label{prob_robin_perturbed}
\begin{cases}
-\Delta u= \la u, & \text{ in } \Omega\setminus\overline{\Sigma_\e},\\
\pd{u}{\nu} + \alpha u=0, & \text{ on } \partial \Omega,\\
\pd{u}{\nu} + \alpha u=0, & \text{ on } \partial \Sigma_\e,
\end{cases}
\end{equation}
proving, as a first step, that they converge to the eigenvalues of
problem \eqref{prob_robin}. While some results in this direction are
available in the literature, in this paper we are interested in
quantitative estimates, with the aim of computing the vanishing order
of the eigenvalue variation. We will focus on the case
\begin{equation}\label{eq:alpha-non-zero}
\alpha \neq 0
\end{equation}
since, while the choice $\alpha = 0$ could fall within our analysis as
a special case, it has already been analyzed in detail in
\cite{FLO_Neumann}.

In heat conduction theory, Robin boundary conditions serve to
interpolate between a perfectly insulating boundary, described by
Neumann boundary conditions $\alpha = 0$, and a temperature-fixing
boundary, represented by Dirichlet boundary conditions corresponding
to $\alpha= +\infty$. Notably, the Robin boundary condition is the
most commonly used in optical imaging, see
e.g. \cite{Aronson:95,Arridge20151033}.  Previous research has largely
focused on the initial few Robin eigenvalues, with particular emphasis
on applications in shape optimization, related isoperimetric
inequalities, and the asymptotic behavior of the lowest eigenvalues,
see e.g. \cite{BFK_survey}. The aim of the present paper is instead to
explore the stability of the spectrum of the Robin Laplacian amidst
singular perturbations, specifically the excision of small holes from
a bounded domain.  Given the significance of eigenvalues and
eigenvectors of differential operators in the theory of partial
differential equations, it is of interest to understand how
eigenelements are affected by small perturbations, such as changes in
the domain: this has relevance in various fields, including quantum
mechanics, materials science, heat conduction, climate modeling, and
acoustics.

Since the pioneering paper \cite{rauch_taylor:75}, a vast literature
about quantitative spectral stability for elliptic operators has
flourished. Geometric perturbations through domain perforation for the
Dirichlet Laplacian have been studied in
\cite{AFHC_spec_AB,ALM_cap_exp,
  FNO_disa_Dirichlet_region,O_Dirichlet_holes_euclidean} in Euclidean
spaces and in \cite{C_mainfols_holes} in the general setting of
Riemannian manifolds. The same kind of perturbations have been
investigated in \cite{AFN_fractional} for the fractional Laplacian
under Dirichlet boundary conditions and in \cite{FR_polyharmonic} for
polyharmonic operators, while the case of a generic Dirichlet form has
been addressed in \cite{BS_quantitative,McG_capacity}.  As discussed
in \cite{AFHC_spec_AB}, in dimension $N=2$ a relevant application of
the study of the Dirichlet Laplacian under removal of small slits can
be found, through some isospectrality property, in Aharonov-Bohm
operators with moving poles, see e.g.  \cite{FNOS_AB_12,FNS_AB} for
some recent quantitative spectral stability results.  Regarding
spectral stability under Neumann conditions on the boundary of small
holes, we refer to
\cite{FLO_Neumann,jimbo,LdC2012,nazarov2011,ozawa1983,ozawa1985}, in
addition to \cite{rauch_taylor:75}.

Eigenvalue problems in perforated domains have significant analogies
with problems with mixed varying boundary conditions, as highlighted
in \cite{FNO_disa_Dirichlet_region}, where a homogeneous Neumann
eigenvalue problem for the Laplacian is perturbed by prescribing zero
Dirichlet conditions on a small subset of the boundary.  We refer to
\cite{AO2023,FNO_disa_Neuman_region} for the complementary problem,
i.e.  the perturbation of the Dirichlet Laplacian by imposing a
homogeneous Neumann condition on a vanishing portion of the boundary.
Furthermore, such problems exhibit the same singular nature and
partially share the difficulties and methodological approaches even of
Dirichlet eigenvalue problems in domains with a thin tube attached to
the boundary, see \cite{AFT2014,AO2023,FO2020}.

  More closely related to the Robin Laplacian is the study of the
  dependence of the spectrum on $\alpha$ and the asymptotic behavior of
  eigenvalues as either $\alpha \to +\infty$ or $\alpha \to
  -\infty$. There are many results in this direction, see for example
  \cite{CCH_alpha,F_alpha,LP_alpha,O_Robin} and
  \cite{BFK_survey} for a comprehensive overview.  Regarding small
    perturbations produced by perforation, qualitative spectral
  stability in dimension $2$, under Dirichlet or Neumann conditions on
  $\partial \Omega$ and Robin conditions on the hole's boundary, has been
  established in \cite{BS2023}, in the case $\alpha>0$ (possibly
  depending on $\e$).  Under Dirichlet conditions on the external
  boundary of a planar domain and Robin conditions on the boundary of
  a removed shrinking disk, asymptotic expansions have been obtained in
  \cite{ozawa1,ozawa2}, letting the parameter $\alpha$ varying as
  $\kappa\e^\sigma$, with $\kappa>0$ and $\sigma\in\R$. The case of a large
  positive parameter $\alpha=\kappa\e^{-1}$, under Robin boundary
  conditions on both external and internal boundaries, has been
  treated in \cite{ward-henshaw-keller,ward-keller}, for a removed
  small subdomain of radius $O(\e)$.  Spectral stability for elliptic
  operators in perturbed domains subject to Robin boundary conditions
  has been investigated in \cite{DD1997}, for a wide class of domain
  perturbations, including cutting of small holes.  Finally, we
  mention \cite{BGT_domain_pert}  for spectral stability for the
  Laplacian under very general Robin conditions involving
  measures on the boundary, under some weak notion of convergence of sets.

  In the present paper, we first observe that the spectrum of the
  Robin problem is stable under the removal of small holes, i.e. the
  eigenvalues of the perturbed problem \eqref{prob_robin_perturbed}
  converge to those of the unperturbed one \eqref{prob_robin} as
  $\e\to0^+$, see Theorem \ref{theorem_spectral_stability}.  Although
  this result is well-known for $\alpha>0$, see e.g. \cite{DD1997} and
  \cite[Corollary 3.5]{BGT_domain_pert}, to the best of our knowledge
  it is new for $\alpha<0$.  Once it is understood that the eigenvalue
  variation due to the domain perturbation goes to zero as the hole
  disappears, we address the problem of quantifying its vanishing
  order for simple eigenvalues.  In Theorem
  \ref{theor_asympotic_eigen_precise_u_not_0}, we establish that such
  an order is $N-1$ if the point $x_0$, where the holes are
  concentrating, does not lie in the nodal set of the limit
  eigenfunction $u_n^0$.  On the other hand, if $u_n^0$ vanishes at
  $x_0$, the rate of convergence of the eigenvalues depends on the
  vanishing order of $u_n^0$ at $x_0$. This is proved in Theorem
  \ref{theor_asympotic_eigen_precise_u_0_dim_big} for $N\geq3$,
  exploiting the blow-up result provided by Theorem
  \ref{theorem_blow_up}, which allows us to obtain a sharp asymptotic
  expansion for the eigenvalue variation. In dimension $N=2$, the
  blow-up analysis fails to be effective; consequently, in Theorem
  \ref{theor_asympotic_eigen_precise_u_0_dim2} we are able to provide
  only an estimate for the vanishing rate of the eigenvalue variation,
  depending on the vanishing order of the limit eigenfunction.
  Nevertheless, in the case of a spherical hole we can derive a sharp
  expansion in any dimension $N\geq2$, providing a more explicit
  expression for its coefficients, see Theorems
  \ref{theor_asympotic_eigen_precise_round_u_0_dim_big} and
  \ref{theor_asympotic_eigen_precise_round_u_0_dim_2}.  Some results
  about the convergence of eigenfunctions are also obtained in
  Theorems \ref{theor_asympotic_eigenfunction_precise_u_0_dim_big} and
  \ref{theor_asympotic_eigenfunction_precise_u_0_dim_2}.

  The paper is organized as follows. In Section \ref{sec_main_results}
  we state and describe our main results. In Section \ref{sec_prelimi}
  we introduce the functional setting and discuss some preliminary
  trace and extension results; furthermore, we prove Theorem
  \ref{theorem_spectral_stability} establishing qualitative spectral
  stability. In Section \ref{sec_eign_var} an asymptotic expansion for
  the eigenvalue variation is obtained by applying the abstract result
  stated in Appendix \ref{sec_appendix}.  This analysis is then
  refined in Section \ref{sec_blow_up} by means of a blow up
  procedure. Furthermore, Section \ref{sec_round_hole} is focused on
  the case of round holes. Finally, in Appendix
  \ref{sec_appendix-extension} we give the details of the extension
  results presented in Section \ref{sec_prelimi} pointing out the
  relevance of the connectedness assumptions.

\section{Main results}\label{sec_main_results}
By classical spectral theory, see also Section \ref{sec_prelimi},
problem \eqref{prob_robin} admits a non-decreasing, diverging sequence
of eigenvalues $\{\la_j^0\}_{j \in \mathbb{N}\setminus\{0\}}$,
repeated according to their multiplicity.  Similarly, for every
$\e \in (0,1]$, \eqref{prob_robin_perturbed} admits a non-decreasing,
diverging sequence of eigenvalues
$\{\la_j^\e\}_{j \in \mathbb{N}\setminus\{0\}}$, counted with their
multiplicity.

The following spectral stability result serves as the starting point
of our analysis.
\begin{theorem}\label{theorem_spectral_stability}
For every $j \in \mb{N}\setminus\{0\}$
\begin{equation}\label{eq_limit_eigen}
  \lim_{\e \to 0^+}\la_j^\e= \la_j^0. 
\end{equation}
\end{theorem}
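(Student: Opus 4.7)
The plan is to prove $\lim_{\e\to 0^+}\la_j^\e=\la_j^0$ by establishing the two inequalities $\limsup_{\e\to 0^+}\la_j^\e\le\la_j^0$ and $\liminf_{\e\to 0^+}\la_j^\e\ge\la_j^0$ separately, via the min-max variational characterization of the eigenvalues combined with a uniform-in-$\e$ extension operator from $H^1(\Omega\setminus\overline{\Sigma_\e})$ to $H^1(\Omega)$.

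For the upper bound, I would take an $L^2(\Omega)$-orthonormal family of eigenfunctions $u_1^0,\dots,u_j^0$ of \eqref{prob_robin} associated with $\la_1^0,\dots,\la_j^0$ and use their restrictions to $\Omega\setminus\overline{\Sigma_\e}$ as a $j$-dimensional test space in the min-max formula for $\la_j^\e$. Linear independence persists for small $\e$ since $|\Sigma_\e|=O(\e^N)\to 0$. The bulk integrals $\int_{\Omega\setminus\overline{\Sigma_\e}}|\nabla u|^2\,dx$ and $\int_{\Omega\setminus\overline{\Sigma_\e}}u^2\,dx$ converge by dominated convergence, while the new boundary contribution $\alpha\int_{\partial\Sigma_\e}(u_i^0)^2\,d\sigma=O(\e^{N-1})$ since the $u_i^0$ are smooth at the interior point $x_0$. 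Hence the Rayleigh quotients on the test space converge to their unperturbed counterparts, yielding the desired upper bound.

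For the lower bound, the strategy is to extract subsequential limits of perturbed eigenfunctions. Fix $\e_k\to 0^+$ with $\la_j^{\e_k}\to \la^*$ (finite by the upper bound) and pick $L^2$-orthonormal eigenfunctions $u_i^{\e_k}$ for $\la_i^{\e_k}$, $1\le i\le j$. A uniform trace estimate across $\partial\Sigma_{\e_k}$, obtained by rescaling $y=(x-x_0)/\e_k$ and invoking the fixed trace inequality on $\partial\Sigma$, absorbs the negative Robin contribution into a small fraction of $\|u\|_{H^1}^2$ and yields coercivity of $q_\e(\cdot)+M\|\cdot\|_{L^2}^2$ uniformly in $\e$; together with the eigenvalue equation this gives uniform $H^1$ bounds. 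Extending each $u_i^{\e_k}$ to $\tilde u_i^{\e_k}\in H^1(\Omega)$ via the uniformly bounded extension operator of Section \ref{sec_prelimi}, I would extract weakly convergent subsequences $\tilde u_i^{\e_k}\rightharpoonup v_i$ in $H^1(\Omega)$ and verify that $v_1,\dots,v_j$ are $L^2(\Omega)$-orthonormal: the mass added on $\Sigma_{\e_k}$ vanishes in the limit by Sobolev embedding and $|\Sigma_{\e_k}|=O(\e_k^N)$.

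To identify $v_i$ as an eigenfunction of \eqref{prob_robin} with eigenvalue $\la_i^*:=\lim_k\la_i^{\e_k}\le\la^*$, I would test the weak formulation of the perturbed problem with $\varphi\in H^1(\Omega)$ vanishing in a neighborhood of $x_0$; for such $\varphi$ the boundary term $\alpha\int_{\partial\Sigma_{\e_k}}u_i^{\e_k}\varphi\,d\sigma$ vanishes for $\e_k$ small, and the remaining terms pass to the limit via the weak convergence of $\tilde u_i^{\e_k}$ in $H^1(\Omega)$ and the strong convergence of its traces on $\partial\Omega$. A density argument based on the fact that the singleton $\{x_0\}$ has zero $H^1$-capacity in $\R^N$ for $N\ge 2$ then promotes the conclusion to all $\varphi\in H^1(\Omega)$, so that each $v_i$ solves \eqref{prob_robin} with eigenvalue $\la_i^*$. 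Applying the min-max principle to the $L^2$-orthonormal eigenfunctions $v_1,\dots,v_j$, whose eigenvalues $\la_i^*$ all satisfy $\la_i^*\le\la^*$, yields $\la_j^0\le\la^*$, which together with the upper bound closes the argument. The main technical obstacle is the construction of the uniform extension operator, which exploits the connectedness of $B_{r_0}\setminus\overline{\Sigma}$ from \eqref{def_Sigma} and the scaling invariance of the problem, and is apparently carried out in detail in the dedicated appendix.
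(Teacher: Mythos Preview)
Your argument is correct and complete. It differs from the paper's proof in a substantive way, so a brief comparison is in order.

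The paper does not argue via the min--max principle at all. Instead, it works with the resolvents: letting $R_\e$ be the inverse of $-\Delta+c_\alpha$ on $\Omega_\e$ with Robin conditions, it defines $\widetilde R_\e:=\Ee R_\e P_\e:L^2(\Omega)\to L^2(\Omega)$ (with $P_\e$ the restriction), first proves strong convergence $\widetilde R_\e u\to R_0 u$ in $L^2(\Omega)$ for each $u$ (Proposition~\ref{prop_Re_to_R0}), and then upgrades this to norm convergence $\|\widetilde R_\e-R_0\|_{\mc L(L^2(\Omega))}\to 0$ by a compactness argument on near-maximizers. The eigenvalue convergence \eqref{eq_limit_eigen} then follows from an abstract spectral perturbation lemma in Dunford--Schwartz. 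Your route is more elementary in that it avoids the resolvent machinery and the external spectral lemma, and it yields exactly the statement needed; the paper's route, on the other hand, proves a genuinely stronger intermediate fact (norm-resolvent convergence), which also gives convergence of spectral projections and would be reusable in other contexts. Both proofs rest on the same two analytic inputs that you correctly identified: the uniformly bounded extension operator of Proposition~\ref{prop_extension} and the scaled trace inequality on $\partial\Sigma_\e$ (your uniform coercivity is exactly \eqref{ineq_norm_qe}). Your density step for test functions vanishing near $x_0$ is likewise used by the paper, later in Proposition~\ref{prop_norm_Ve_L2_H1}.
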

Once the above spectral stability is achieved, our main goal is to
quantify, for simple eigenvalues, the convergence rate of $\la_j^\e$
to $\la_j^0$. Let us fix $n\in\N\setminus\{0\}$ such that
\begin{equation}\label{eq:simple}
  \la_n^0\text{ is simple}.
\end{equation}
Let $u_n^0\in H^1(\Omega)$ be an associated eigenfunction
with
\begin{equation}\label{hp_u_n0}
\int_\Omega |u_n^0(x)|^2 \dx =1.
\end{equation}
By classical regularity theory, $u_n^0$ is smooth in
$\Omega$. For every $\e\in(0,1]$ we choose an
eigenfunction $u_n^\e$ associated
with $\la_n^\e$ such that
\begin{equation}\label{hp_u_ne}
  \int_{\Omega\setminus\overline{\Sigma_\e}} |u_n^\e(x)|^2 \dx =1
  \quad \text{ and } \quad \int_{\Omega\setminus\overline{\Sigma_\e}}
  u_n^0(x) u_n^\e(x) \dx \geq0.
\end{equation}
Different behaviors are observed in the two cases $u_n^0(x_0)\neq 0$
and $u_n^0(x_0)= 0$.
\begin{theorem}\label{theor_asympotic_eigen_precise_u_not_0}
Let $N\geq 2$ and $u_n^0(x_0)\neq 0$.  Then 
\begin{equation*}
  \la_n^\e -\la_n^0 = \alpha\, \mc{H}^{N-1}(\partial \Sigma)\, |u_n^0(x_0)|^2
  \e^{N-1}+ o(\e^{N-1})
  \quad \text{ as } \e \to 0^+,
\end{equation*}
where $\mc{H}^{N-1}$ is the $(N-1)$-dimensional
  Hausdorff measure.
\end{theorem}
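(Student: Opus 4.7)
The plan is to apply the abstract perturbation lemma stated in Appendix~\ref{sec_appendix} (which is the main analytic device of Section~\ref{sec_eign_var}) to the simple eigenvalue $\la_n^0$. Thanks to Theorem~\ref{theorem_spectral_stability} and to the simplicity assumption \eqref{eq:simple}, this abstract input reduces the quantitative analysis of $\la_n^\e-\la_n^0$ to the evaluation, on the unperturbed eigenfunction $u_n^0$, of the difference between the perturbed and the unperturbed quadratic forms. A key simplification in the Robin setting is that, since no Dirichlet condition is imposed, the restriction of $u_n^0$ to $\Omega\setminus\overline{\Si_\e}$ already lies in the perturbed form domain and can be used directly as a test function: no capacity-based extension of $u_n^0$ across the hole is needed, unlike in the Dirichlet case.

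Once the abstract lemma is invoked, the proof is a direct local computation. Setting
\begin{equation*}
\mc{Q}_\e(v):=\int_{\Omega\setminus\overline{\Si_\e}}|\nabla v|^2\dx+\alpha\int_{\p\Omega}v^2\ds+\alpha\int_{\p\Si_\e}v^2\ds,
\end{equation*}
and using $\mc{Q}_0(u_n^0)=\la_n^0$ with $\|u_n^0\|_{L^2(\Omega)}=1$, one gets
\begin{equation*}
\mc{Q}_\e(u_n^0)-\la_n^0=-\int_{\Si_\e}|\nabla u_n^0|^2\dx+\alpha\int_{\p\Si_\e}|u_n^0|^2\ds.
\end{equation*}
Since $x_0$ is an interior point of $\Omega$, classical elliptic regularity yields that $u_n^0$ is smooth in a neighbourhood of $x_0$, so $u_n^0$ and $\nabla u_n^0$ are uniformly bounded on $\Si_\e$ for small $\e$. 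Hence the two bulk contributions $\int_{\Si_\e}|\nabla u_n^0|^2\dx$ and the denominator correction $\int_{\Si_\e}|u_n^0|^2\dx$ (arising after dividing by $\|u_n^0\|^2_{L^2(\Omega\setminus\overline{\Si_\e})}=1-O(\e^N)$) are both $O(\e^N)$, hence negligible. The boundary term, on the contrary, carries the leading order: the change of variables $x=x_0+\e y$ together with the continuity of $u_n^0$ at $x_0$ yields
\begin{equation*}
\int_{\p\Si_\e}|u_n^0|^2\ds=\e^{N-1}\!\int_{\p\Si}|u_n^0(x_0+\e y)|^2\ds(y)=|u_n^0(x_0)|^2\,\mc{H}^{N-1}(\p\Si)\,\e^{N-1}+o(\e^{N-1}).
\end{equation*}
Collecting these estimates produces exactly the claimed expansion.

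The main obstacle lies in verifying the hypotheses of the abstract lemma in Appendix~\ref{sec_appendix} in the present Robin context, so that the naive Rayleigh-quotient computation above is in fact sharp (i.e. the remainder is $o(\e^{N-1})$ and not just $O(\e^{N-1})$). This is where the simplicity of $\la_n^0$, the spectral convergence provided by Theorem~\ref{theorem_spectral_stability}, and a control on the $H^1$-distance between $u_n^\e$ and $u_n^0$ must be combined. A secondary technical point is the possibility $\alpha<0$, in which case the bilinear form is not a priori coercive; this is handled by the standard shift trick, replacing the forms by their translates $\mc{Q}_\e+M\|\cdot\|_{L^2}^2$ for $M$ large enough, which preserves the eigenvalue variation and restores coercivity. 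Once these abstract ingredients are secured, the assumption $u_n^0(x_0)\neq 0$ guarantees that the boundary contribution does not degenerate and the leading coefficient $\alpha\,\mc{H}^{N-1}(\p\Si)|u_n^0(x_0)|^2$ is nonzero, yielding the stated sharp asymptotics.
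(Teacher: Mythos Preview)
Your overall strategy matches the paper's, and the leading term you identify is correct. There is, however, a real gap in the remainder control. The output of the abstract lemma (Theorem~\ref{theorem_expansion_very_abs}) is \emph{not} ``$\la_n^\e-\la_n^0$ equals the Rayleigh-quotient defect of $u_n^0$ up to $o(\e^{N-1})$''; it is an expression in the corrector $V_\e$, and when unpacked (Theorem~\ref{theo_eigen_var_abs}, formula~\eqref{eq_eigen_var_abs}) it reads
\[
\la_n^\e-\la_n^0=-\mc{T}_\e-\int_{\Si_\e}\big(|\nabla u_n^0|^2-\la_n^0|u_n^0|^2\big)\dx+\alpha\int_{\p\Si_\e}|u_n^0|^2\ds+\text{l.o.t.},
\]
with the torsional rigidity $\mc{T}_\e=q_\e(V_\e,V_\e)$ appearing explicitly. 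Your computation handles the second and third terms; to conclude you must show $\mc{T}_\e=o(\e^{N-1})$. Simplicity of $\la_n^0$ and qualitative spectral stability give only $\mc{T}_\e\to0$, which is not enough.

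The paper closes this gap via the \emph{scaled trace inequality} of Proposition~\ref{prop_traces}: $\int_{\p\Si_\e}v^2\ds\le C\e\,\|v\|_{H^1(\Omega_\e)}^2$ for $N\ge3$ (respectively $C_\delta\e^{1-\delta}$ for $N=2$). Combined with the sup-characterization~\eqref{eq_chara_torsion} of $\mc{T}_\e$ and smoothness of $u_n^0$ near $x_0$, this yields $\mc{T}_\e=O(\e^N)$ for $N\ge3$ and $O(\e^{2-\delta})$ for $N=2$ (Proposition~\ref{prop_estimates_torsion}), both $o(\e^{N-1})$. The ``control on the $H^1$-distance between $u_n^\e$ and $u_n^0$'' that you invoke is, by~\eqref{eq_eigenfun_abs}, precisely of order $\mc{T}_\e^{1/2}$, so establishing it at the right rate is the same problem; the scaled trace inequality---which in turn rests on the uniform extension operator of Proposition~\ref{prop_extension}---is the substantive missing ingredient in your sketch.
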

An immediate consequence of the above theorem is that, if
$u_n^0(x_0)\neq 0$, for small $\e$ the variation $\la_n^\e -\la_n^0$
has the same sign as the parameter $\alpha$. In particular, if
$\alpha>0$, we have $\la_n^\e>\la_n^0$ provided $\e$ is sufficiently
small. This is consistent with the fact that, as $\alpha \to +\infty$,
the Robin problem tends to be a Dirichlet problem, whose eigenvalues
are monotone with respect to domain inclusion, in the sense that each
Dirichlet eigenvalue increases as the domain gets smaller.  It is
worth noting that, in the presence of small holes, the Robin problem
exhibits an order of convergence $N-1$ for the perturbed eigenvalues
which is intermediate between $N-2$, which is the order obtained in
\cite{AFHC_spec_AB} (see also \cite[Theorem
2.14]{FNO_disa_Dirichlet_region} and \cite [Appendix A]{ALM24}) for
the Dirichlet problem, and $N$, which is the one observed for the
Neumann problem in \cite{FLO_Neumann}. This behavior is naturally
expected, given the nature of the Robin operator as an intermediary
between the Dirichlet and the Neumann Laplacians.

In the case $u_n^0(x_0)= 0$, the situation is more intricate, since
other quantities, that vanish faster than $ \e^{N-1}$, come into play.
The vanishing order of the eigenvalue variation $\la_n^{\e}-\la_n^0$
can be quantified in terms of the asymptotic behavior of the 
eigenfunctions at $x_0$. For every 
$u\in C^\infty(\Omega)$ and $i\in \N$, we consider the
polynomial
\begin{align}\label{def_P_ui}
P^{u,x_0}_i(x):=\sum_{\beta\in \N^N, \, |\beta|=i}\frac{1}{\beta!}D^\beta
  u(x_0)x^\beta,\quad
  x\in\R^N,
\end{align}
where $|\beta|=\sum_{j=1}^N\beta_j$ denotes the length of the
multi-index $\beta=(\beta_1, \dots, \beta_N) \in \mb{N}^N$. In
particular, for $i=0$ we have $P^{u,x_0}_0(x):=u(x_0)$. We say that $u$ vanishes
of order $j\in \N$ at $x_0$ if
\begin{align*}
  P^{u,x_0}_j\not\equiv 0 \quad \text{and}\quad
  P^{u,x_0}_i\equiv 0 \quad  \text{for all } 0\leq i<j.
\end{align*}
In particular, if $u(x_0)\neq0$, the vanishing order of $u$ at $x_0$
is $0$.

From now on, we will denote by $k\geq 0$ the vanishing order of
$u_n^0$ at $x_0$.  We define
\begin{equation}\label{def_h}
 h:= \max\{k,1\}.
\end{equation}  
In dimension $N \ge 3$, we consider 
the unique finite energy weak  solution $\widetilde{V}$ to the problem  
\begin{equation}\label{srtel-rp}
\begin{dcases}
-\Delta \widetilde{V} = 0, & \text{in } \R^N\setminus \overline \Sigma, \\
\partial_\nu \widetilde{V} =  \partial_\nu P_h^{u_n^0,x_0}+\alpha\,
u_n^0(x_0), &
\text{on } \partial\Sigma,\\
\end{dcases}
\end{equation}
see Proposition \ref{prop_min_tilde_J}. The related quantity
\begin{equation}\label{def_torsion_global}
  \tr:= \int_{\R^N\setminus \Sigma}|\nabla \widetilde
  V|^2\dx=\int_{\partial \Sigma}
  \left(\partial_\nu P_h^{u_n^0,x_0} +\alpha\,
  u_n^0(x_0)\right) \widetilde{V}\ds
\end{equation}
plays a crucial role in the asymptotic expansion of perturbed
eigenelements.

The following theorem shows that, if $u_n^0(x_0)=0$ and $N\geq3$, the
quantity $\tr$ appears in the first significant term of the asymptotic
expansion of the eigenvalue variation.

\begin{theorem}\label{theor_asympotic_eigen_precise_u_0_dim_big}
  Let $N\geq 3$. If $u_n^0(x_0)=0$ and $k\geq1$ is the vanishing order
  of $u_n^0$ at $x_0$, then
\begin{equation*}
\la_n^\e -\la_n^0 = -\e^{N+2k-2}\left[\tr +
\int_{\Sigma}|\nabla P_k^{u_n^0,x_0}|^2\dx\right]+o(\e^{N+2k-2}) \quad \text{ as } \e \to 0^+.
\end{equation*}
\end{theorem}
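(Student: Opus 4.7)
The plan is to combine the first-order spectral-stability expansion provided by the abstract result in Appendix \ref{sec_appendix} (and applied in Section \ref{sec_eign_var}) with the blow-up analysis of Theorem \ref{theorem_blow_up} in order to identify the sharp leading coefficient. The abstract result reduces the computation of $\la_n^\e-\la_n^0$ to the evaluation of a Dirichlet-energy-type functional built from $u_n^0$ and a corrector $w_\e$ on $\Omega\setminus\overline{\Sigma_\e}$, whose inhomogeneous datum on $\partial\Sigma_\e$ is essentially $\partial_\nu u_n^0+\alpha u_n^0$. Since $u_n^0(x_0)=0$ with vanishing order $k\geq 1$, Taylor's theorem at $x_0$, combined with the eigenfunction equation (which forces the lowest-order polynomial $P_k^{u_n^0,x_0}$ to be harmonic by matching homogeneous pieces of $-\Delta u_n^0=\la_n^0 u_n^0$), yields that this datum is of size $\e^{k-1}$ in the $\e$-neighborhood of $x_0$, pinning down the natural scaling.

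Next, I would perform the blow-up $y=(x-x_0)/\e$, mapping $\Sigma_\e$ to $\Sigma$, and set $\widetilde w_\e(y):=\e^{-k}w_\e(x_0+\e y)$. Theorem \ref{theorem_blow_up} then asserts that $\widetilde w_\e\to\widetilde V$, the unique finite-energy solution of \eqref{srtel-rp}, in the appropriate variational sense on $\R^N\setminus\overline{\Sigma}$. The hypothesis $N\geq 3$ enters precisely here, through Proposition \ref{prop_min_tilde_J}, because only in this range $\widetilde V$ decays fast enough to have finite Dirichlet energy; in dimension two the exterior Neumann problem has only logarithmic solutions, which is why that case is handled separately in Theorem \ref{theor_asympotic_eigen_precise_u_0_dim2}. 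Substituting the blow-up into the energy functional produces the scale $\e^{N+2k-2}$ (a factor $\e^N$ from the Jacobian and $\e^{2k-2}$ from rescaling $|\nabla w_\e|^2$). Passing to the limit, the exterior integral converges to $\int_{\R^N\setminus\Sigma}|\nabla\widetilde V|^2\,dy=\tr$, while the contribution arising from $\e^{-2k}\int_{\Sigma_\e}|\nabla u_n^0|^2\dx$ converges to $\int_\Sigma|\nabla P_k^{u_n^0,x_0}|^2\,dy$ by the Taylor expansion. Their sum, with the sign fixed by the variational formula from the appendix, is precisely the bracketed coefficient in the statement.

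The main technical obstacle is upgrading a ``leading term times $(1+o(1))$'' estimate into the sharper additive form ``leading term plus $o(\e^{N+2k-2})$''. This requires quantitative control of three ingredients: the remainder of the Taylor expansion of $u_n^0$ (manageable since $u_n^0\in C^\infty(\Omega)$ by classical regularity), an effective rate of convergence $\widetilde w_\e\to\widetilde V$ supplied by Theorem \ref{theorem_blow_up}, and sharp trace estimates on $\partial\Sigma_\e$ coupled with the finite-energy decay of $\widetilde V$ at infinity. Balancing these three sources of error against the dominant scale $\e^{N+2k-2}$ is what makes the argument delicate, and the $N\geq 3$ assumption is exactly what keeps the exterior analysis well-posed.
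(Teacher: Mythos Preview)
Your approach is correct and essentially the same as the paper's: the proof of Theorem~\ref{theor_asympotic_eigen_precise_u_0_dim_big} is a direct combination of Theorem~\ref{theo_eigen_var_abs} (the abstract expansion you reference from Section~\ref{sec_eign_var}), Proposition~\ref{prop_asympotic_sigma_and_boundary_sigma} (the Taylor expansions of the $\Sigma_\e$-integrals), and Theorem~\ref{theorem_blow_up} (the blow-up limit). One clarification on your last paragraph: the ``upgrading'' to additive form $o(\e^{N+2k-2})$ that you flag as the main obstacle is already packaged in Theorem~\ref{theo_eigen_var_abs}, which gives the expansion as $-\mc{T}_\e-\int_{\Sigma_\e}|\nabla u_n^0|^2+\text{(lower order)}+o(\mc{T}_\e)+o(\cdots)$, so you do \emph{not} need any quantitative rate for $\widetilde w_\e\to\widetilde V$; the bare limit $\e^{-(N+2k-2)}\mc{T}_\e\to\tr$ from Theorem~\ref{theorem_blow_up} suffices.
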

Under the hypotheses of Theorem
\ref{theor_asympotic_eigen_precise_u_0_dim_big}, in particular under
the condition $u_n^0(x_0)=0$, we have $h=k$ and formula
\eqref{def_torsion_global} defining $\tr$ simplifies to
  \begin{equation*}
  \tr= \int_{\partial \Sigma} \big(\partial_\nu P_k^{u_n^0,x_0} \big) \widetilde{V}\ds.
\end{equation*}
In particular, in this case, the dominant term of
  the asymptotic expansion is the same as obtained in \cite{FLO_Neumann} for the
  Neumann problem.
Anyway, it is convenient to consider the more general definition of
$\tr$ given in \eqref{def_torsion_global}, which also contemplates the
case $u_n^0(x_0)\neq0$, since this will come into play in the
description of the variation of eigenfunctions even if
$u_n^0(x_0)\neq 0$, see Theorem
\ref{theor_asympotic_eigenfunction_precise_u_0_dim_big} below.

Theorem \ref{theor_asympotic_eigen_precise_u_0_dim_big} implies that,
if $N\geq3$, $u_n^0(x_0)=0$, and $\e$ is sufficiently small, the
eigenvalue variation $\la_n^\e -\la_n^0$ has a negative sign,
regardless of the sign of the parameter $\alpha$ and in contrast to
what happens in the Dirichlet case.

The proof of Theorem \ref{theor_asympotic_eigen_precise_u_0_dim_big}
is based on a blow-up analysis which does not seem to have a direct
generalization to the $2$-dimensional case; indeed, a Hardy type
inequality is used to identify the concrete functional space
containing the limit profile $\widetilde{V}$, and this is not
available in dimension $N=2$. However, if $N=2$, we can still provide
a rougher estimate of the eigenvalue variation.

\begin{theorem}\label{theor_asympotic_eigen_precise_u_0_dim2}
  Let $N=2$.  If $u_n^0(x_0)=0$ and $k\geq1$ is the vanishing order of
  $u_n^0$ at $x_0$, then
\begin{equation*}
\la_n^\e -\la_n^0 = O(\e^{2k-\delta}) \quad \text{as } \e \to 0^+, 
\end{equation*}
for every $\delta>0$.
\end{theorem}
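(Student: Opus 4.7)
The argument follows the same overall scheme as Theorem \ref{theor_asympotic_eigen_precise_u_0_dim_big}, but stops short of the blow-up identification, which is where dimension two becomes obstructive. The first step is to apply the abstract result of Appendix \ref{sec_appendix} to reduce the estimation of $\la_n^\e - \la_n^0$ to bounding the norm (in a suitable Hilbert space) of the ``deficit'' produced by the failure of $u_n^0$ to satisfy the Robin boundary condition on $\partial \Sigma_\e$. Equivalently, one must exhibit a competitor $\psi_\e\in H^1(\Omega\setminus\overline{\Sigma_\e})$ whose trace on $\partial\Sigma_\e$ compensates for the mismatch of $u_n^0$ there, and bound its Dirichlet energy together with the Robin boundary integral on $\partial\Sigma_\e$.

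By Taylor expansion, near $x_0$ one has
\[
u_n^0(x) = P_k^{u_n^0,x_0}(x-x_0) + O(|x-x_0|^{k+1}),
\]
so in the rescaled variable $y=(x-x_0)/\e$ the natural corrector is $\e^k V(y)$, where $V$ is an exterior harmonic extension on $\R^2\setminus\overline\Sigma$ with Neumann data $\partial_\nu P_k^{u_n^0,x_0}$ on $\partial\Sigma$. In dimension $N\geq 3$ (Theorem \ref{theor_asympotic_eigen_precise_u_0_dim_big}) such a $V$ lives in the finite-energy space and yields the sharp coefficient involving $\tr$; in dimension two, however, exterior Neumann harmonic functions grow logarithmically at infinity and do not belong to any finite-energy exterior space. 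This is precisely the gap Hardy's inequality fills when $N\geq 3$ and which has no counterpart when $N=2$.

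To bypass this obstruction I would multiply a truncated version of $V$ by a logarithmic cutoff of the form $\eta_\e(x)=\chi(\log(|x-x_0|/\e)/\log(1/\e))$, with $\chi$ a fixed smooth plateau function, so that $\eta_\e\equiv 1$ in a neighborhood of $\Sigma_\e$, $\eta_\e\equiv 0$ outside $B_{r_\e}(x_0)$ with $r_\e=\e^\beta$ for some $\beta\in(0,1)$, and $|\nabla\eta_\e(x)|\leq C/(|\log\e|\,|x-x_0|)$. Using the 2D annular Hardy inequality together with the $O(\log|y|)$ behavior of $V$ at infinity, one checks that $\psi_\e(x)=\eta_\e(x)\,\e^k V((x-x_0)/\e)$ satisfies
\[
\int_{\Omega\setminus\overline{\Sigma_\e}}|\nabla\psi_\e|^2\dx = O(\e^{2k}|\log\e|^c),
\]
while the Robin contribution $\alpha\int_{\partial\Sigma_\e}|\psi_\e|^2\ds$ is of order $\e^{2k+1}$ and is therefore negligible. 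Plugged into the abstract estimate of Appendix \ref{sec_appendix}, this yields $|\la_n^\e-\la_n^0|\leq C\e^{2k}|\log\e|^c$, and since $|\log\e|^c=o(\e^{-\delta})$ for every $\delta>0$, the conclusion follows.

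\textbf{Main obstacle.} The principal technical difficulty is the simultaneous reconciliation of three competing requirements on the truncated corrector $\psi_\e$: (i) the trace on $\partial\Sigma_\e$ must match the relevant part of $u_n^0$ modulo terms of order higher than $\e^k$; (ii) the exterior profile $V$ is only defined up to a logarithmic tail that the cutoff must tame; and (iii) the Dirichlet energy produced by the cutoff region $\{r_\e/2<|x-x_0|<r_\e\}$ must not exceed $\e^{2k}$ times a polylogarithmic factor, necessitating a delicate interplay between the scales $\e$ and $r_\e$ and the use of a precise Hardy-type bound in annuli. This mild polylogarithmic loss is exactly what forces the theorem to be stated as the bound $O(\e^{2k-\delta})$ rather than as a sharp asymptotic formula, and is the reason why the blow-up analysis of Theorem \ref{theorem_blow_up} cannot be carried out verbatim in dimension two.
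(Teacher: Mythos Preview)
Your plan is considerably more elaborate than what the paper actually does, and it contains two points that need correction.

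\textbf{What the paper does.} The proof is a one-line consequence of Theorem~\ref{theo_eigen_var_abs}, Proposition~\ref{prop_estimates_torsion}, and Proposition~\ref{prop_asympotic_sigma_and_boundary_sigma}. Theorem~\ref{theo_eigen_var_abs} gives
\[
\la_n^\e-\la_n^0 = -\mc{T}_\e - \int_{\Sigma_\e}\big(|\nabla u_n^0|^2-\la_n^0|u_n^0|^2\big)\dx + \alpha\int_{\partial\Sigma_\e}|u_n^0|^2\ds + \text{lower order},
\]
the last two integrals are $O(\e^{2k})$ by Taylor expansion, and the upper bound $\mc{T}_\e=O(\e^{2k-\delta})$ comes from the \emph{dual} characterization $\mc{T}_\e=\sup_u (L_\e(u))^2/q_\e(u,u)$ of Lemma~\ref{lemma_torsion_equiv} combined with the two-dimensional trace inequality \eqref{ineq_traces_2},
\[
\int_{\partial\Sigma_\e}u^2\ds \le C_\delta\,\e^{1-\delta}\|u\|_{H^1(\Omega_\e)}^2,
\]
which itself is just H\"older plus the embedding $H^1(\R^2)\hookrightarrow L^p$ for every finite $p$. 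No exterior profile, no cutoff, no corrector is needed; the $\e^{-\delta}$ loss comes entirely from this trace estimate.

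\textbf{Your diagnosis of the $2$D obstruction is off.} You assert that the exterior Neumann profile $V$ grows logarithmically at infinity. But under the hypothesis $u_n^0(x_0)=0$ the Neumann datum on $\partial\Sigma$ is $\partial_\nu P_k^{u_n^0,x_0}$, and since $P_k^{u_n^0,x_0}$ is harmonic this datum has zero mean over $\partial\Sigma$. Hence the exterior harmonic extension actually decays, with finite Dirichlet energy, even in dimension two (see the explicit formula \eqref{eq_Wer_explicit} in the round-hole case). The true obstruction to carrying over Theorem~\ref{theorem_blow_up} is the failure of the Hardy inequality of Lemma~\ref{Hardy}, which the paper uses to obtain uniform bounds on the rescaled $\widetilde V_\e$ in a concrete function space; it is not the nonexistence of a limit profile.

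\textbf{Direction of the inequality.} Exhibiting a single competitor $\psi_\e$ and bounding $q_\e(\psi_\e,\psi_\e)$ yields, via $J_\e(V_\e)\le J_\e(\psi_\e)$, only a \emph{lower} bound on $\mc{T}_\e$. To bound $\mc{T}_\e$ from above along your route you would need to show that $\psi_\e$ approximately \emph{solves} the corrector equation, i.e.\ control the residual $v\mapsto q_\e(\psi_\e,v)-L_\e(v)$ as a functional on $H^1(\Omega_\e)$, and then argue $\|V_\e-\psi_\e\|\le C\|\text{residual}\|$. Your outline omits this step entirely. With that addition (and dropping the unneeded logarithmic cutoff) the approach could be made to work, but it is strictly more laborious than the paper's two-line trace-inequality argument.
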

From the blow-up analysis, it is also possible to obtain precise
information on the behavior of the perturbed eigenfunctions, as stated
in the following theorem.
\begin{theorem}\label{theor_asympotic_eigenfunction_precise_u_0_dim_big}
Let $N\geq 3$. Let 
\begin{align*}
    \Phi_\e(x):=\frac{u_n^\e(x_0+\e x)-u_n^0(x_0)}{\e^{h}} \quad \text{ and }
 \quad \Phi^0_\e(x):=\frac{u_n^0(x_0+\e x)-u_n^0(x_0)}{\e^{h}},
 \end{align*}
 where $u_n^0$ and $u_n^\e$ are as in \eqref{hp_u_n0} and
 \eqref{hp_u_ne} respectively, and $h$ is defined in
 \eqref{def_h}. Then, for every $R>0$ such that
 $\overline{\Sigma}\subset B_R$,
\begin{align}\label{eqn-1-rate-ef}
  \Phi_\e\rightarrow P_h^{u_n^0,x_0}-\widetilde{V} \quad \text{ in }
  H^1(B_R\setminus \overline{\Sigma}) \quad \text{ as } \e\rightarrow 0^+,
\end{align}
where $\widetilde{V}$ is the unique finite energy weak solution of
\eqref{srtel-rp} (in the sense clarified in \eqref{swrtel-rp}) and
$P_h^{u_n^0,x_0}$ is defined in \eqref{def_P_ui}. Furthermore,
\begin{align}\label{eqn-2-rate-ef}
    \lim_{\e\rightarrow 0} \e^{-(N+2h-2)}\|u_n^\e-u_n^0\|^2_{H^1(\Omega \setminus { \overline{\Sigma_\e}})}=\tr,
\end{align}
where $\tr$ is defined in \eqref{def_torsion_global}.
\end{theorem}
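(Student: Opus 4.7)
The plan is to run a blow-up analysis centered at $x_0$ with the scaling fixed by $\Phi_\e$, show that the rescaled difference $V_\e := \Phi_\e^0 - \Phi_\e$ converges to $\widetilde V$ in $H^1(B_R\setminus\overline\Sigma)$, and then translate this local statement into the global identity \eqref{eqn-2-rate-ef}.

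First I would observe that $\Phi_\e^0 \to P_h^{u_n^0,x_0}$ in $H^1(B_R)$ by a direct Taylor expansion: if $k\geq 1$ this is the leading homogeneous term of $u_n^0$ at $x_0$, and if $k=0$ (so $h=1$) it is the linear part $\nabla u_n^0(x_0)\cdot x$. Next, changing variables $y=x_0+\e x$ in the equations satisfied by $u_n^\e$ and $u_n^0$, I would derive the boundary value problem for $V_\e$ in $B_R\setminus\overline\Sigma$: the Laplacian of $V_\e$ is $\e^{2-h}[\lambda_n^\e u_n^\e(x_0+\e x)-\lambda_n^0 u_n^0(x_0+\e x)]$, which is $O(\e^2)$ in sup norm, while on $\partial\Sigma$ the Robin condition $\partial_\nu u_n^\e+\alpha u_n^\e=0$ gives
\begin{equation*}
\partial_\nu V_\e = \partial_\nu \Phi_\e^0 + \alpha\e\,\Phi_\e +\alpha\e^{1-h}u_n^0(x_0),
\end{equation*}
where the last term is $\alpha u_n^0(x_0)$ when $h=1$ and vanishes when $h\geq 2$ (since then $u_n^0(x_0)=0$). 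Passing formally to the limit, the boundary data for $V_\e$ tends to $\partial_\nu P_h^{u_n^0,x_0}+\alpha u_n^0(x_0)$, which is exactly the Neumann datum for $\widetilde V$ in \eqref{srtel-rp}.

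To make this rigorous I would invoke the blow-up analysis of Theorem \ref{theorem_blow_up} to get uniform $H^1(B_R\setminus\overline\Sigma)$ bounds on $V_\e$ (the Hardy-type inequality available for $N\geq 3$, referenced in the text, is what pins down the correct functional space for the limit and prevents loss at infinity). Weak $H^1$ compactness then produces a limit, which by the boundary/interior computation above is a finite-energy weak solution of \eqref{srtel-rp}; the uniqueness from Proposition \ref{prop_min_tilde_J} identifies it as $\widetilde V$, and so the entire family converges. Strong convergence in $H^1(B_R\setminus\overline\Sigma)$ then follows from the convergence of the Dirichlet energies, obtained by testing the rescaled equations for $V_\e$ with $V_\e$ itself and using the convergence of the boundary data shown above; this proves \eqref{eqn-1-rate-ef}.

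For \eqref{eqn-2-rate-ef}, the change of variables gives
\begin{equation*}
\|\nabla(u_n^\e-u_n^0)\|_{L^2(B_{R\e}(x_0)\setminus\overline{\Sigma_\e})}^2 = \e^{N+2h-2}\|\nabla V_\e\|_{L^2(B_R\setminus\overline\Sigma)}^2,
\end{equation*}
while the $L^2$ contribution on the same set scales as $\e^{N+2h}$ and is therefore negligible. Letting $\e\to 0^+$ and then $R\to+\infty$, using the decay of $\widetilde V$ at infinity guaranteed by finite energy in dimension $N\geq 3$, this local piece converges to $\tr$. The remaining step, and the main technical obstacle, is to show that the contribution on $\Omega\setminus(B_{R\e}(x_0)\cup\overline{\Sigma_\e})$ is $o(\e^{N+2h-2})$ uniformly for large $R$: I would handle this by combining elliptic regularity away from $x_0$ (so that $u_n^\e-u_n^0$ and its gradient decay pointwise at appropriate rates dictated by the Taylor expansion of $u_n^0$) with the sharp eigenvalue expansion from Theorem \ref{theor_asympotic_eigen_precise_u_0_dim_big} and the $L^2$-normalization and sign conditions in \eqref{hp_u_ne}, which force the mass of $u_n^\e-u_n^0$ to concentrate near $x_0$. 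Pasting the two pieces together yields \eqref{eqn-2-rate-ef}.
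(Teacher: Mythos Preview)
Your approach has a genuine circularity that the paper avoids. You define $V_\e:=\Phi_\e^0-\Phi_\e=\e^{-h}(u_n^0(x_0+\e\cdot)-u_n^\e(x_0+\e\cdot))$ and want uniform $H^1(B_R\setminus\overline\Sigma)$ bounds on it via the Hardy-type argument. But that argument (Lemma~\ref{Hardy}) controls the norm on $B_R\setminus\overline\Sigma$ by the norm on $B_{r_0/\e}\setminus\overline\Sigma$, which after rescaling is $\e^{-(N+2h-2)}\|u_n^\e-u_n^0\|^2_{H^1(\Omega_\e)}$. So you need $\|u_n^\e-u_n^0\|^2_{H^1(\Omega_\e)}=O(\e^{N+2h-2})$ before you can even start---and this is essentially \eqref{eqn-2-rate-ef}, the statement you are proving. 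Theorem~\ref{theorem_blow_up} does not help here: it concerns the rescaled \emph{torsion} function $\widetilde V_\e(x)=\e^{-h}V_\e(\e x)$ of Proposition~\ref{propo_tau_domain}, whose a~priori bound comes from its variational definition (Proposition~\ref{prop_estimates_torsion}), not from anything about eigenfunctions. Relatedly, your claim that the bulk term $\e^{2-h}[\lambda_n^\e u_n^\e-\lambda_n^0 u_n^0]$ is $O(\e^2)$ in sup norm is not justified: when $h\geq 2$ the prefactor $\e^{2-h}$ blows up, and you would need pointwise control of $u_n^\e-u_n^0$ near $x_0$ to compensate---again circular.

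The paper breaks this circle by inserting the torsion function as a go-between. Theorem~\ref{theo_eigen_var_abs} (via the abstract spectral perturbation result of Appendix~\ref{sec_appendix}) gives directly
\[
\|u_n^\e-(u_n^0-V_\e)\|^2_{H^1(\Omega_\e)}=o(\mc T_\e)+O(\|u_n^0\|_{L^2(\Sigma_\e)}^4),
\]
so $\varphi_\e:=\Phi_\e-\Phi_\e^0+\widetilde V_\e$ satisfies $\|\varphi_\e\|_{H^1(B_R\setminus\overline\Sigma)}\to 0$ after applying Hardy. Since Theorem~\ref{theorem_blow_up} already gives $\widetilde V_\e\to\widetilde V$, \eqref{eqn-1-rate-ef} follows. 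For \eqref{eqn-2-rate-ef}, the companion estimate \eqref{eq_eigenfun_abs} plus $\e^{-(N+2h-2)}\mc T_\e\to\tr$ finishes the job in one line; there is no need to split $\Omega_\e$ and control an exterior piece. If you want to rescue your direct approach, the missing ingredient is precisely \eqref{eq_eigenfun_abs} or \eqref{eq_eigenfun_abs_with_Ve}; once you have either, your outline for \eqref{eqn-1-rate-ef} goes through, but your strategy for \eqref{eqn-2-rate-ef} becomes unnecessary.
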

In dimension $N =2$, we have the following rougher result.
\begin{theorem}\label{theor_asympotic_eigenfunction_precise_u_0_dim_2}
  Let $N=2$. If 
$u_n^0$ and $u_n^\e$ are as in \eqref{hp_u_n0} and
 \eqref{hp_u_ne} respectively, and $h$ is defined in
 \eqref{def_h}, then  
\begin{equation*}
\|u_n^\e-u_n^0\|^2_{H^1(\Omega \setminus {\overline{\Sigma_\e}})}=O(\e^{2h-\delta}) \quad \text{as } \e \to 0^+, 
\end{equation*}
for any $\delta>0$.
\end{theorem}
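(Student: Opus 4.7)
The plan is to follow the overall strategy used for the higher-dimensional counterpart in Theorem \ref{theor_asympotic_eigenfunction_precise_u_0_dim_big}, combining the abstract spectral stability identity from Appendix \ref{sec_appendix} with the eigenvalue bound of Theorem \ref{theor_asympotic_eigen_precise_u_0_dim2}. The blow-up / Hardy-inequality argument that in $N\geq 3$ identifies the precise limit $\widetilde V$ and the constant $\tr$ must be replaced in $N=2$ by a direct energy estimate that produces an unavoidable logarithmic factor, packaged as an $\e^{-\delta}$ loss.

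First I would invoke the abstract lemma from Appendix \ref{sec_appendix}, applied to the sesquilinear forms associated with problems \eqref{prob_robin} and \eqref{prob_robin_perturbed}, to derive an inequality of the form
\begin{equation*}
\|u_n^\e - u_n^0\|^2_{H^1(\Omega\setminus\overline{\Sigma_\e})}
\leq C\bigl(|\lambda_n^\e - \lambda_n^0| + \mc{J}_\e\bigr),
\end{equation*}
where $\mc{J}_\e$ is the Dirichlet-type energy of an admissible corrector: a function on $\Omega\setminus\overline{\Sigma_\e}$ whose Robin boundary data on $\partial\Sigma_\e$ matches the trace of $u_n^0$ (modulo lower-order errors) and whose Dirichlet data away from $x_0$ is zero. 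Such a reduction is standard for simple eigenvalues and mirrors the step used in the $N\geq 3$ case to identify $\tr$ as the dominant term. The eigenvalue contribution is then $O(\e^{2k-\delta})=O(\e^{2h-\delta})$ by Theorem \ref{theor_asympotic_eigen_precise_u_0_dim2} when $k\geq 1$; when $k=0$ and $h=1$, Theorem \ref{theor_asympotic_eigen_precise_u_not_0} yields the even smaller $O(\e)=O(\e^{2-\delta})$ for any $\delta\in(0,1)$.

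Next I would bound $\mc{J}_\e$ by constructing an explicit competitor. Choose a cutoff $\eta_\e$ that equals $1$ in a neighborhood of $\overline{\Sigma_\e}$ and vanishes outside $B_{\e^\gamma}(x_0)$ for an exponent $\gamma\in(0,1)$ to be optimized, and multiply it against a suitable planar harmonic profile realizing the Neumann data $\partial_\nu P_h^{u_n^0,x_0}+\alpha\, u_n^0(x_0)$ on $\partial\Sigma$, rescaled to $\Sigma_\e$. A direct energy computation on the annulus $B_{\e^\gamma}(x_0)\setminus B_\e(x_0)$, combined with the Taylor expansion of $u_n^0$ at $x_0$, gives $\mc{J}_\e = O\bigl(\e^{2h}(1+|\log\e|)\bigr)$; optimizing $\gamma$ and absorbing the logarithmic factor into $\e^{-\delta}$ closes the estimate.

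The main obstacle is exactly the one already present in Theorem \ref{theor_asympotic_eigen_precise_u_0_dim2}: in $\R^2$ a finite-energy corrector at infinity does not exist, because harmonic functions with a prescribed non-trivial boundary flux on $\partial\Sigma$ grow logarithmically at infinity, which prevents passing to a blow-up limit. The truncation-plus-logarithmic-estimate strategy is dictated by this obstruction, and the unavoidable logarithmic factor is exactly what forces the statement to hold for every $\delta>0$ rather than with $\delta=0$.
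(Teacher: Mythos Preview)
Your proposal would likely work, but it is considerably more involved than the paper's argument, and the inequality you attribute to Appendix~\ref{sec_appendix} --- a bound on $\|u_n^\e-u_n^0\|_{H^1}^2$ in terms of $|\lambda_n^\e-\lambda_n^0|$ plus a corrector energy --- is not what that appendix actually delivers. The paper's proof of Theorem~\ref{theor_asympotic_eigenfunction_precise_u_0_dim_2} is essentially a two-line application of results already established in Section~\ref{sec_eign_var}.

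Concretely, Theorem~\ref{theo_eigen_var_abs} (itself a consequence of the abstract Theorem~\ref{theorem_expansion_very_abs}) already provides the identity \eqref{eq_eigenfun_abs}:
\[
\|u_n^\e - u_n^0\|^2_{H^1(\Omega_\e)} = \mc{T}_\e + o(\mc{T}_\e) + O\bigl(\|u_n^0\|^4_{L^2(\Sigma_\e)}\bigr) + O\bigl(\mc{T}_\e^{1/2}\|u_n^0\|^2_{L^2(\Sigma_\e)}\bigr),
\]
in which the eigenvalue variation does not appear; the only quantities to control are the torsional rigidity $\mc{T}_\e$ and $\|u_n^0\|_{L^2(\Sigma_\e)}$. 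The bound $\mc{T}_\e = O(\e^{2h-\delta})$ for $N=2$ is Proposition~\ref{prop_estimates_torsion}, whose proof goes via the variational characterization \eqref{eq_chara_torsion} together with the trace inequality \eqref{ineq_traces_2} --- no explicit competitor is ever built. A direct Taylor expansion gives $\|u_n^0\|^4_{L^2(\Sigma_\e)} = o(\e^{2h})$ in both cases $u_n^0(0)=0$ and $u_n^0(0)\neq 0$. Substituting these into \eqref{eq_eigenfun_abs} finishes the proof.

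So neither the eigenvalue bound of Theorem~\ref{theor_asympotic_eigen_precise_u_0_dim2} nor any cutoff--annulus construction is needed here; the $\e^{-\delta}$ loss enters once, through the two-dimensional trace inequality of Proposition~\ref{prop_traces}, and propagates automatically. Your observation about the non-existence of a finite-energy blow-up profile in the plane is correct and is precisely why the paper settles for an $O$-estimate rather than a sharp asymptotic, but it plays no active role in this particular proof.
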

In any dimension $N\geq2$, more explicit expansions of  the eigenvalue
variation can be derived for round holes,
i.e. for
\begin{equation}\label{eq:round-holes}
\Sigma= B_{r_1},
\end{equation}
for some $0<r_1<r_0$.

\begin{theorem}\label{theor_asympotic_eigen_precise_round_u_0_dim_big}
  Let $N\geq 3$ and $\Sigma$ be as in \eqref{eq:round-holes}.
  If $u_n^0(x_0)=0$ and $k\geq1$ is the vanishing order
  of $u_n^0$ at $x_0$, then
\begin{equation*}
  \la_n^\e -\la_n^0 = -\frac{k(N+2k-2)}{N+k-2}r_1^{N+2k-2}
  \left(\int_{\partial B_1} |P_k^{u_n^0,x_0}|^2\ds\right) \,
  \e^{N+2k-2}+o(\e^{N+2k-2})
  \quad \text{as } \e \to 0^+, 
\end{equation*}
where $P_k^{u_n^0,x_0}$ is defined in \eqref{def_P_ui}.
\end{theorem}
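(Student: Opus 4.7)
The plan is to apply Theorem \ref{theor_asympotic_eigen_precise_u_0_dim_big} with $\Sigma=B_{r_1}$ and to evaluate the coefficient $\tr+\int_{\Sigma}|\nabla P_k^{u_n^0,x_0}|^2\dx$ explicitly by exploiting the spherical symmetry. The crucial preliminary observation is that $P:=P_k^{u_n^0,x_0}$ is a homogeneous harmonic polynomial of degree $k$. Indeed, since $u_n^0$ vanishes of order $k$ at $x_0$ and satisfies $-\Delta u_n^0=\la_n^0 u_n^0$, identifying the homogeneous components in the Taylor expansion of this equation at $x_0$ shows that the degree-$(k-2)$ component of $\Delta u_n^0$, which is exactly $\Delta P$, must coincide with the corresponding component of $-\la_n^0 u_n^0$, which vanishes; hence $\Delta P\equiv 0$.

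Exploiting this harmonicity, I would write $\widetilde V$ in closed form via Kelvin inversion: since $x\mapsto P(x)/|x|^{N+2k-2}$ is harmonic on $\R^N\setminus\{0\}$ and decays at infinity, the ansatz $\widetilde V(x)=c\,P(x)/|x|^{N+2k-2}$ is the unique finite-energy candidate for \eqref{srtel-rp}. Because $u_n^0(x_0)=0$, the boundary datum in \eqref{srtel-rp} reduces to $\partial_\nu P$ on $\partial B_{r_1}$; using that the relevant unit normal on $\partial(\R^N\setminus\overline{B_{r_1}})$ is $\nu=-x/|x|$ and Euler's identity $x\cdot\nabla P=kP$, a direct computation gives
\begin{equation*}
\partial_\nu P\big|_{\partial B_{r_1}}=-\tfrac{k}{r_1}P,\qquad \partial_\nu \widetilde V\big|_{\partial B_{r_1}}=\tfrac{c(N+k-2)}{r_1^{N+2k-1}}P.
\end{equation*}
Matching the two forces $c=-k\,r_1^{N+2k-2}/(N+k-2)$, and in particular $\widetilde V|_{\partial B_{r_1}}=-\tfrac{k}{N+k-2}P$.

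Finally I would evaluate the two contributions. The boundary formula in \eqref{def_torsion_global}, together with the scaling identity $\int_{\partial B_{r_1}}P^2\ds=r_1^{N+2k-1}\int_{\partial B_1}|P_k^{u_n^0,x_0}|^2\ds$ coming from the homogeneity of $P$, gives
\begin{equation*}
\tr=\int_{\partial B_{r_1}}(\partial_\nu P)\,\widetilde V\ds=\tfrac{k^2\,r_1^{N+2k-2}}{N+k-2}\int_{\partial B_1}|P_k^{u_n^0,x_0}|^2\ds,
\end{equation*}
while integration by parts on $B_{r_1}$, combined with $\Delta P=0$ and Euler's identity, yields
\begin{equation*}
\int_{B_{r_1}}|\nabla P|^2\dx=\tfrac{k}{r_1}\int_{\partial B_{r_1}}P^2\ds=k\,r_1^{N+2k-2}\int_{\partial B_1}|P_k^{u_n^0,x_0}|^2\ds.
\end{equation*}
Adding these and simplifying via $\tfrac{k^2}{N+k-2}+k=\tfrac{k(N+2k-2)}{N+k-2}$ produces the claimed coefficient, and Theorem \ref{theor_asympotic_eigen_precise_u_0_dim_big} then delivers the stated asymptotic. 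The only conceptual ingredient is the harmonicity of $P_k^{u_n^0,x_0}$; everything else is explicit calculus on a ball, so I do not anticipate any real obstacle once Kelvin's transform is used to diagonalize the exterior problem.
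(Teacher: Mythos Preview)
Your proposal is correct and follows essentially the same route as the paper: apply Theorem~\ref{theor_asympotic_eigen_precise_u_0_dim_big}, compute $\int_{B_{r_1}}|\nabla P_k^{u_n^0,x_0}|^2\dx$ via the divergence theorem and Euler's identity, and evaluate $\tr$ explicitly for the spherical hole. The only difference is that the paper outsources the computation of $\tr$ to \cite[Lemma~6.1]{FLO_Neumann} (stated here as Proposition~\ref{prop_round_hole_dim_big}), whereas you supply that calculation directly by writing down the explicit solution $\widetilde V(x)=-\tfrac{k\,r_1^{N+2k-2}}{N+k-2}\,P(x)/|x|^{N+2k-2}$ via Kelvin inversion; this is exactly the computation underlying the cited lemma, and your normal conventions, matching of the Neumann datum, and final algebra are all correct.
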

In dimension $2$, using  the arguments developed in \cite[Section
6]{FLO_Neumann}, we obtain the following expansion for round holes.
\begin{theorem}\label{theor_asympotic_eigen_precise_round_u_0_dim_2}
  Let $N=2$ and $\Sigma$ be as in \eqref{eq:round-holes}.
  If $u_n^0(x_0)=0$ and $k\geq1$ is the vanishing order
  of $u_n^0$ at $x_0$, then
  \begin{equation*}
   \la_n^\e-\la_n^0=-2k \pi r_1^{2k} \Bigg(\left|
     \pd{^ku_n^0}{x^k_1}(x_0)\right|^2+
   \frac{1}{k^2}\left|\pd{^ku_n^0}{x^{k-1}_1\partial
       x_2}(x_0)\right|^2 \Bigg) \e^{2k}+o( \e^{2k})
   \quad \text{as } \e \to 0^+.
\end{equation*}  
\end{theorem}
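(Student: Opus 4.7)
The plan is to follow the strategy of \cite{FLO_Neumann}, adapting the Neumann arguments of their Section~6 to the present Robin setting. Since Theorem~\ref{theor_asympotic_eigen_precise_u_0_dim2} already provides the rough bound $\lambda_n^\e-\lambda_n^0=O(\e^{2k-\delta})$ for every $\delta>0$, the work is to identify the precise coefficient in front of $\e^{2k}$, exploiting the rotational symmetry of $\Sigma=B_{r_1}$ to decouple the computation into angular Fourier modes centered at $x_0$.

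First I would introduce polar coordinates $(r,\theta)$ centered at $x_0$ and expand both $u_n^0$ and $u_n^\e$ in Fourier series in $\theta$, whose radial coefficients satisfy Bessel-type ODEs driven respectively by $\lambda_n^0$ and $\lambda_n^\e$. Since $u_n^0$ vanishes of order $k$ at $x_0$, all Fourier modes of index $j<k$ vanish, and the leading mode is the $k$-th one, with angular profile $A\cos(k\theta)+B\sin(k\theta)$. Here the coefficients $A$ and $B$ can be read off by Taylor matching: because $\Delta u_n^0=-\lambda_n^0 u_n^0$ vanishes of order $k$ at $x_0$, the polynomial $P_k:=P_k^{u_n^0,x_0}$ is a harmonic homogeneous polynomial of degree $k$, and in two dimensions it admits the polar representation $r^k(A\cos k\theta+B\sin k\theta)$, where $A$ and $B$ are linked to $\frac{\partial^k u_n^0}{\partial x_1^k}(x_0)$ and $\frac{\partial^k u_n^0}{\partial x_1^{k-1}\partial x_2}(x_0)$ by evaluating $P_k$ on the $x_1$-axis and differentiating once in the $x_2$-direction. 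Rescaling the perturbed eigenfunction at $x_0$ by $\e$, the Robin condition on the rescaled circle of radius $r_1$ couples, on each Fourier mode, Bessel function values with their derivatives, and one then extracts the leading behavior using the classical asymptotics of $J_j$ and $Y_j$ near the origin.

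The main obstacle is the failure, in dimension two, of the Hardy-type inequality on $\mathbb{R}^2\setminus\overline{\Sigma}$ underlying the blow-up analysis of Theorem~\ref{theorem_blow_up}; consequently the abstract identification of the leading coefficient via the exterior problem \eqref{srtel-rp} is unavailable, and one is forced into an explicit mode-by-mode computation using the Fourier–Bessel expansion. The roundness of the hole is precisely what makes this computation feasible in closed form. A further delicate point specific to the Robin case is to show that the additional term $\alpha u_n^\e$ in the boundary condition contributes only at subleading order on the dominant $k$-th angular mode, so that the leading coefficient ultimately depends only on $k$, $r_1$ and on the two partial derivatives appearing in the statement, combined in the specific way $|\partial_{x_1}^k u_n^0(x_0)|^2+k^{-2}|\partial_{x_1}^{k-1}\partial_{x_2}u_n^0(x_0)|^2$ that emerges from the Bessel asymptotics and the explicit normalization of $A$ and $B$.
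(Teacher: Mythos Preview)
Your plan has a genuine gap: the Fourier decomposition in $\theta$ decouples the angular modes only when the entire problem is rotationally invariant about $x_0$, but $\partial\Omega$ is not a circle centered at $x_0$, and the Robin condition there couples all modes together. You propose to analyze $u_n^\e$ directly through its radial Bessel coefficients, yet these coefficients are not determined by the Robin condition on the inner circle $\e\,\partial B_{r_1}$ alone; they also depend on what happens near $\partial\Omega$. You give no mechanism---matched asymptotic expansions, a Green's function representation, or a comparison argument---to show that the contribution of the outer boundary to the $k$-th mode is $o(\e^{2k})$. Without this, the ``mode-by-mode computation'' cannot be closed. (Your citation of \cite{FLO_Neumann}, Section~6, does not rescue this: that reference does \emph{not} expand the perturbed eigenfunction in Fourier--Bessel series, so the strategy you describe is not the one you are invoking.)

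The paper avoids this issue by never analyzing $u_n^\e$ directly. Theorem~\ref{theo_eigen_var_abs} reduces $\lambda_n^\e-\lambda_n^0$ to the torsional rigidity $\mc{T}_\e$ plus explicit integrals of $u_n^0$ over $\Sigma_\e$ and $\partial\Sigma_\e$ (handled by Proposition~\ref{prop_asympotic_sigma_and_boundary_sigma}). The asymptotics of $\mc{T}_\e$ are then obtained not via Bessel functions but by writing down an explicit \emph{harmonic} comparison function $W_{\e,R}$ on an annulus $B_R\setminus\e\,\overline{B_{r_1}}$, computing its torsion $\mc{T}_\e^R$ in closed form (Proposition~\ref{prop_asymptotics_torsion_neumann}), and showing through two perturbation steps (Propositions~\ref{prop_torsion_ball_with_P} and~\ref{prop_torsion_ball_with_u}) that replacing $-\Delta$ by $-\Delta+c_\alpha$ and the datum $P_k^{u_n^0,0}$ by $u_n^0$ changes the torsion only by $o(\e^{2k})$. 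The passage from the ball $B_R$ to the actual domain $\Omega$ is a sandwich: the variational characterization \eqref{eq_chara_torsion} together with Lemma~\ref{lemma_domains} gives $\mc{T}_{\e,R_2}\le\mc{T}_\e\le\mc{T}_{\e,R_1}$ for $\overline{B_{R_1}}\subset\Omega$ and $\overline{\Omega}\subset B_{R_2}$, and since the leading term of $\mc{T}_{\e,R}$ does not depend on $R$, the result follows (Proposition~\ref{prop_round_hole_dim_2}). This sandwich via the sup-characterization of $\mc{T}_\e$ is exactly the missing ingredient in your outline.
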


\begin{remark}
In this paper, we have assumed  $\alpha$ to be constant for clarity of
exposition.
However, our results can be generalized to the case where
$\alpha$ is a function over $\overline{\Omega}$.
More precisely, let $\alpha \in C^0(\overline{\Omega}\setminus \{x_0\})$
  be such that
\begin{equation*}
  \alpha_0:=\lim_{x \to x_0}\frac{\alpha(x)}{|x-x_0|^{\sigma}}
  \text{ exists, is finite and   }\alpha_0 \neq 0,
\end{equation*}
for some $\sigma \in (-1, +\infty)$.
For example, we expect that, if $u^n_0(x_0) \neq 0$, by following the proof of
Theorem~\ref{theor_asympotic_eigen_precise_u_not_0},
one could prove the expansion
\begin{equation*}
  \la_n^\e -\la_n^0 = \alpha_0 \mc{H}^{N-1}(\partial \Sigma )\, |u_n^0(x_0)|^2
  \e^{N+\sigma-1}+ o(\e^{N-1})
  \quad \text{as } \e \to 0^+;
\end{equation*}
similarly,  all the other results presented in
  Section~\ref{sec_main_results} are expected to be generalizable accordingly.
\end{remark}

\section{Preliminaries}\label{sec_prelimi}
\subsection{Functional setting}\label{subsection_func_sett}
For any open set $E\subset \R^N$, we denote with $L^2(E)$ and $H^1(E)$
the usual Lebesgue and Sobolev spaces, endowed with the scalar
products $(u,v)_{L^2(E)}:=\int_{E} uv \dx$ and
$(u,v)_{H^1(E)}:=\int_{E} \big(\nabla u \cdot \nabla v +uv\big)\dx$,
respectively.

The following proposition  provides a uniform extension result, due to
\cite{SW_extension}. For completeness, we provide a proof in
  Appendix \ref{sec_appendix-extension}.
\begin{proposition}\label{prop_extension}
  For $N \ge 2$, let $\Omega \subset \R^N$ and $\Sigma \subset \R^N$
  be open, bounded, and Lipschitz sets such that $\Omega$ is connected and
  \eqref{def_Sigma} is satisfied for some $x_0\in\Omega$ and
  $r_0>0$. For every $\e\in (0,1]$, let $\Sigma_\e$ be defined in
  \eqref{def_Sigma_e}.  Then, for every $\e \in (0,1]$ there exists an
  (inner) extension operator
\begin{equation*}
\Ee:H^1(\Omega \setminus \overline{\Sigma}_\e) \to H^1(\Omega) 
\end{equation*}
such that, for every
$u \in H^1(\Omega \setminus \overline{\Sigma}_\e)$ and $\e \in (0,1]$,
\begin{equation}\label{eq_Ee}
(\Ee u)\big|_{{\Omega \setminus \overline{\Sigma}_\e}}=u
\end{equation}
 and
\begin{equation}\label{ineq_extension}
\norm{\Ee u}_{H^1(\Omega)}^2\le K \|u\|_{H^1(\Omega \setminus \overline{\Sigma}_\e)}^2,
\end{equation}
for some constant $K>0$ which is independent of $\e$.
\end{proposition}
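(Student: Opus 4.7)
The strategy is to build the extension inside the hole by rescaling to a fixed reference problem on the annular region $A := B_{r_0}\setminus\overline{\Sigma}$, which by \eqref{def_Sigma} is a bounded, connected, Lipschitz domain. The rescaling converts the shrinking geometry of $\Sigma_\e$ into an $\e$-independent extension task on $A$, and a Poincar\'e--Stein decomposition reconciles the construction with the anisotropic $\e$-scaling of the $H^1$ norm.

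Given $u \in H^1(\Omega\setminus\overline{\Sigma_\e})$, I would work with the rescaled function $v(y):=u(x_0+\e y)$ on $B_{r_0/\e}\setminus\overline{\Sigma}$, which contains $A$ for all $\e\in(0,1]$. On $A$, I would split $v=\bar v+\tilde v$ with $\bar v := |A|^{-1}\int_A v\,dy$ and $\tilde v:=v-\bar v$, apply a fixed Stein extension operator $E_\sharp\colon H^1(A)\to H^1(\R^N)$ (which exists since $A$ is bounded Lipschitz) to the fluctuation, and set
\[
V(y) := \bar v + (E_\sharp\tilde v)(y)\quad \text{on }\overline{\Sigma},\qquad
(\Ee u)(x) := \begin{cases} u(x), & x\in \Omega\setminus\overline{\Sigma_\e},\\ V\big((x-x_0)/\e\big), & x\in\overline{\Sigma_\e}.\end{cases}
\]
The trace identity $V|_{\partial\Sigma}=\bar v+\tilde v|_{\partial\Sigma}=v|_{\partial\Sigma}$ matches the trace of $u(x_0+\e\,\cdot)$ after rescaling, so $\Ee u\in H^1(\Omega)$ and \eqref{eq_Ee} is automatic.

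For \eqref{ineq_extension}, the Poincar\'e inequality on the connected domain $A$ gives $\|\tilde v\|_{L^2(A)}\le C\|\nabla v\|_{L^2(A)}$, which together with the boundedness of $E_\sharp$ yields the pure-gradient bound $\|\nabla E_\sharp\tilde v\|_{L^2(\Sigma)}\le C\|\nabla v\|_{L^2(A)}$ and the mixed $L^2$ bound $\|V\|_{L^2(\Sigma)}^2\le C(\|v\|_{L^2(A)}^2+\|\nabla v\|_{L^2(A)}^2)$. Pulling these back through the scaling identities $\int_{\Sigma_\e}|V((x-x_0)/\e)|^2\,dx=\e^N\|V\|_{L^2(\Sigma)}^2$ and $\int_{\Sigma_\e}|\nabla_x V((x-x_0)/\e)|^2\,dx=\e^{N-2}\|\nabla V\|_{L^2(\Sigma)}^2$, together with $\e^N\|v\|_{L^2(A)}^2=\|u\|_{L^2(B_{\e r_0}(x_0)\setminus\overline{\Sigma_\e})}^2$ and $\e^{N-2}\|\nabla v\|_{L^2(A)}^2=\|\nabla u\|_{L^2(B_{\e r_0}(x_0)\setminus\overline{\Sigma_\e})}^2$, and observing that the stray cross-term $\e^N\|\nabla v\|_{L^2(A)}^2=\e^2\|\nabla u\|^2_{L^2(B_{\e r_0}(x_0)\setminus\overline{\Sigma_\e})}$ is dominated by $\|\nabla u\|_{L^2}^2$ since $\e\le1$, I would obtain \eqref{ineq_extension} with a constant $K$ depending only on $\Omega$, $\Sigma$, $x_0$, $r_0$.

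The hard part is respecting the anisotropic scaling of the $H^1$ norm: the $L^2$ part scales like $\e^N$ while the Dirichlet part scales like $\e^{N-2}$, so a direct application of a Stein-type extension---which naturally bounds $\|\nabla Ev\|_{L^2(\Sigma)}$ by the full norm $\|v\|_{H^1(A)}$---would inflate the rescaled gradient estimate by an $\e^{-2}$-divergent factor inherited from the $L^2$ term. Subtracting the mean $\bar v$ and invoking Poincar\'e on $\tilde v$ is precisely what converts the offending $L^2$ norm into a gradient norm, and this conversion is only valid on a connected domain; this is exactly why \eqref{def_Sigma} insists that $B_{r_0}\setminus\overline{\Sigma}$ be connected, the subtle hypothesis the authors flag as central in Appendix \ref{sec_appendix-extension}.
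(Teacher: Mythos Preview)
Your proof is correct and follows the same overall strategy as the paper: rescale to the fixed reference annulus $A=B_{r_0}\setminus\overline{\Sigma}$, build there an extension into $\Sigma$ that satisfies a \emph{pure-gradient} bound $\|\nabla(\text{ext.})\|_{L^2(\Sigma)}\le C\|\nabla v\|_{L^2(A)}$, and rescale back, the gradient bound being exactly what neutralizes the $\e^{N-2}$ scaling of the Dirichlet energy. You also correctly identify that connectedness of $A$ (via Poincar\'e) is the linchpin.

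The only difference is in how the gradient-preserving extension on $A$ is obtained. The paper (Lemma~\ref{l:extension_1}) extends by the harmonic function in $\Sigma$ matching the trace on $\partial\Sigma$, and then proves the gradient bound \eqref{ineq_extension_gradients} by a contradiction argument that ultimately invokes the Poincar\'e--Wirtinger inequality. Your route---subtract the mean $\bar v$, apply a fixed Stein extension to the fluctuation $\tilde v$, and use Poincar\'e directly to bound $\|\tilde v\|_{H^1(A)}$ by $\|\nabla v\|_{L^2(A)}$---is a more constructive and arguably more transparent way of reaching the same conclusion, avoiding the contradiction step. The paper's harmonic extension has the minor aesthetic advantage that it is the energy-minimizing extension (so in some sense optimal) and uses nothing beyond the trace theory already in play, whereas your argument imports a Stein operator but gives the gradient bound in one line.
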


For clarity of exposition, in the following we will assume $x_0$ to be
the origin.  This is not a restrictive choice, as it can be easily verified
by translation. For every $\e\in (0,1]$ we define
\begin{equation}\label{eq:Omega-eps}
\Omega_\e := \Omega \setminus \overline{\Sigma}_\e.
\end{equation}

To study the variation of the eigenvalues as $\e\to 0^+$, it is important
to understand how the constants of the trace operators on the hole's
boundary depend on the parameter $\e$.
\begin{proposition}\label{prop_traces}
  Let $\Sigma_\e$ and $\Omega_\e$ be as in \eqref{def_Sigma_e} (with
  $x_0=0$) and \eqref{eq:Omega-eps}, respectively.
\begin{enumerate}[\rm (i)]
\item If $N \ge 3$, there exists a constant $C>0$, which does not
  depend on $\e$, such that, for every $\e \in (0,1]$ and
  $u \in H^1(\Omega_\e)$,
\begin{equation}\label{ineq_traces_N}
 \int_{ \partial \Si_\e} u^2 \ds \le C\e \int_{\Omega_\e}(|\nabla u|^2 +u^2) \dx.
\end{equation}  
\item If $N=2$, for every $\delta\in (0,1)$ there exists a constant
  $C_\delta>0$, depending on $\delta$ but independent of $\e$, such
  that, for every $\e \in (0,1]$ and $u \in H^1(\Omega_\e)$,
\begin{equation}\label{ineq_traces_2}
\int_{ \partial \Sigma_\e} u^2 \ds \le C_\delta \e^{1-\delta} \int_{\Omega_\e}(|\nabla u|^2 +u^2) \dx.
\end{equation}
\end{enumerate}
\end{proposition}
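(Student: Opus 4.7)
The plan is to express the boundary integral on $\partial\Si_\e$ as the divergence of $u^2 X_\e$ for a suitably scaled vector field, and then to estimate the resulting volume integrals by combining the $\e$-uniform extension $\Ee$ of Proposition~\ref{prop_extension} with Sobolev embedding on the fixed Lipschitz domain $\Omega$.

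Concretely, I would first fix a $C^1$ vector field $Y:\R^N\to\R^N$ compactly supported in an open set $U$ with $\overline{\Si}\subset U\subset B_{r_0}$ and satisfying $Y\cdot\nu_\Si=-1$ on $\partial\Si$, where $\nu_\Si$ is the outer unit normal to $\Si$. Setting $X_\e(x):=Y(x/\e)$, one has $\mathrm{supp}(X_\e)\subset\e U\subset B_{r_0}\subset\Omega$ (so that $X_\e$ vanishes near $\partial\Omega$), and on $\partial\Si_\e$ the outer normal to $\Omega_\e$ is $-\nu_\Si(x/\e)$, whence $X_\e\cdot\mathbf{n}_{\Omega_\e}=1$. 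Moreover $\|X_\e\|_{L^\infty}\le\|Y\|_{L^\infty}$ and, crucially, $\|\dive X_\e\|_{L^\infty}\le\e^{-1}\|\dive Y\|_{L^\infty}$, with constants independent of $\e$. Applying the divergence theorem to $u^2 X_\e$ on $\Omega_\e$ and then Cauchy--Schwarz yields
\[
\int_{\partial\Si_\e} u^2\ds = \int_{\Omega_\e}\bigl(2u\,\nabla u\cdot X_\e + u^2\dive X_\e\bigr)\dx \le C_1\|u\|_{L^2(\e U\cap\Omega_\e)}\|\nabla u\|_{L^2(\Omega_\e)} + C_2\,\e^{-1}\|u\|_{L^2(\e U\cap\Omega_\e)}^2,
\]
with $C_1,C_2$ independent of $\e$; the task now reduces to bounding $\|u\|_{L^2(\e U\cap\Omega_\e)}$.

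By Proposition~\ref{prop_extension}, $\Ee u\in H^1(\Omega)$ agrees with $u$ on $\Omega_\e$ and satisfies $\|\Ee u\|_{H^1(\Omega)}\le C\|u\|_{H^1(\Omega_\e)}$. For $N\ge 3$, the Sobolev embedding $H^1(\Omega)\hookrightarrow L^{2^*}(\Omega)$ with $2^*=2N/(N-2)$ combined with H\"older on $\e U$ (of measure $O(\e^N)$) gives
\[
\|u\|_{L^2(\e U\cap\Omega_\e)}\le |\e U|^{1/N}\|\Ee u\|_{L^{2^*}(\Omega)}\le C\,\e\,\|u\|_{H^1(\Omega_\e)},
\]
and substituting into the previous estimate yields \eqref{ineq_traces_N}: the apparent $\e^{-1}$ blow-up from $\dive X_\e$ is precisely absorbed by the $\e^2$ coming from $\|u\|_{L^2(\e U\cap\Omega_\e)}^2$.

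In dimension $N=2$ the Sobolev embedding only provides $H^1(\Omega)\hookrightarrow L^p(\Omega)$ for every $p<\infty$, with constant $C_p$. Given $\delta\in(0,1)$, I would choose $p=4/\delta$, so that H\"older on $\e U$ (of measure $O(\e^2)$) yields
\[
\|u\|_{L^2(\e U\cap\Omega_\e)}\le C_\delta\,\e^{1-\delta/2}\,\|u\|_{H^1(\Omega_\e)};
\]
inserting into the divergence identity and noting that $\e^{1-\delta/2}\le\e^{1-\delta}$ for $\e\le 1$ produces \eqref{ineq_traces_2}. The main obstacle is exactly this two-dimensional case: the $\e^{-1}$ from $\dive X_\e$ forces the $\|u\|_{L^2(\e U\cap\Omega_\e)}^2$ term to be of order $\e^2$, which is just out of reach in $N=2$ by Sobolev (which gives only $\e^{2-\delta}$), thereby explaining the arbitrarily small loss $\e^{-\delta}$ in \eqref{ineq_traces_2} compared to the clean $\e$ rate of \eqref{ineq_traces_N}.
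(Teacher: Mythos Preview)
Your argument is correct and reaches the result, but by a genuinely different route from the paper. The paper rescales to the fixed hole: setting $u_\e(x):=(\Ee u)(\e x)\in H^1(\Si)$, it applies the classical trace inequality on the fixed Lipschitz domain $\Si$ to get $\int_{\partial\Si_\e}u^2\ds=\e^{N-1}\int_{\partial\Si}u_\e^2\ds\le C\e^{N-1}\|u_\e\|_{H^1(\Si)}^2$, then changes variables back to obtain $C\e^{-1}\int_{\Si_\e}(\e^2|\nabla(\Ee u)|^2+|\Ee u|^2)\dx$; from this point the H\"older--Sobolev bound on $\|\Ee u\|_{L^2(\Si_\e)}$ is exactly the one you use. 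Your divergence identity with the scaled field $X_\e$ is a clean alternative that bypasses the rescaled trace theorem altogether and makes the mechanism behind the $\e$-gain (and the $\e^{-\delta}$ loss in $N=2$) very transparent; it would also adapt more readily to anisotropic or weighted situations. One small technical caveat: since $\Si$ is only Lipschitz, a $C^1$ field with $Y\cdot\nu_\Si=-1$ \emph{exactly} need not exist, but a smooth transversal field with $Y\cdot\nu_\Si\le -c<0$ a.e.\ always does, and this gives $c\int_{\partial\Si_\e}u^2\ds\le\int_{\Omega_\e}\dive(u^2X_\e)\dx$, which is all you need. The paper's approach is slightly more economical (no vector field to construct), while yours is more self-contained.
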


\begin{proof}
  Let $u \in H^1(\Omega_\e)$ and $u_\e(x):=(\Ee u)(\e x)$. Then
  $u_\e \in H^1(\Sigma)$. By the H\"older inequality, a change of
  variables, and classical Sobolev trace theory 
\begin{align*}
  \int_{ \partial \Si_\e} u^2 \ds &=\e^{N-1}\int_{ \partial \Si}
                                    u_\e^2 \ds
                                    \le C_1 \e^{N-1} \int_{\Si}(|\nabla u_\e|^2 +u_\e^2) \dx\\
                                  & = C_1 \e^{N-1}\int_{\Si}(\e^2|\nabla (\Ee u)(\e x)|^2 +|(\Ee u)(\e
                                    x)|^2) \dx\\
                                  &= C_1 \e^{-1}\int_{\Si_\e}(\e^2|\nabla (\Ee u)|^2 +|\Ee
                                    u|^2) \dx,
\end{align*}
for some positive constant $C_1>0$ independent of $\e$. Furthermore,
if $N\ge 3$, by the H\"older and Sobolev inequalities and
\eqref{ineq_extension}
\begin{equation*}
  \int_{\Sigma_\e} |\Ee u|^2 \dx \le |\Sigma|^{2/N}
  \e^{2}\left(\int_{\Omega} |\Ee u|^{\frac{2N}{N-2}}\dx\right)^{\!\!\frac{N-2}{N}}\\
\le C_2 \e ^{2} \|\Ee u\|_{H^1(\Omega)}^2 \le C_2 K \e ^{2}\|u\|_{H^1(\Omega_\e)}^2,
\end{equation*}
for some constant $C_2>0$ independent of $\e$. 
On the other hand, if $N=2$, for every  $\delta\in (0,1)$ we have 
\begin{equation*}
  \int_{\Sigma_\e} |\Ee u|^2 \dx \le |\Sigma|^{\frac{p-2}{p}}
  \e^{2-\delta}\left(\int_{\Omega} |\Ee u|^{p}\dx\right)^{\frac{2}{p}}\\
  \le C_3 \e ^{2-\delta} \|\Ee u\|_{H^1(\Omega)}^2\le C_3 K \e
  ^{2-\delta} \|u\|_{H^1(\Omega_\e)}^2,
\end{equation*}
where $p=\frac{4}{\delta}$ and $C_3>0$ is a positive constant
depending on $\delta$. Hence \eqref{ineq_traces_N} and
\eqref{ineq_traces_2} are proved.
\end{proof}

\begin{remark}\label{remark_traces}
  By the classical Sobolev trace inequality on the domain
  $\Omega \setminus \overline{B_{r_0}}$, there exists a constant
  $C_{r_0}>0$,  depending on $\Omega$ and $r_0$, such that
\begin{equation*}
  \left(\int_{\partial \Omega} v^2 \, \ds\right)^{\frac{1}{2}} \le
  C_{r_0}\norm{v}_{H^1(\Omega \setminus \overline{B_{r_0}})} \le C_{r_0} \norm{v}_{H^1(\Omega_\e)}
\end{equation*}
for every $\e\in(0,1]$ and $v \in H^1(\Omega_\e)$.
\end{remark}

For every  $\alpha\in\R$ and $c\ge 0$, we consider  the eigenvalue problem 
\begin{equation}\label{prob_robin_moved}
\begin{cases}
-\Delta u+c u= \mu u, & \text{ in } \Omega,\\
\pd{u}{\nu} + \alpha u=0, & \text{ on } \partial \Omega,
\end{cases}
\end{equation}
which is equivalent to \eqref{prob_robin} in the sense that it has the
same eigenfunctions and eigenvalues that are shifted by an amount $c$.
For every $\e \in (0,1]$, the corresponding perturbed problem is
\begin{equation}\label{prob_robin_perturbed_moved}
\begin{cases}
-\Delta u+cu= \mu u, & \text{ in } \Omega_\e,\\
\pd{u}{\nu} + \alpha u=0, & \text{ on } \partial \Omega,\\
\pd{u}{\nu} + \alpha u=0, & \text{ on } \partial \Sigma_\e,
\end{cases}
\end{equation}
where $\nu$ is the unit outer normal vector of $\Omega_\e$ on $\partial \Omega_\e$.
For every $\alpha\in\R$, $c\ge 0$, and $\e \in [0,1] $, we define the
symmetric bilinear form
\begin{equation}\label{eq:e-eca}
  q_{\e,c,\alpha}:H^1(\Omega_\e) \times H^1(\Omega_\e) \to \R, \quad 
  q_{\e,c,\alpha}(v,w)= \int_{\Omega_\e} \left(\nabla  v\cdot \nabla w
    +c vw\right)
  \dx +\alpha \int_{\partial \Omega_\e} vw \ds,
\end{equation}
where $\Omega_0:=\Omega$. 
We say that $u\in H^1(\Omega_\e)$ is an eigenfunction of
problem \eqref{prob_robin_perturbed_moved} associated to the
eigenvalue $\mu\in\R$ if 
\begin{equation*}
  q_{\e,c,\alpha}(u,v)=\mu \int_{\Omega_\e} uv \dx\quad
  \text{for every }v\in H^1(\Omega_\e),
\end{equation*}
while $u\in H^1(\Omega)$ is a eigenfunction of problem
\eqref{prob_robin_moved} associated to the eigenvalue $\mu$ if
\begin{equation*}
q_{0,c,\alpha}(u,v)=\mu \int_{\Omega} uv \dx \quad\text{for every }v \in H^1(\Omega).
\end{equation*}
By \cite[Theorem 18.1]{L_book_sobolev} the bilinear form
$q_{\e,c,\alpha}$ is semibounded from below. By classical
spectral theory, the spectrum of problem \eqref{prob_robin_perturbed_moved} is a
diverging sequence
$\{\mu_{j,\e,c, \alpha}\}_{j \in \mathbb{N}\setminus\{0\}}$.  It is
clear that
  \begin{equation*}
\mu_{j,\e,c,\alpha}=
c+\lambda_j^\e\quad \text{for every } c\in\R \text{ and }j\in\N\setminus\{0\},
\end{equation*}
where $\{\la_j^\e\}_{j \in \mathbb{N}\setminus\{0\}}$ are the
eigenvalues of \eqref{prob_robin_perturbed}.
Hence, studying the eigenvalue variation for problems
\eqref{prob_robin} and \eqref{prob_robin_perturbed} is equivalent to
studying the eigenvalue variation for problems
\eqref{prob_robin_moved} and \eqref{prob_robin_perturbed_moved} for
any $c\in\R$.  Furthermore, by \cite[Theorem 18.1]{L_book_sobolev} and
Proposition \ref{prop_traces}, for any $\alpha \in \R$ there exist
$c_\alpha>0$, $C_\alpha>0$, and $\e_\alpha\in(0,1)$, such that, for
all $\e \in [0,\e_\alpha)$,
\begin{equation*}
\mu_{1,\e,c_\alpha, \alpha}>1
\end{equation*}
and 
\begin{equation}\label{ineq_norm_qe}
\norm{v}_{H^1(\Omega_\e)}^2 \le C_\alpha q_{\e,c_\alpha ,\alpha} (v,v)
\quad
\text {for every } v\in H^1(\Omega_\e).
\end{equation}
In view of Remark \ref{remark_traces} and
\eqref{ineq_traces_N}--\eqref{ineq_traces_2}, there exists a constant
$\kappa_\alpha>0$ which depends only on $\Omega$, $r_0$, $\Sigma$,
and $\alpha$ such that
\begin{equation}\label{ineq_qe_norm}
\norm{v}_{H^1(\Omega_\e)}^2 \ge \kappa_\alpha q_{\e,c_\alpha ,\alpha}
(v,v) \quad
\text {for every } v\in H^1(\Omega_\e).
\end{equation}
We will study the eigenvalue variation for problems
\eqref{prob_robin_moved} and \eqref{prob_robin_perturbed_moved} with
$c=c_\alpha$.  For any $\alpha \in \R$ fixed (and $c_\alpha$ chosen
accordingly as explained above), let us define
\begin{equation}\label{def_qe}
q_\e:=q_{\e,c_\alpha,\alpha}\quad  \text{ for every }  \e \in [0,1],
\end{equation}
and 
\begin{equation}\label{def_la_ne}
  \mu_{j}^\e:=
  \mu_{j,\e,c_\alpha,\alpha}\quad\text{for every }
  j\in \mathbb{N}\setminus\{0\}\text{ and }\e \in [0,1].
\end{equation}
By classical spectral theory, there exists an orthonormal basis
$\{u_j^\e\}_{j \in \mb{N}\setminus\{0\}}$ of $L^2(\Omega_\e)$ such that $u_{j}^\e$ is
an eigenfunction associated to the eigenvalue
$\mu_{j}^\e$; consequently, it
  is also an eigenfunction of \eqref{prob_robin_perturbed} associated
  to
  \begin{equation}\label{eq:rel-aut}
  \lambda_j^\e=\mu_j^\e-c_\alpha.
\end{equation}
If $j=n$,
  $u_n^0$ and $u_n^\e$ are chosen to satisfy \eqref{hp_u_n0} and
  \eqref{hp_u_ne} respectively.

\subsection{Spectral stability}\label{subsection_spec_stab}
In this subsection, we prove that $\mu_j^\e \to \mu_j^0$ (and hence
$\la_j^\e \to \la_j^0$) as $\e \to 0^+$ for every
$j \in \mb{N}\setminus\{0\}$.  To this end, for every
$\e \in (0,\e_\alpha)$, let $R_\e$ be the inverse of the operator
$-\Delta +c_\alpha$ on $\Omega_\e$ under Robin boundary conditions.
More precisely, for every $\e \in (0,\e_\alpha)$,
\begin{equation*}
R_\e :L^2(\Omega_\e) \to H^1(\Omega_\e)\subset L^2(\Omega_\e),  
\end{equation*}
and, for every $f \in L^2(\Omega_\e)$, $R_\e f $ is the unique weak
solution to the problem
\begin{equation*}
\begin{cases}
-\Delta u+c_\alpha u= f, & \text{ in } \Omega_\e,\\
\pd{u}{\nu} + \alpha u=0, & \text{ on } \partial \Omega,\\
\pd{u}{\nu} + \alpha u=0, & \text{ on } \partial \Sigma_\e,
\end{cases}
\end{equation*}
i.e.
  \begin{equation}\label{eq:deboleris}
    q_\e(R_\e f,v)=\int_{\Omega_\e}fv\dx\quad\text{for all }
    v\in H^1(\Omega_\e).
  \end{equation}
  Similarly, we can define $R_0$ as the inverse of the operator
  $-\Delta +c_\alpha$ on $\Omega$ under Robin boundary conditions.
  For every $\e \in [0,\e_\alpha)$, the operator $R_\e$ is linear and
  bounded as an operator from $L^2(\Omega_\e)$ into $H^1(\Omega_\e)$;
  furthermore, $R_\e$, understood as an operator from $L^2(\Omega_\e)$
  into $L^2(\Omega_\e)$, is compact and self-adjoint. Moreover, since
  $\mu_1^\e>1$, we have $\norm{R_\e}_{\mc{L}(L^2(\Omega_\e))}\le1$,
  where $\norm{R_\e}_{\mc{L}(L^2(\Omega_\e))}$ is the operator norm.

For every $\e \in (0,1]$, we define the linear and continuous operators
\begin{equation*}
P_\e:L^2(\Omega) \to L^2(\Omega_\e), \quad P_\e u:=u_{|\Omega_\e}, 
\end{equation*}
and 
\begin{equation*}
\widetilde{R}_\e:L^2(\Omega) \to L^2(\Omega), \quad \widetilde{R}_\e u:=\Ee R_\e P_\e u.
\end{equation*}

\begin{proposition}\label{prop_Re_to_R0}
For every $u \in L^2(\Omega)$
\begin{equation*}
\lim_{\e \to 0^+}\big\|\widetilde{R}_\e u -R_0 u\big\|_{L^2(\Omega)} =0.    
\end{equation*}
\end{proposition}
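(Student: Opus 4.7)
The plan is to prove that $\widetilde{R}_\e u$ is uniformly bounded in $H^1(\Omega)$, extract a weakly convergent subsequence, identify the weak limit as $R_0 u$ through the variational formulation, and then upgrade the convergence to strong $L^2(\Omega)$ convergence via the Rellich-Kondrachov theorem. Uniqueness of the limit will then yield convergence of the whole family.

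First I would test the weak formulation \eqref{eq:deboleris} with $v = R_\e P_\e u$, and combine Cauchy-Schwarz on the right-hand side with \eqref{ineq_norm_qe} on the left to obtain
\begin{equation*}
  \|R_\e P_\e u\|_{H^1(\Omega_\e)} \le C_\alpha \|u\|_{L^2(\Omega)},
\end{equation*}
uniformly in $\e \in (0,\e_\alpha)$. The uniform extension estimate \eqref{ineq_extension} then gives $\|\widetilde{R}_\e u\|_{H^1(\Omega)} \le \sqrt{K}\,C_\alpha \|u\|_{L^2(\Omega)}$. By weak compactness of bounded sets in $H^1(\Omega)$ and the Rellich-Kondrachov theorem, from any sequence $\e\to 0^+$ I can extract a subsequence $\e_k\to 0^+$ such that $\widetilde{R}_{\e_k} u \rightharpoonup w$ weakly in $H^1(\Omega)$ and strongly in $L^2(\Omega)$, for some $w\in H^1(\Omega)$.

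The key step is the identification $w=R_0 u$. For any $v\in H^1(\Omega)$ the restriction $v|_{\Omega_\e}$ lies in $H^1(\Omega_\e)$, so \eqref{eq:deboleris} with $f=P_\e u$ yields, using \eqref{eq_Ee},
\begin{equation*}
  \int_{\Omega_\e}\!\bigl(\nabla \widetilde{R}_\e u\cdot\nabla v + c_\alpha \widetilde{R}_\e u\, v\bigr)\dx
  + \alpha\!\int_{\partial\Omega}\! \widetilde{R}_\e u\, v \ds
  + \alpha\!\int_{\partial\Sigma_\e}\! \widetilde{R}_\e u\, v \ds
  = \int_{\Omega_\e} u\, v \dx.
\end{equation*}
I would pass to the limit along $\e_k$ term by term. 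The volume integrals over $\Omega_\e$ differ from those over $\Omega$ by contributions supported in $\Sigma_\e$, which vanish because $\widetilde{R}_\e u$ is bounded in $H^1(\Omega)$ while $|\Sigma_{\e}|\to 0$, so $\|v\|_{H^1(\Sigma_\e)}\to 0$ by absolute continuity of the integral; the volume terms on $\Omega$ then pass to the limit by weak $H^1$ convergence. The boundary term on $\partial\Omega$ converges by weak continuity of the trace operator $H^1(\Omega)\to L^2(\partial\Omega)$. The boundary term on $\partial\Sigma_\e$ is controlled via Cauchy-Schwarz by $\|\widetilde{R}_\e u\|_{L^2(\partial\Sigma_\e)}\|v\|_{L^2(\partial\Sigma_\e)}$, which is infinitesimal thanks to the dimension-dependent trace inequalities \eqref{ineq_traces_N}-\eqref{ineq_traces_2} applied to $R_\e P_\e u$ and to $v|_{\Omega_\e}$, together with the uniform $H^1$ bounds. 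The right-hand side converges to $\int_\Omega u v\dx$ by dominated convergence. The limit relation $q_0(w,v) = \int_\Omega u v\dx$ holds for every $v\in H^1(\Omega)$, and by uniqueness of the weak solution of $(-\Delta + c_\alpha)w = u$ with Robin boundary condition we conclude $w=R_0 u$.

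Finally, since every subsequence admits a sub-subsequence converging to the same limit $R_0 u$ in $L^2(\Omega)$, the Urysohn subsequence principle gives convergence of the full family. The main technical obstacle is the trace term on the shrinking boundary $\partial\Sigma_\e$, whose treatment crucially relies on the $\e$-uniform trace constants supplied by Proposition \ref{prop_traces}; once this is handled, the rest is a standard weak-strong compactness and limit-identification scheme.
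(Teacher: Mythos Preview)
Your proposal is correct and follows essentially the same approach as the paper: uniform $H^1$ bounds via coercivity and the extension estimate, weak compactness, identification of the limit by passing to the limit in the variational formulation (handling the $\Sigma_\e$ volume terms by absolute continuity and the $\partial\Sigma_\e$ trace term via Proposition~\ref{prop_traces}), and conclusion by Urysohn and Rellich--Kondrachov. The only cosmetic difference is that the paper bounds $q_\e(R_\e P_\e u,R_\e P_\e u)$ using $\|R_\e\|_{\mc{L}(L^2(\Omega_\e))}\le 1$ rather than your direct Cauchy--Schwarz argument, but the two are equivalent.
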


\begin{proof}
  Let $u \in L^2(\Omega)$. Then by \eqref{ineq_extension},
  \eqref{ineq_norm_qe}, and \eqref{eq:deboleris}, and the fact that
  $\norm{R_\e}_{\mc{L}(L^2(\Omega_\e))} \le 1$,
\begin{align}
\notag \|\widetilde{R}_\e u\|_{H^1(\Omega)}^2 
&\leq K\|R_ \e P_\e u\|_{H^1(\Omega_\e)}^2
\le K  C_\alpha q_\e( R_\e  P_\e u,R_\e  P_\e u)\\ &
\label{ineq_tilde_Re_unifor_bound}=  K C_\alpha (P_\e u,R_\e  P_\e u)_{L^2(\Omega_\e)}  \le K 
C_\alpha\norm{u}^2_{L^2(\Omega)}
\end{align}
for every $\e\in (0,\e_\alpha)$.  In particular, the family
$\{ \widetilde{R}_\e u\}_{\e \in (0,\e_\alpha)}$ is bounded in
$H^1(\Omega)$. Hence there exist $v \in H^1(\Omega)$ and a sequence
$\e_j \to 0^+$ such that $\widetilde{R}_{\e_j} u \rightharpoonup v $
weakly in $H^1(\Omega)$ as $j \to \infty$. Furthermore, for every
$w \in H^1(\Omega)$ and $j\in\N$ we have
\begin{equation}\label{proof_Re_to_R0:1}
q_{\e_j}(R_{\e_j} P_{\e_j} u, w)=(P_{\e_j}u, w)_{L^2(\Omega_{\e_j})}.
\end{equation}
We would like to pass to the limit as $j \to \infty$ in \eqref{proof_Re_to_R0:1}.
By \eqref{def_qe}
\begin{align*}
  q_{\e_j}(R_{\e_j} P_{\e_j} u,   w)&= \int_{\Omega_{\e_j}} \big(\nabla
  (R_{\e_j}
  P_{\e_j} u)\cdot \nabla  w 
  +c_\alpha (R_{\e_j} P_{\e_j} u)  w \big)\dx +\alpha \int_{\partial \Omega_{\e_j}}
  (R_{\e_j} P_{\e_j} u ) w \ds\\
  &=\int_{\Omega}\big( \nabla  (\widetilde{R}_{\e_j} u)\cdot  \nabla w 
    +c_\alpha(\widetilde{R}_{\e_j} u)  w \big)\dx +\alpha \int_{\partial \Omega} (R_{\e_j} P_{\e_j} u ) w \ds\\
  &\qquad-\int_{\Si_{\e_j}}
  \big(\nabla  (\widetilde{R}_{\e_j} u)\cdot \nabla  w 
  +c_\alpha(\widetilde{R}_{\e_j} u) w \big)\dx 
  +\alpha \int_{\partial \Sigma_{\e_j}} (R_{\e_j} P_{\e_j} u ) w \ds
\end{align*}
and, by the Cauchy-Schwarz inequality,
\begin{equation*}
  \left|\int_{\Si_\e} \big(\nabla  (\widetilde{R}_\e u)\cdot \nabla w
    +c_\alpha   (\widetilde{R}_\e u) w\big)\dx \right|\le
  \max\{1,c_\alpha\}
  \|\widetilde{R}_\e u\|_{H^1(\Omega)} \|w\|_{H^1(\Sigma_\e)} \to 0
\end{equation*}
as $\e \to 0^+$,
thanks to the absolute continuity of integrals, since
$|\Si_{\e}|=\e^{N}|\Si| \to 0^+$.  Hence, thanks to
\eqref{ineq_traces_2} if $N=2$ or \eqref{ineq_traces_N} if $N \ge 3$,
we may pass to the limit as $j\to \infty$ in
\eqref{proof_Re_to_R0:1}, thus obtaining
\begin{equation*}
q_{0}(v,w)=(u,w)_{L^2(\Omega)} \quad \text{for all } w \in H^1(\Omega).
\end{equation*}
Hence $v=R_0u$. By the Urysohn Subsequence principle and the
Rellich-Kondrachov Theorem, we conclude that
$\widetilde{R}_\e u \to R_0 u$ strongly in $L^2(\Omega)$.
\end{proof}
Based on Proposition \ref{prop_Re_to_R0}, we are now able to prove the
stability result stated in Theorem \ref{theorem_spectral_stability}.
\begin{proof}[Proof of Theorem \ref{theorem_spectral_stability}]
 We claim that 
\begin{equation} \label{limit_Re_Ro_oper_norm}
\lim_{\e \to 0^+}\|\widetilde{R}_\e  -R_0\|_{\mc{L}(L^2(\Omega))} =0.    
\end{equation}
By compactness of the operator $\widetilde{R}_\e -R_0$, for every
$\e \in (0,\e_\alpha)$ there exists some $f_\e \in L^2(\Omega)$ with
$\norm{f_\e}_{L^2(\Omega)}=1$ such that
\begin{equation*}
  \|R_0-\widetilde{R}_\e\|_{\mc{L}(L^2(\Omega))}=
  \|R_0 f_\e-\widetilde{R}_\e f_\e\|_{L^2(\Omega)}.
\end{equation*}
Since $\{f_\e\}_\e$ is bounded in $L^2(\Omega)$, there exist a
sequence $\e_j \to 0^+$ and some $f \in L^2(\Omega)$ such that
$f_{\e_j} \rightharpoonup f$ weakly in $L^2(\Omega)$ as $j\to
\infty$. By compactness of the operator $R_0$,
$R_0 f_{\e_j} \to R_0 f$ strongly in $L^2(\Omega)$, as $j \to \infty$.
Furthermore, thanks to \eqref{ineq_tilde_Re_unifor_bound}, up to a
further subsequence, $\widetilde{R}_{\e_j}f_{\e_j} \rightharpoonup g$
weakly in $H^1(\Omega)$ for some $g\in H^1(\Omega)$.  For every
$v \in L^2(\Omega)$, since $R_\e$ is self-adjoint we have
\begin{equation*}
  (\widetilde{R}_{\e_j}f_{\e_j},v)_{L^2(\Omega)}=
(  P_{\e_j} f_{\e_j},R_{\e_j}P_{\e_j}v)_{L^2(\Omega_{\e_j})}+
  (\widetilde{R}_{\e_j}f_{\e_j},v)_{L^2(\Si_{\e_j})}. 
\end{equation*}
By \eqref{ineq_tilde_Re_unifor_bound} and the absolute continuity of
integrals, we have
\begin{equation*}
  (\widetilde{R}_{\e_j}f_{\e_j},v)_{L^2(\Si_{\e_j})} \to 0^+ \quad
  \text{as } j \to \infty.
\end{equation*}
Furthermore,
\begin{equation*}
  (P_{\e_j} f_{\e_j},R_{\e_j}P_{\e_j}v)_{L^2(\Omega_{\e_j})}=
  (f_{\e_j},\widetilde{R}_{\e_j}v)_{L^2(\Omega)} -
  (f_{\e_j},\widetilde{R}_{\e_j}v)_{L^2(\Si_{\e_j})}
\end{equation*}
and so, by Proposition \ref{prop_Re_to_R0} and the absolute continuity of
integrals,
\begin{equation*}
(g,v)_{L^2(\Omega)}=\lim_{k\to \infty}
  (\widetilde{R}_{\e_j}f_{\e_j},v)_{L^2(\Omega)}=(f,R_0 v)_{L^2(\Omega)}
  =(R_0 f,v)_{L^2(\Omega)}.
\end{equation*}
Hence $g=R_0 f$. We conclude that
$\widetilde{R}_{\e_j}f_{\e_j} \to R_0f$ strongly in $L^2(\Omega)$ as
$j\to \infty$.  By the Urysohn Subsequence principle and compactness
of $R_0$, we deduce \eqref{limit_Re_Ro_oper_norm}.

It is easy to verify that
$\sigma( \widetilde{R}_\e)= \big\{\frac1{\mu_j^\e}\big\}_{j \in
  \mathbb{N}\setminus\{0\}} \cup \{0\}$, where we are denoting with
$\sigma( \widetilde{R}_\e)$ the spectrum of $\widetilde{R}_\e$.  Hence
\eqref{eq_limit_eigen} follows from \eqref{limit_Re_Ro_oper_norm}, 
\cite[Chapter XI 9.5, Lemma 5, Page~1091]{DSJ_book}, and \eqref{eq:rel-aut}.
\end{proof}

\section{Eigenvalue variation}\label{sec_eign_var}

For any $\alpha \in \R$ fixed, let $c_\alpha>0$ and
$\e_\alpha\in(0,1)$ be as in Section \ref{sec_prelimi}. For every
$\e\in(0,\e_\alpha)$, we consider the bilinear form
$q_\e=q_{\e,c_\alpha,\alpha}$ defined in
\eqref{eq:e-eca} and \eqref{def_qe}.  Let
$n \in \mathbb{N}\setminus \{0\}$ be such that \eqref{eq:simple} is
satisfied; hence, by \eqref{eq:rel-aut}, also the eigenvalue
$\mu_n^0$, defined as in \eqref{def_la_ne}, is simple. Let $u_n^0$ be
an associated eigenfunction satisfying \eqref{hp_u_n0}.

For every $\e \in (0, \e_\alpha)$, we define the linear functional
$L_\e : H^1(\Omega_\e)\to\R$ as
\begin{align}
  \label{def_Le}L_\e(v):=q_\e(u_n^0 ,v)-\mu_n^0 \int_{\Omega_\e}  u^0_n v \dx =
    \int_{\partial \Sigma_\e} \left(\pd{u_n^0}{\nu} +\alpha u_n^0 \right) v \ds,
\end{align}
where $\nu$ is the outer normal unit vector to $\Omega_\e$ on
$\partial \Sigma_\e$. We also consider the quadratic functional
\begin{equation*}
J_\e: H^1(\Omega_\e)\to\R,\quad 
  J_\e(v):=\frac{1}{2}\,q_\e(v,v)-L_\e(v).
\end{equation*}
We omit the proof of the following proposition, since it is
standard. We refer to \cite[Proposition 3.4]{FLO_Neumann} and
\cite[Proposition 2.4]{BS_quantitative} for analogous results in a
different setting.
\begin{proposition}\label{propo_tau_domain}
  For every $\e\in (0,\e_\alpha)$, there exists a unique function
  $V_\e \in H^1(\Omega_\e)$ such that
\begin{equation*}
J_\e(V_\e)=\min\{J_\e(v): v \in H^1(\Omega_\e)\}.
\end{equation*}
Furthermore, $V_\e$ is the unique weak solution to the problem
\begin{equation*}
\begin{dcases}
-\Delta V_\e +c_\alpha V_\e = 0, & \text{in } \Omega_\e, \\
\partial_\nu V_\e  +    \alpha V_\e =0, & \text{on } \partial\Omega,\\
\partial_\nu V_\e  +    \alpha V_\e =\partial_\nu u_n^0  +    \alpha u_n^0, & \text{on } \partial\Sigma_\e,\\
\end{dcases}
\end{equation*}
that is,
\begin{equation}\label{eq_Ve}
q_\e(V_\e,v)=L_\e(v) \quad \text{for every } v \in  H^1(\Omega_\e).
\end{equation}
\end{proposition}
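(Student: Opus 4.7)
The plan is to prove Proposition \ref{propo_tau_domain} by the standard direct method of the calculus of variations, exploiting the coercivity of the bilinear form $q_\e$ on $H^1(\Omega_\e)$ established in \eqref{ineq_norm_qe}, together with the continuity of the linear functional $L_\e$ which comes from the regularity of the limit eigenfunction $u_n^0$.

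First I would verify that $L_\e$ is well-defined and continuous on $H^1(\Omega_\e)$. By classical elliptic regularity, $u_n^0\in C^\infty(\overline{B_{r_0}(x_0)})$, so $\partial_\nu u_n^0+\alpha u_n^0$ is a smooth function of $x$ evaluated on $\partial \Sigma_\e\subset \overline{B_{r_0}(x_0)}$, and in particular it is uniformly bounded on $\partial\Sigma_\e$ for every $\e\in(0,1]$. Hence, for some constant $M_\e>0$,
\begin{equation*}
|L_\e(v)|\leq M_\e \int_{\partial \Sigma_\e}|v|\ds
\leq M_\e \bigl(\mathcal{H}^{N-1}(\partial \Sigma_\e)\bigr)^{1/2}
\left(\int_{\partial \Sigma_\e}v^2\ds\right)^{\!1/2},
\end{equation*}
and the right-hand side is controlled by $\|v\|_{H^1(\Omega_\e)}$ thanks to the trace estimates \eqref{ineq_traces_N}--\eqref{ineq_traces_2} in Proposition \ref{prop_traces}. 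Thus $L_\e\in (H^1(\Omega_\e))^*$.

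Next I would apply the direct method to $J_\e$. By \eqref{ineq_norm_qe} and continuity of $L_\e$,
\begin{equation*}
J_\e(v)\geq \frac{1}{2C_\alpha}\|v\|_{H^1(\Omega_\e)}^2 - \|L_\e\|_{(H^1(\Omega_\e))^*}\|v\|_{H^1(\Omega_\e)},
\end{equation*}
so $J_\e$ is bounded from below and coercive. Since $q_\e$ is a continuous, symmetric, and (by \eqref{ineq_norm_qe}) positive definite bilinear form on $H^1(\Omega_\e)$, $J_\e$ is a continuous, strictly convex, and weakly lower semicontinuous functional. Taking a minimizing sequence $\{v_k\}$, coercivity gives a weakly convergent subsequence in $H^1(\Omega_\e)$, and weak lower semicontinuity yields a minimizer $V_\e$; strict convexity gives uniqueness.

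Finally, computing the first variation, for every $v\in H^1(\Omega_\e)$ and $t\in\R$ one has $J_\e(V_\e+tv)\geq J_\e(V_\e)$, which forces
\begin{equation*}
0=\frac{d}{dt}\bigg|_{t=0}J_\e(V_\e+tv)=q_\e(V_\e,v)-L_\e(v),
\end{equation*}
so \eqref{eq_Ve} holds. Conversely, any $V_\e\in H^1(\Omega_\e)$ satisfying \eqref{eq_Ve} is a critical point of the strictly convex functional $J_\e$ and hence is its unique minimizer. Reading the definition of $L_\e$ in \eqref{def_Le}, \eqref{eq_Ve} is precisely the weak formulation of the stated Robin-type boundary value problem, and uniqueness at the PDE level follows from the uniqueness of the minimizer. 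No step presents a real obstacle; the only care needed is in justifying continuity of $L_\e$ via the smoothness of $u_n^0$ combined with the $\e$-uniform trace bounds from Proposition \ref{prop_traces}.
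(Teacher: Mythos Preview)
Your proof is correct and follows exactly the standard variational argument the paper has in mind: the paper omits the proof as ``standard'' and refers to \cite[Proposition 3.4]{FLO_Neumann} and \cite[Proposition 2.4]{BS_quantitative}, which proceed via the same direct method using coercivity of $q_\e$ (i.e.\ \eqref{ineq_norm_qe}) and continuity of $L_\e$. One could equivalently invoke the Lax--Milgram theorem directly, but your minimization approach is the same in substance.
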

For every $\e\in (0,\e_\alpha)$, we consider the quantity 
\begin{equation}\label{def_Te}
\mc{T}_\e:=-2\left(\frac{1}{2}\,q_\e(V_\e,V_\e)-L_\e(V_\e)\right)=q_\e(V_\e,V_\e).
\end{equation}
The quantity $\mc{T}_\e$, which we refer to as \textit{torsional
  rigidity}, turns out to be an important ingredient in the asymptotic
expansion of the eigenvalue variation.  An alternative
characterization of $\mc{T}_\e$ is given in the following lemma, whose
proof is omitted, as it is similar to the ones given in
\cite[Proposition 6.5]{FNOS_AB_12} and \cite[Lemma 3.1]{FLO_Neumann}
in analogous situations.
\begin{lemma}\label{lemma_torsion_equiv}
For every $\e \in (0,\e_\alpha)$,
\begin{equation}\label{eq_chara_torsion}
\mc{T}_\e=\sup_{u \in H^1(\Omega_\e) \setminus \{0\}} \frac{(L_\e(u))^2}
{q_\e(u,u)}.
\end{equation}
\end{lemma}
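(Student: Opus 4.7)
The plan is to exploit the fact that $q_\e$ is a genuine inner product on $H^1(\Omega_\e)$ (by the coercivity \eqref{ineq_norm_qe}), so that the minimizer $V_\e$ plays the role of a Riesz representer of the linear functional $L_\e$, and the claimed identity then reduces to a Cauchy--Schwarz argument.

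First I would test the weak equation \eqref{eq_Ve} against $V_\e$ itself to obtain
\begin{equation*}
  \mc{T}_\e = q_\e(V_\e,V_\e) = L_\e(V_\e).
\end{equation*}
Next, for an arbitrary $u \in H^1(\Omega_\e)$, substituting $v=u$ in \eqref{eq_Ve} gives $L_\e(u) = q_\e(V_\e,u)$. Since $q_\e$ is symmetric and positive definite (being an inner product equivalent to the $H^1(\Omega_\e)$ one, in view of \eqref{ineq_norm_qe} and \eqref{ineq_qe_norm}), the Cauchy--Schwarz inequality yields
\begin{equation*}
  (L_\e(u))^2 = (q_\e(V_\e,u))^2 \le q_\e(V_\e,V_\e)\, q_\e(u,u) = \mc{T}_\e\, q_\e(u,u).
\end{equation*}
Dividing by $q_\e(u,u)>0$ (which is legal since $q_\e$ is coercive and $u\neq 0$), one obtains the upper bound
\begin{equation*}
  \sup_{u \in H^1(\Omega_\e)\setminus\{0\}} \frac{(L_\e(u))^2}{q_\e(u,u)} \le \mc{T}_\e.
\end{equation*}

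For the reverse inequality, I would distinguish two cases. If $V_\e\not\equiv 0$, then $\mc{T}_\e = q_\e(V_\e,V_\e)>0$, and choosing $u = V_\e$ as a test function in the supremum gives exactly
\begin{equation*}
  \frac{(L_\e(V_\e))^2}{q_\e(V_\e,V_\e)} = \frac{\mc{T}_\e^2}{\mc{T}_\e} = \mc{T}_\e,
\end{equation*}
so the supremum is attained and equals $\mc{T}_\e$. If instead $V_\e\equiv 0$, then \eqref{eq_Ve} forces $L_\e\equiv 0$ on $H^1(\Omega_\e)$, so that $\mc{T}_\e=0$ and the supremum on the right-hand side of \eqref{eq_chara_torsion} is also $0$, yielding trivial equality.

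There is essentially no serious obstacle here: the only subtle point is to make sure that the Cauchy--Schwarz inequality for $q_\e$ is legitimate, which is why I would explicitly invoke \eqref{ineq_norm_qe}--\eqref{ineq_qe_norm} to assert that $q_\e$ is an inner product (in particular that $q_\e(u,u)>0$ whenever $u\neq 0$), and to separate out the degenerate possibility $V_\e\equiv 0$ before dividing. Once these positivity considerations are in place, the statement follows directly from the variational characterization of $V_\e$ established in Proposition \ref{propo_tau_domain}.
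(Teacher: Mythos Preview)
Your proof is correct and follows the standard Riesz representer/Cauchy--Schwarz argument, which is precisely the approach the paper has in mind: the authors omit the proof and refer to \cite[Proposition 6.5]{FNOS_AB_12} and \cite[Lemma 3.1]{FLO_Neumann}, where the same reasoning is carried out in analogous settings.
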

Thanks to the above lemma, we obtain the following quantitative
estimate of the vanishing order of $\mc{T}_\e$ as $\e \to 0^+$.
\begin{proposition}\label{prop_estimates_torsion}
  Let  $k\geq 0$ be the vanishing order of $u_n^0$ at $x_0=0$ and
    $h:= \max\{k,1\}$. Then 
\begin{equation}\label{eq_estimate_tors_N}
\mc{T}_\e=
\begin{cases}
O(\e^{N+2h-2}), & \text{ if } N\geq 3,\\
O(\e^{2h-\delta}), & \text{ if } N=2,
\end{cases}\quad\text{as }\e\to0^+,
\end{equation}
for every $\delta\in(0,1)$.
In particular,
\begin{equation}\label{limit_Ve_norm_0}
  \lim_{\e \to 0^+}q_\e(V_\e,V_\e)=\lim_{\e \to 0^+}
  \norm{V_\e}^2_{H^1(\Omega_\e)}=\lim_{\e \to 0^+}\mc{T}_\e=0.
\end{equation}
\end{proposition}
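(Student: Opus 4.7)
The plan is to leverage the dual characterization of the torsional rigidity provided by Lemma \ref{lemma_torsion_equiv} together with the trace bounds of Proposition \ref{prop_traces} and the coercivity estimate \eqref{ineq_norm_qe}. Combining these with a pointwise estimate of $\partial_\nu u_n^0+\alpha u_n^0$ on $\partial\Sigma_\e$ coming from the vanishing order of $u_n^0$ at $x_0=0$ will yield the two regimes in \eqref{eq_estimate_tors_N}.

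First I would apply the Cauchy--Schwarz inequality to the definition \eqref{def_Le} of $L_\e$ to get
\begin{equation*}
  (L_\e(v))^2 \le \left(\int_{\partial\Sigma_\e}\!\!
  \left(\partial_\nu u_n^0+\alpha u_n^0\right)^{2}\ds\right)
  \left(\int_{\partial\Sigma_\e} v^{2}\ds\right)
  \quad\text{for every } v\in H^1(\Omega_\e).
\end{equation*}
Then, by Proposition \ref{prop_traces}, the second factor is bounded by $C\e\|v\|^2_{H^1(\Omega_\e)}$ if $N\ge3$ and by $C_\delta\e^{1-\delta}\|v\|^2_{H^1(\Omega_\e)}$ if $N=2$. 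Plugging this into \eqref{eq_chara_torsion} and using \eqref{ineq_qe_norm} to bound $q_\e(v,v)$ from below by $\kappa_\alpha^{-1}\|v\|^2_{H^1(\Omega_\e)}$ (after observing that the supremum is unchanged replacing $q_\e$ with the equivalent $H^1$ norm), one reduces the problem to estimating
\begin{equation*}
  \mc{A}_\e:=\int_{\partial\Sigma_\e}\left(\partial_\nu u_n^0+\alpha u_n^0\right)^{2}\ds.
\end{equation*}

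For the bound on $\mc{A}_\e$, I would use the smoothness of $u_n^0$ at $x_0=0$ together with the vanishing order hypothesis. If $k=0$ (so $h=1$), then $u_n^0$ and $\nabla u_n^0$ are bounded in a neighborhood of $0$, so $(\partial_\nu u_n^0+\alpha u_n^0)^2=O(1)$ on $\partial\Sigma_\e$; since $\mc{H}^{N-1}(\partial\Sigma_\e)=\e^{N-1}\mc{H}^{N-1}(\partial\Sigma)$, this gives $\mc{A}_\e=O(\e^{N-1})$. If instead $k\ge1$ (so $h=k$), Taylor's formula yields $u_n^0(x)=O(|x|^k)$ and $\nabla u_n^0(x)=O(|x|^{k-1})$ as $x\to0$, hence $(\partial_\nu u_n^0+\alpha u_n^0)^2=O(\e^{2k-2})$ uniformly on $\partial\Sigma_\e$ and $\mc{A}_\e=O(\e^{N+2k-3})$. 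In both cases we obtain $\mc{A}_\e=O(\e^{N+2h-3})$.

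Putting the pieces together, one finds
\begin{equation*}
  \mc{T}_\e\le C\,\mc{A}_\e\cdot
  \begin{cases}
    \e, & N\ge3,\\
    \e^{1-\delta}, & N=2,
  \end{cases}
\end{equation*}
which gives respectively $O(\e^{N+2h-2})$ and $O(\e^{2h-\delta})$, proving \eqref{eq_estimate_tors_N}. The limits in \eqref{limit_Ve_norm_0} then follow at once: the exponent $N+2h-2\ge N>0$ in dimension $N\ge3$, and in dimension $N=2$ the exponent $2h-\delta\ge 2-\delta>0$ for $\delta\in(0,1)$; the convergence of $\|V_\e\|^2_{H^1(\Omega_\e)}$ is obtained from \eqref{def_Te} and \eqref{ineq_qe_norm}. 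I don't expect a serious obstacle: the only point requiring some care is treating the two cases $k=0$ and $k\ge 1$ simultaneously and checking that the bookkeeping of exponents agrees with $h=\max\{k,1\}$ in each dimensional regime.
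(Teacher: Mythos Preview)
Your proposal is correct and follows essentially the same route as the paper: Cauchy--Schwarz on $L_\e$, the trace estimates of Proposition~\ref{prop_traces}, the variational characterization \eqref{eq_chara_torsion}, and the case split $k=0$ versus $k\ge1$ to bound $\mc{A}_\e$. The only slip is that you twice cite \eqref{ineq_qe_norm} where you need the coercivity estimate \eqref{ineq_norm_qe} (the former bounds $q_\e$ from above, not below); with that reference corrected, your argument matches the paper's.
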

\begin{proof}
If $N \ge 3$, by \eqref{ineq_traces_N} and \eqref{ineq_norm_qe}, 
\begin{align*}
  \int_{\partial \Sigma_\e}\left[ \pd{u_n^0}{\nu}+\alpha u_n^0\right]u
  \ds&
       \le \sqrt{C}\, \e^{\frac{1}{2}}\norm{\pd{u_n^0}{\nu}+
       \alpha u_n^0}_{L^2(\partial \Sigma_\e)}\norm{u}_{H^1(\Omega_\e)} \\
     &\le \sqrt{C}\, \sqrt{C_\alpha} \e^{\frac{1}{2}}\norm{\pd{u_n^0}{\nu}+\alpha
       u_n^0}_{L^2(\partial \Sigma_\e)} \sqrt{q_\e(u,u)}
\end{align*}
for every $u \in H^1(\Omega_\e)$.
Hence, in view of
\eqref{eq_chara_torsion},
\begin{equation}\label{eq:first-est-Te}
  \mc{T}_\e \le  C C_\alpha \e \int_{\partial \Sigma_\e}
  \bigg(\pd{u_n^0}{\nu}+\alpha u_n^0\bigg)^2 \ds.
\end{equation}
If $u_n^0(0)=0$, then $k=h\geq1$, $u_n^0 (x)=O(|x|^k)$ and 
$|\nabla u_n^0 (x)|=O(|x|^{k-1})$ as $x\to0$, so that
\eqref{eq:first-est-Te} yields $\mathcal T_\e=O(\e^{N+2k-2})=O(\e^{N+2h-2})$ as
$\e\to0^+$. If $u_n^0(0)\neq 0$, then $h=1$ and, by boundedness of
$u_n^0$ and $\nabla u_n^0$ in a neighbourhood of $0$,
\eqref{eq:first-est-Te}
directly implies that  $\mathcal T_\e=O(\e^{N})=O(\e^{N+2h-2})$ as
$\e\to0^+$. Estimate \eqref{eq_estimate_tors_N} is thereby proved in
the case $N\geq3$.

The proof of \eqref{eq_estimate_tors_N} in the case $N=2$ proceeds
similarly, using \eqref{ineq_traces_2} instead of \eqref{ineq_traces_N}.
\end{proof}

\begin{proposition}\label{prop_norm_Ve_L2_H1}
Let $V_\e$ be as in Proposition \ref{propo_tau_domain}. Then 
\begin{equation}\label{eq:norm_Ve1}
\norm{V_\e}^2_{L^2(\Omega_\e)}=o\left(\mc{T}_\e\right) \quad  \text{as } \e \to 0^+,
\end{equation}
and 
\begin{equation}\label{eq:norm_Ve2}
\norm{V_\e}^2_{L^2(\partial \Omega)}=o\left(\mc{T}_\e\right) \quad  \text{as } \e \to 0^+.
\end{equation}
\end{proposition}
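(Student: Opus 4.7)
The plan is to renormalize $V_\e$ by $\sqrt{\mc T_\e}$ and to prove that, after extension to $\Omega$, the resulting family converges weakly to $0$ in $H^1(\Omega)$; the two stated $L^2$-estimates will then follow at once from Rellich--Kondrachov and from the compactness of the trace operator $H^1(\Omega)\hookrightarrow L^2(\partial\Omega)$. Assuming without loss of generality that $\mc T_\e>0$ (otherwise $V_\e\equiv 0$ and the statement is trivial), I would set $\widetilde V_\e:=V_\e/\sqrt{\mc T_\e}$, so that $q_\e(\widetilde V_\e,\widetilde V_\e)=1$. By \eqref{ineq_norm_qe} and Proposition~\ref{prop_extension}, the family $\{\Ee\widetilde V_\e\}$ is bounded in $H^1(\Omega)$; hence, along any sequence $\e_j\to 0^+$, I can extract a subsequence (not relabeled) such that $\Ee\widetilde V_{\e_j}\rightharpoonup W$ weakly in $H^1(\Omega)$, strongly in $L^2(\Omega)$, and with traces strongly convergent in $L^2(\partial\Omega)$.

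The heart of the argument is identifying $W=0$. For each $v\in C^\infty(\overline\Omega)$, I would divide \eqref{eq_Ve} by $\sqrt{\mc T_\e}$ to obtain
\begin{equation*}
q_{\e_j}(\widetilde V_{\e_j},v)=L_{\e_j}(v)/\sqrt{\mc T_{\e_j}}.
\end{equation*}
The left-hand side passes to $q_0(W,v)$: volume integrals on $\Omega_{\e_j}$ and $\Omega$ differ by contributions on $\Sigma_{\e_j}$ that vanish by absolute continuity of integrals, traces on $\partial\Omega$ converge strongly, and the boundary term on $\partial\Sigma_{\e_j}$ is killed by combining \eqref{ineq_traces_N} (or \eqref{ineq_traces_2} when $N=2$) with the smoothness of $v$. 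For the right-hand side, I would split $v=v(0)+(v-v(0))$ with $|v-v(0)|=O(\e)$ on $\partial\Sigma_\e$, rescale $\partial\Sigma_\e\to\partial\Sigma$, and use Taylor expansion of $u_n^0$ at $0$ to read off $|L_\e(v)|=O(\e^{N+k-1})$ when $u_n^0(0)=0$ of order $k\ge 1$ and $|L_\e(v)|=O(\e^{N-1})$ otherwise; a direct matching with the sharp rate of $\mc T_\e$ from \eqref{eq_estimate_tors_N} yields $L_{\e_j}(v)/\sqrt{\mc T_{\e_j}}\to 0$ in every case and in every dimension $N\ge 2$. Consequently $q_0(W,v)=0$ for all $v\in C^\infty(\overline\Omega)$ and, by density, for all $v\in H^1(\Omega)$; testing with $v=W$ and invoking the coercivity \eqref{ineq_norm_qe} at $\e=0$ forces $W=0$.

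The Urysohn subsequence principle then promotes the convergences to the full family $\e\to 0^+$: $\Ee\widetilde V_\e\to 0$ strongly in $L^2(\Omega)$ and its trace strongly in $L^2(\partial\Omega)$. Multiplying by $\mc T_\e$ yields
\begin{equation*}
\|V_\e\|_{L^2(\Omega_\e)}^2\le \|\Ee\widetilde V_\e\|_{L^2(\Omega)}^2\,\mc T_\e=o(\mc T_\e)
\end{equation*}
and, analogously, $\|V_\e\|_{L^2(\partial\Omega)}^2=o(\mc T_\e)$, as claimed. The main technical obstacle, where I expect the computation to be most delicate, is the case-by-case verification that $L_\e(v)/\sqrt{\mc T_\e}\to 0$ for smooth $v$: it forces a systematic accounting of the powers of $\e$ coming from the Taylor expansion of $u_n^0$ (governed by its vanishing order $k$ at $0$), the surface measure $O(\e^{N-1})$ of $\partial\Sigma_\e$, and the Lipschitz gain $O(\e)$ from $v-v(0)$, to be played against the exponent $N+2h-2$ (or $2h-\delta$ for $N=2$) in \eqref{eq_estimate_tors_N}.
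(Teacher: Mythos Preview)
Your overall strategy---normalize by $\sqrt{\mc T_\e}$, extend, extract a weak $H^1(\Omega)$-limit $W$, and show $W=0$---is essentially the same as the paper's (which instead argues by contradiction and normalizes by the $L^2$-norm, but this is cosmetic). The real gap is in the step where you claim $L_\e(v)/\sqrt{\mc T_\e}\to 0$ for every $v\in C^\infty(\overline\Omega)$. This requires a \emph{lower} bound on $\mc T_\e$, while \eqref{eq_estimate_tors_N} gives only the upper bound $\mc T_\e=O(\e^{N+2h-2})$ (or $O(\e^{2h-\delta})$ for $N=2$). The sharp lower bound is established only later, in Theorem~\ref{theorem_blow_up} for $N\geq 3$, and for $N=2$ is obtained in the paper only for spherical holes; so at this stage of the argument you cannot invoke it without circularity, and for general $\Sigma$ in dimension $2$ it is never available. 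Even granting a sharp rate, the bookkeeping can fail: for instance, when $N=2$ and $u_n^0(0)\neq 0$, one has $L_\e(v)=v(0)\,\alpha\,u_n^0(0)\,\mc H^1(\partial\Sigma)\,\e+O(\e^2)$, which, divided by $\sqrt{\mc T_\e}$ with $\mc T_\e$ of order roughly $\e^2$, does not tend to zero.

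The remedy is simple and is exactly what the paper does: restrict the test functions to $\varphi\in C_c^\infty(\overline{\Omega}\setminus\{0\})$. For $\e$ small, such $\varphi$ are supported away from $\overline{\Sigma_\e}$, so $L_\e(\varphi)=0$ identically and no rate for $\mc T_\e$ is needed. Passing to the limit then gives $q_0(W,\varphi)=0$ for all such $\varphi$, and since $C_c^\infty(\overline{\Omega}\setminus\{0\})$ is dense in $H^1(\Omega)$ (a single point has zero $H^1$-capacity in dimension $N\geq 2$), one concludes $W=0$ from the coercivity \eqref{ineq_norm_qe} at $\e=0$. With this change, the rest of your outline goes through.
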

\begin{proof}
In view of \eqref{ineq_norm_qe}, to prove \eqref{eq:norm_Ve1} it is enough to show that 
\begin{equation*}
  \|V_\e\|^2_{L^2(\Omega_\e)}=o\big(\|V_\e\|^2_{H^1(\Omega_\e)}\big)
\quad  \text{as } \e \to 0^+.
\end{equation*}
We argue by contradiction. Suppose that there exist a positive
constant $c>0$ and a sequence $\e_j\to0^+$ such that
\begin{equation*}
  \|V_{\e_j}\|^2_{L^2(\Omega_{\e_j})} \ge c
  \|V_{\e_j}\|^2_{H^1(\Omega_{\e_j})}
  \quad \text{for every } j \in \mb{N}.
\end{equation*}
For every $j\in\N$, we define
\begin{equation*}
  W_{j}:=\frac{
    \mathsf{E}_{\e_j}(V_{\e_j})}{\|
\mathsf{E}_{\e_j}
      (V_{\e_j}) \|_{L^2(\Omega)}},
\end{equation*}
where $\mathsf{E}_{\e_j}$ is the extension operator introduced
in Proposition \ref{prop_extension}. Then $\{W_{j}\}_{j \in \mb{N}}$
is bounded in $H^1(\Omega)$ by Proposition \ref{prop_extension}. It
follows that, up to a subsequence, $W_{j} \rightharpoonup W$ weakly in
$H^1(\Omega)$ as $j\to+ \infty$, for some $W \in H^1(\Omega)$.  The
compactness of the embedding $H^1(\Omega)\subset L^2(\Omega)$ implies
that $\|W\|_{L^2(\Omega)}=1$.

Let $\varphi \in C^\infty_c(\overline \Omega\setminus \{0\})$. Then
there exists $j_0 \in \mb{N}$ such that
$\varphi \in C^\infty_c(\overline \Omega\setminus\overline{\Sigma_{\e_j}})$ for
all $j \ge j_0$. Then, by \eqref{eq_Ee} and \eqref{eq_Ve},
\begin{equation}\label{eq:Wj}
  \int_{\Omega} \big(\nabla W_{j} \cdot \nabla \varphi
  +c_\alpha  W_{j}  \varphi \big) \dx +\alpha  \int_{\partial\Omega}   W_{j}  \varphi \ds=0.
\end{equation}
Passing to the limit as $j \to +\infty$, we conclude that 
\begin{equation}\label{eq:W}
  \int_{\Omega}\big(\nabla W\cdot \nabla \varphi
  +c_\alpha  W  \varphi \big)\dx +\alpha  \int_{\partial\Omega}   W
  \varphi\ds=0
  \quad\text{for every }\varphi\in C^\infty_c(\overline \Omega\setminus \{0\}).
\end{equation}
Since $C^\infty_c(\overline \Omega\setminus \{0\})$ is dense in
$H^1(\Omega)$, we may test the above equation with $W$ and conclude
that $W=0$ in view of \eqref{ineq_norm_qe} with $\e=0$.  This is a
contradiction because $\|W\|_{L^2(\Omega)}=1$. Estimate
\eqref{eq:norm_Ve1} is thereby proved.

To prove \eqref{eq:norm_Ve2} we argue in a similar way, and
  assume, by contradiction, that there exist $c>0$ and a sequence $\e_j\to0^+$ such that
\begin{equation*}
  \|V_{\e_j}\|^2_{L^2(\partial \Omega)} \ge c
  \|V_{\e_j}\|^2_{H^1(\Omega_{\e_j})}
  \quad \text{for every } j \in \mb{N}.
\end{equation*}
For every $j\in\N$, we define
$Z_{j}:=\|V_{\e_j} \|_{L^2(\partial \Omega)}^{-1}
\mathsf{E}_{\e_j}(V_{\e_j})$. By Proposition \ref{prop_extension},
it follows that $\{Z_{j}\}_{j \in \mb{N}}$ is bounded in $H^1(\Omega)$,
hence there exists $Z \in H^1(\Omega)$ such that
$Z_{j} \rightharpoonup Z$ weakly in $H^1(\Omega)$, up to a
subsequence.  The compactness of the trace map 
$H^1(\Omega)\to L^2(\partial \Omega)$ implies that
$\|Z\|_{L^2(\partial \Omega)}=1$.

Letting $\varphi \in C^\infty_c(\overline \Omega\setminus \{0\})$, we
have that $Z_j$ satisfies \eqref{eq:Wj} for all $j$ sufficiently
large, so that, passing to the limit, $Z$ satisfies \eqref{eq:W} for
every $\varphi \in C^\infty_c(\overline \Omega\setminus \{0\})$, and
hence, by density of $C^\infty_c(\overline \Omega\setminus \{0\})$ in
$H^1(\Omega)$, for every $\varphi \in H^1(\Omega)$. As above, this
would imply that $Z=0$ in view of \eqref{ineq_norm_qe}, giving rise to
a contradiction, because $\|Z\|_{L^2(\partial \Omega)}=1$. Estimate
\eqref{eq:norm_Ve2} is thereby proved.
\end{proof}

The following theorem applies a known perturbative theory for
eigenvalue problems in perforated domains to our context.  Originally
developed by Courtois in \cite{C_mainfols_holes} for the Dirichlet
problem, this theory is based on the \emph{Lemma on small eigenvalues}
by Y. Colin de Verdi\`ere \cite{ColindeV1986}. It was later revisited
in \cite{AFHC_spec_AB} for simple eigenvalues and in
\cite{ALM_cap_exp} for multiple ones, whereas a recent extension to
Neumann problem is due to \cite{FLO_Neumann}.  A generalization to a
broad class of abstract perturbative problems -- including the
aforementioned ones -- is provided in \cite{BS_quantitative}; a
simplified version of this, adapted to our case, is presented in
Appendix \ref{sec_appendix} and will now be applied to problem
\eqref{prob_robin_perturbed}.
  \begin{theorem}\label{theo_eigen_var_abs}
  Let $n\in\N\setminus\{0\}$ be such that the $n$-th eigenvalue
  $\lambda_n^0$ of \eqref{prob_robin} satisfies \eqref{eq:simple},
  with an associated eigenfunction $u_n^0$ as in \eqref{hp_u_n0};
  for every $\e\in (0,1)$, let $\lambda_n^\e$ be the $n$-th eigenvalue
  of \eqref{prob_robin_perturbed}. Then
  \begin{align}
 \notag \la_n^\e -\la_n^0&= -\mc{T}_\e -
  \int_{\Sigma_\e}\big( |\nabla u_n^0|^2-\la_n^0  |u_n^0|^2\big)
  \dx
  +\alpha\int_{\partial \Sigma_\e}  |u_n^0|^2\ds\\
    \label{eq_eigen_var_abs}
    &\quad+o(\mc{T}_\e)+o\left(\int_{\Sigma_\e}\big( |\nabla u_n^0|^2
    -\la_n^0 |u_n^0|^2\big) \dx\right)
  +o\left(\int_{\partial \Sigma_\e}  |u_n^0|^2\ds\right)
  \end{align}
  as $\e \to 0^+$.
Moreover, if  $u_n^\e$ is  the eigenfunction of
\eqref{prob_robin_perturbed} associated with the eigenvalue $\la_n^\e$
chosen as in \eqref{hp_u_ne} for every $\e\in(0,1)$, then 
\begin{align}
&\|u_n^\e - (u_n^0
                -V_\e)\|^2_{H^1(\Omega_\e)}=o\left(\mc{T}_\e\right)
                +O\big(\|u_n^0\|_{L^2(\Sigma_\e)}^4\big), \label{eq_eigenfun_abs_with_Ve}\\
  &\|u_n^\e - u_n^0\|^2_{H^1(\Omega_\e)}=\mc{T}_\e
    +o\left(\mc{T}_\e\right)+O\big(\|u_n^0\|_{L^2(\Sigma_\e)}^4\big)
    +O\left(\mc{T}_\e^{1/2}\|u_n^0\|_{L^2(\Sigma_\e)}^2\right)
    , \label{eq_eigenfun_abs}
\end{align}
as $\e\to0^+$.
\end{theorem}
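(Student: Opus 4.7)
The plan is to apply the abstract perturbation result of Appendix \ref{sec_appendix}, which is a reformulation in our framework of the Courtois--Colin de Verdière Lemma on small eigenvalues. The inputs I need to provide are: a simple eigenvalue $\mu_n^0$ of the reference form $q_0$ on $H^1(\Omega)$ with normalized eigenfunction $u_n^0$; the family of forms $\{q_\e\}_{\e\in(0,\e_\alpha)}$ which, thanks to the shift $c_\alpha$ and Proposition \ref{prop_traces}, satisfy the uniform norm equivalence \eqref{ineq_norm_qe}--\eqref{ineq_qe_norm}; and the trial function $\phi_\e := u_n^0|_{\Omega_\e} - V_\e$, where $V_\e$ is the corrector from \eqref{eq_Ve}. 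The extension operators of Proposition \ref{prop_extension} and the spectral stability of Theorem \ref{theorem_spectral_stability} ensure that the uniform spectral gap around $\mu_n^0$ is inherited by the perturbed problems for small $\e$, so the abstract machinery can run.

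The core computation is the evaluation of the Rayleigh-type quantity for $\phi_\e$. Combining the defining property $q_\e(V_\e,v)=L_\e(v)$ (which yields $q_\e(V_\e,V_\e)=L_\e(V_\e)=\mc{T}_\e$ and $q_\e(V_\e,u_n^0)=L_\e(u_n^0)$) with the identity $L_\e(u_n^0)=q_\e(u_n^0,u_n^0)-\mu_n^0\|u_n^0\|_{L^2(\Omega_\e)}^2$ from \eqref{def_Le}, together with the consequent relation $\mu_n^0(u_n^0,V_\e)_{L^2(\Omega_\e)}=L_\e(u_n^0)-\mc{T}_\e$ (obtained by evaluating $L_\e(V_\e)$ via its definition), a short algebra gives
\[
q_\e(\phi_\e,\phi_\e)-\mu_n^0\|\phi_\e\|_{L^2(\Omega_\e)}^2 = L_\e(u_n^0) - \mc{T}_\e - \mu_n^0\|V_\e\|_{L^2(\Omega_\e)}^2.
\]
Proposition \ref{prop_norm_Ve_L2_H1} gives $\|V_\e\|_{L^2(\Omega_\e)}^2 = o(\mc{T}_\e)$, while $\|\phi_\e\|_{L^2(\Omega_\e)}^2 = 1 + o(1)$, so the Rayleigh quotient of $\phi_\e$ equals $\mu_n^0 + L_\e(u_n^0) - \mc{T}_\e + o(\mc{T}_\e)$. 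Integration by parts in $\Sigma_\e$, using $-\Delta u_n^0 = \lambda_n^0 u_n^0$ in the interior and the fact that $\nu$ points into $\Sigma_\e$ in \eqref{def_Le}, rewrites $L_\e(u_n^0)$ as $-\int_{\Sigma_\e}(|\nabla u_n^0|^2 - \lambda_n^0|u_n^0|^2)\dx + \alpha\int_{\partial\Sigma_\e}|u_n^0|^2\ds$, matching exactly the leading terms of \eqref{eq_eigen_var_abs} once one recalls $\mu_n^\e-\mu_n^0=\la_n^\e-\la_n^0$.

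To promote this Rayleigh-quotient information into sharp expansions for the actual eigenvalue $\mu_n^\e$ and eigenfunction $u_n^\e$, I invoke the abstract theorem. The key observation is that $\phi_\e$ is a genuine quasi-mode: $q_\e(\phi_\e,v)=\mu_n^0(u_n^0,v)_{L^2(\Omega_\e)}$ for every $v\in H^1(\Omega_\e)$. Combined with the simplicity of $\mu_n^0$ and the uniform spectral gap inherited via Theorem \ref{theorem_spectral_stability}, this lets one write $\phi_\e = a_\e u_n^\e + w_\e$ with $w_\e$ $L^2(\Omega_\e)$-orthogonal to the $n$-th eigenspace of $q_\e$ and $\|w_\e\|_{H^1(\Omega_\e)}^2 = o(\mc{T}_\e)$, giving both the eigenvalue expansion \eqref{eq_eigen_var_abs} and the bound $\|u_n^\e - a_\e\phi_\e\|_{H^1(\Omega_\e)}^2 = o(\mc{T}_\e)$ with $a_\e\to 1$. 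The renormalization $\|u_n^\e\|_{L^2(\Omega_\e)}=1$ against $\|\phi_\e\|_{L^2(\Omega_\e)}^2 = 1 - \|u_n^0\|_{L^2(\Sigma_\e)}^2 + o(\mc{T}_\e^{1/2})$ forces $a_\e = 1 + O(\|u_n^0\|_{L^2(\Sigma_\e)}^2) + o(\mc{T}_\e^{1/2})$, whence the extra term $O(\|u_n^0\|_{L^2(\Sigma_\e)}^4)$ in \eqref{eq_eigenfun_abs_with_Ve}. Then \eqref{eq_eigenfun_abs} follows from \eqref{eq_eigenfun_abs_with_Ve}, the triangle inequality, and $\|V_\e\|_{H^1(\Omega_\e)}^2 = \mc{T}_\e + o(\mc{T}_\e)$, which comes from \eqref{ineq_norm_qe}, \eqref{def_Te}, and Proposition \ref{prop_norm_Ve_L2_H1}.

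The main obstacle is the verification of the abstract hypotheses in the Robin setting: the sign-indefinite boundary form $\alpha\int_{\partial\Omega_\e}vw\ds$ (when $\alpha<0$) obstructs the direct Dirichlet/Neumann Courtois framework, and this is precisely circumvented by the uniform coercivity shift $c_\alpha$ from Section \ref{sec_prelimi}. A subtler point is that the trace constants on $\partial\Sigma_\e$ depend on $\e$ in a borderline way (linearly in $\e$ for $N\geq 3$, as $\e^{1-\delta}$ for $N=2$) and must be compatible with how the abstract remainders are stated; this compatibility is ensured by Proposition \ref{prop_traces} together with the $\e$-uniform extension of Proposition \ref{prop_extension}, which were expressly designed so that all quantitative bounds on $L_\e$ and $V_\e$ remain uniform in $\e$.
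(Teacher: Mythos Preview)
Your proposal is correct and follows essentially the same approach as the paper: both verify the hypotheses of the abstract Theorem~\ref{theorem_expansion_very_abs} via the coercivity shift $c_\alpha$, Theorem~\ref{theorem_spectral_stability}, and Proposition~\ref{prop_estimates_torsion}, and both reduce the eigenvalue expansion to the identity $\mu_n^0(u_n^0,V_\e)_{L^2(\Omega_\e)}=L_\e(u_n^0)-\mc{T}_\e$ together with the integration-by-parts computation of $L_\e(u_n^0)$ and Proposition~\ref{prop_norm_Ve_L2_H1}. Your presentation packages this as a Rayleigh-quotient/quasi-mode argument for $\phi_\e=u_n^0-V_\e$ (noting $q_\e(\phi_\e,v)=\mu_n^0(u_n^0,v)_{L^2(\Omega_\e)}$), while the paper directly plugs into the output formula of Theorem~\ref{theorem_expansion_very_abs}; the only minor slips are the notation $u_n^\e-a_\e\phi_\e$ (you mean $a_\e u_n^\e-\phi_\e=-w_\e$) and calling the derivation of \eqref{eq_eigenfun_abs} a ``triangle inequality'' when a Cauchy--Schwarz bound on the cross term $(u_n^\e-\phi_\e,V_\e)_{H^1(\Omega_\e)}$ is what produces the $O(\mc{T}_\e^{1/2}\|u_n^0\|_{L^2(\Sigma_\e)}^2)$ contribution.
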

\begin{proof}
In view of \eqref{def_Sigma_e},
\begin{equation}\label{proof_eigen_var_abs_1}
\lim_{\e \to 0^+} |\overline{\Sigma_\e}|_N=0,
\end{equation}
where $|\overline{\Sigma_\e}|_N$ denotes the $N$-dimensional
  Lebesgue measure of $\overline{\Sigma_\e}$.  This, together with
\eqref{eq:simple}, Theorem \ref{theorem_spectral_stability},
\eqref{eq:rel-aut}, \eqref{ineq_norm_qe}, and \eqref{limit_Ve_norm_0},
ensures that the assumptions of Theorem \ref{theorem_expansion_very_abs} are
satisfied.  Hence, Theorem
\ref{theorem_expansion_very_abs} yields
\begin{equation}\label{proof_eigen_var_abs_2}
  \la_n^\e -\la_n^0=\mu_n^\e -\mu_n^0=
  \frac{\mu_n^0\int_{\Omega_\e} u_n^0 V_\e \dx
    +O\big(\|V_\e\|^2_{L^2(\Omega_\e)}
    \big)}{\int_{\Omega_\e} |u_n^0|^2 \dx+O\big(\|V_\e\|_{L^2(\Omega_\e)}\big)} 
  \quad \text{as }\e \to 0^+.
\end{equation}
To clarify the above expansion further, we first observe, by
\eqref{hp_u_n0}, \eqref{proof_eigen_var_abs_1}, and \eqref{limit_Ve_norm_0},  that
\begin{equation}\label{proof_eigen_var_abs_3}
  \frac{1}{\int_{\Omega_\e} |u_n^0|^2
    \dx+O(\norm{V_\e}_{L^2(\Omega_\e)}) }
  =\frac{1}{1+o(1)} = 1+o(1)\quad \text{as }\e \to 0^+.
\end{equation}
Furthermore, 
\begin{equation}\label{proof_eigen_var_abs_4}
O(\|V_\e\|_{L^2(\Omega_\e)}^2)= o(\mc{T}_\e) \quad \text{as }\e \to 0^+,
\end{equation}
in view of Proposition \ref{prop_norm_Ve_L2_H1}.  Finally, by
\eqref{def_Le} tested with $V_\e$, \eqref{eq_Ve}
tested with $V_\e$, \eqref{eq_Ve} tested with $u_n^0$, 
\eqref{prob_robin}, 
and an integration by part,
\begin{align}
 \notag \mu_n^0\int_{\Omega_\e} u_n^0 V_\e \dx&=q_\e(V_\e,u_n^0)
                                                -L_\e(V_\e)=L_\e(u_n^0)-\mc{T}_\e\\
 \notag &= \int_{\partial \Sigma_\e} \pd{u_n^0}{\nu} u_n^0 \ds 
+\alpha \int_{\partial \Sigma_\e}  |u_n^0|^2 \ds-\mc{T}_\e\\
\label{proof_eigen_var_abs_5}&= -\int_{\Sigma_\e}\big( |\nabla u_n^0|^2
  + (c_\alpha-\mu_n^0)  |u_n^0|^2\big) \dx +\alpha\int_{\partial \Sigma_\e}  |u_n^0|^2\ds-\mc{T}_\e,
\end{align}
where $\nu$ is the outer normal derivative to $\Omega_\e$ on
$\partial \Sigma_\e$.  Plugging \eqref{proof_eigen_var_abs_3},
\eqref{proof_eigen_var_abs_4}, and \eqref{proof_eigen_var_abs_5} into
\eqref{proof_eigen_var_abs_2}, and taking into account that
  $c_\alpha-\mu_n^0=-\lambda_n^0$,
we obtain \eqref{eq_eigen_var_abs}.

For every $\e\in(0,1)$, let   $u_n^\e$ be  the eigenfunction of
\eqref{prob_robin_perturbed}, associated with the eigenvalue $\la_n^\e$,
chosen as in \eqref{hp_u_ne}. We consider the orthogonal projection
onto the eigenspace associated with the eigenvalue $\la_n^\e$, i.e.
\begin{equation*}
  \Pi_\e(v):= (v, u_n^\e)_{L^2(\Omega_\e)}  u_n^\e \quad \text{for
    every }   v \in L^2(\Omega_\e).
\end{equation*}
By Theorem \ref{theorem_expansion_very_abs}, \eqref{ineq_norm_qe},
\eqref{def_Te}, and Proposition
\ref{prop_norm_Ve_L2_H1}
\begin{align}
  &\|u_n^0-V_\e-\Pi_\e(u_n^0-V_\e)\|_{H^1(\Omega_\e)}^2=o\left(\mc{T}_\e\right)
    \quad \text{as } \e \to 0^+,\label{eq:eig11}\\
  \label{eq:instead}&
  \|u_n^0-\Pi_\e(u_n^0-V_\e)\|_{H^1(\Omega_\e)}^2
                      =O\left(\mc{T}_\e\right)\quad \text{as } \e \to 0^+,\\
  &\|u_n^0-\Pi_\e(u_n^0-V_\e)\|_{L^2(\Omega_\e)}^2=o\left(\mc{T}_\e\right)
    \quad \text{as } \e \to 0^+.\label{proof_eigen_var_abs_8}
\end{align}
In particular, by \eqref{eq:instead} and Proposition
\ref{prop_estimates_torsion},
\begin{equation}\label{proof_eigen_var_abs_9}
  \lim_{\e \to 0^+} \|u_n^0-\Pi_\e(u_n^0-V_\e)\|_{H^1(\Omega_\e)}=0.
\end{equation}
It follows that, thanks to \eqref{hp_u_ne} and
the simplicity of $\la_n^\e$,
\begin{equation}\label{eq:une}
u_n^\e=\frac{\Pi_\e(u_n^0-V_\e)}{\|\Pi_\e(u_n^0-V_\e)\|_{L^2(\Omega_\e)}}
\end{equation}
for all $\e>0$ sufficiently small. Hence
\begin{equation}\label{proof_eigen_var_abs_11}
  \|u_n^\e - \Pi_\e(u_n^0 -V_\e)\|^2_{H^1(\Omega_\e)}=
  \frac{\big|1-\|\Pi_\e(u_n^0-V_\e)\|_{L^2(\Omega_\e)}\big|^2}
  {\|\Pi_\e(u_n^0-V_\e)\|^2_{L^2(\Omega_\e)}}
 \|\Pi_\e(u_n^0-V_\e)\|^2_{H^1(\Omega_\e)}.
\end{equation}
Furthermore, in view of \eqref{limit_Ve_norm_0},
\eqref{proof_eigen_var_abs_8}, and the Cauchy-Schwarz inequality,
\begin{align}
  \notag\|\Pi_\e(u_n^0&-V_\e)\|^2_{L^2(\Omega_\e)}\\
  \notag &=1-\int_{\Sigma_\e} |u_n^0|^2 \dx +
           \|u_n^0-\Pi_\e(u_n^0-V_\e)\|^2_{L^2(\Omega_\e)}
           -2(u_n^0-\Pi_\e(u_n^0-V_\e),u_n^0)_{L^2(\Omega_\e)}\\
\label{proof_eigen_var_abs_12}&=1-\int_{\Sigma_\e} |u_n^0|^2 \dx +o(\mc{T}_\e^{1/2}).
\end{align}
Hence, plugging \eqref{proof_eigen_var_abs_12} in
\eqref{proof_eigen_var_abs_11} we obtain
  \begin{align*}
 \|u_n^\e - \Pi_\e(u_n^0
   -V_\e)\|^2_{H^1(\Omega_\e)}&=o(\mc{T}_\e)
   +O(\|u_n^0\|_{L^2(\Sigma_\e)}^4)  +o\left(\sqrt{\mc{T}_\e}\|u_n^0\|_{L^2(\Sigma_\e)}^2\right)\\
    &=o(\mc{T}_\e)
   +O(\|u_n^0\|_{L^2(\Sigma_\e)}^4)
  \end{align*}
as $\e \to 0^+$,
since
$\|\Pi_\e(u_n^0-V_\e)\|^2_{H^1(\Omega_\e)}=\|u_n^0\|_{H^1(\Omega)}^2+o(1)$ in view of
\eqref{proof_eigen_var_abs_9}.  By the Cauchy-Schwarz
inequality and \eqref{eq:eig11}, it then follows that 
\begin{align}
  \notag\|u_n^\e - (u_n^0 -V_\e)\|^2_{H^1(\Omega_\e)}&=\|u_n^0 -V_\e -
                                                       \Pi_\e(u_n^0 -V_\e)\|^2_{H^1(\Omega_\e)}
+\|u_n^\e -\Pi_\e(u_n^0 -V_\e)\|^2_{H^1(\Omega_\e)}\\
  \notag&\qquad -2\big(u_n^\e -\Pi_\e(u_n^0 -V_\e),u_n^0 -V_\e -
          \Pi_\e(u_n^0 -V_\e)\big)_{H^1(\Omega_\e)}\\
\label{eq:stueu0Ve}&=o(\mc{T}_\e)+O\big(\|u_n^0\|_{L^2(\Sigma_\e)}^4\big) \quad \text{as } \e \to 0^+,
\end{align}
so that \eqref{eq_eigenfun_abs_with_Ve} is proved.

To prove \eqref{eq_eigenfun_abs}, we observe that, by
\eqref{ineq_norm_qe}, \eqref{def_Te}, Proposition \ref{prop_traces},
and Proposition \ref{prop_norm_Ve_L2_H1},
\begin{equation}\label{eq:expVe}
\|V_\e\|^2_{H^1(\Omega_\e)} =
\mc{T}_\e+(1-c_\alpha)\|V_\e\|_{L^2(\Omega_\e)}^2
-\alpha\int_{\partial \Omega} V_\e^2 \ds
-\alpha\int_{\partial \Sigma_\e} V_\e^2 \ds=\mc{T}_\e+o\left(\mc{T}_\e\right)
\end{equation}
as  $\e \to 0^+$.
Moreover, from  \eqref{eq:eig11}, \eqref{eq:une},  and
 \eqref{proof_eigen_var_abs_12}, it follows that
\begin{align}
\notag   &(u_n^\e-(u_n^0-V_\e),V_\e)_{H^1(\Omega_\e)}\\
 \notag  &=
     \frac{\|\Pi_\e (u_n^0-V_\e)\|_{L^2(\Omega_\e)}-1}
     {\|\Pi_\e (u_n^0-V_\e)\|_{L^2(\Omega_\e)}}\,
     \|V_\e\|_{H^1(\Omega_\e)}^2\\
 \notag &\qquad
+     
   \frac{
   \Big(\Pi_\e (u_n^0-V_\e)-(u_n^0-V_\e)+\left(1-\|\Pi_\e
     (u_n^0-V_\e)\|_{L^2(\Omega_\e)}\right)u_n^0,
     V_\e\Big)_{H^1(\Omega_\e)}}{\|\Pi_\e (u_n^0-V_\e)\|_{L^2(\Omega_\e)}}\\
   \notag&=
\frac{\|\Pi_\e (u_n^0-V_\e)\|_{L^2(\Omega_\e)}-1}
     {\|\Pi_\e (u_n^0-V_\e)\|_{L^2(\Omega_\e)}}\,
     \big( \mc{T}_\e+o\left(\mc{T}_\e\right)\big)+
\frac{
   \Big(\Pi_\e (u_n^0-V_\e)-(u_n^0-V_\e),
     V_\e\Big)_{H^1(\Omega_\e)}}{\|\Pi_\e (u_n^0-V_\e)\|_{L^2(\Omega_\e)}}\\
\notag&\qquad +\frac{1-\|\Pi_\e
     (u_n^0-V_\e)\|_{L^2(\Omega_\e)}}{\|\Pi_\e (u_n^0-V_\e)\|_{L^2(\Omega_\e)}}
(u_n^0,   V_\e)_{H^1(\Omega_\e)}\\
 \label{eq:stl}&   =o\left(\mc{T}_\e\right)+O\left(\mc{T}_\e^{1/2}\|u_n^0\|_{L^2(\Sigma_\e)}^2\right)\quad\text{as   }
   \e\to0^+.
\end{align}
By \eqref{eq:stueu0Ve}, \eqref{eq:expVe}, and \eqref{eq:stl} we finally conclude that 
\begin{align*}
\|u_n^\e - u_n^0\|^2_{H^1(\Omega_\e)}
&=\|u_n^\e -
  (u_n^0-V_\e)\|^2_{H^1(\Omega_\e)}
+\|V_\e\|^2_{H^1(\Omega_\e)}
-2(u_n^\e - (u_n^0-V_\e),V_\e)_{H^1(\Omega_\e)}\\
&=\mc{T}_\e+o\left(\mc{T}_\e\right)+O(\norm{u_n^0}_{L^2(\Sigma_\e)}^4)
  +O\left(\mc{T}_\e^{1/2}\|u_n^0\|_{L^2(\Sigma_\e)}^2\right) \quad \text{as } \e \to 0^+,
\end{align*}
    thus completing the proof of  \eqref{eq_eigenfun_abs}.
\end{proof}

An asymptotic expansion for some of the terms appearing in
\eqref{eq_eigen_var_abs} is provided in the following proposition,
enabling us to derive Theorems
\ref{theor_asympotic_eigen_precise_u_not_0} and
\ref{theor_asympotic_eigen_precise_u_0_dim2} directly from 
expansion \eqref{eq_eigen_var_abs}.

\begin{proposition}\label{prop_asympotic_sigma_and_boundary_sigma}
Let $\ell\geq 1$ be the vanishing order of $u_n^0-u_n^0(0)$ at $0$. 
Then, as $\e\to0^+$,
\begin{align*}
  \int_{\Sigma_\e}|\nabla u_n^0|^2\dx&= \e^{N+2\ell-2}\int_{\Sigma}|\nabla P_\ell^{u_n^0,0}|^2\dx+ O(\e^{N+2\ell-1}),\\
  \int_{\Sigma_\e}|u_n^0|^2\dx&=\e^N|u_n^0(0)|^2\,|\Sigma|_N+\e^{N+2\ell}\int_{\Sigma}|P_\ell^{u_n^0,0}|^2\dx
                                +2\e^{N+\ell}\,u_n^0(0)\int_{\Sigma}P_\ell^{u_n^0,0}\dx\\
&\quad +u_n^0(0) O(\e^{\ell+1+N})+O(\e^{2\ell+1+N}),\\
  \int_{\partial \Sigma_\e}  |u_n^0|^2\ds&=\e^{N-1}|u_n^0(0)|^2
                                           \mc{H}^{N-1}(\partial
                                           \Sigma)
                                           +\e^{N+2\ell-1}\int_{\partial
                                       \Sigma}|P_\ell^{u_n^0,0}|^2\ds\\
                                     &\quad
                                       +2\e^{N+\ell-1}\,u_n^0(0)\int_{\partial
                                       \Sigma}P_\ell^{u_n^0,0}\ds+u_n^0(0) O(\e^{\ell+N})+O(\e^{2\ell+N}).
\end{align*} 
\end{proposition}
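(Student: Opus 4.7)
The plan is to combine Taylor's theorem for $u_n^0$ at the origin with the change of variables $x=\e y$, which maps $\Sigma_\e$ onto $\Sigma$ with Jacobian $\e^N$ and $\partial\Sigma_\e$ onto $\partial\Sigma$ with surface Jacobian $\e^{N-1}$. By classical regularity, $u_n^0$ is $C^\infty$ in a neighborhood of $0$; since $\ell$ is the vanishing order of $u_n^0-u_n^0(0)$ at $0$, all Taylor polynomials $P_i^{u_n^0,0}$ with $1\le i\le \ell-1$ vanish identically, so Taylor's theorem with remainder yields
\begin{equation*}
u_n^0(x)=u_n^0(0)+P_\ell^{u_n^0,0}(x)+R(x), \qquad \nabla u_n^0(x)=\nabla P_\ell^{u_n^0,0}(x)+\widetilde R(x),
\end{equation*}
with $|R(x)|\le C|x|^{\ell+1}$ and $|\widetilde R(x)|\le C|x|^{\ell}$ for $x$ in a fixed neighborhood of $0$ containing $\Sigma_\e$ for $\e$ small.

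The key algebraic fact I exploit is the homogeneity: $P_\ell^{u_n^0,0}(\e y)=\e^\ell P_\ell^{u_n^0,0}(y)$ and $\nabla P_\ell^{u_n^0,0}(\e y)=\e^{\ell-1}\nabla P_\ell^{u_n^0,0}(y)$. After scaling, the Dirichlet integral becomes
\begin{equation*}
\int_{\Sigma_\e}|\nabla u_n^0|^2\dx=\e^N\int_\Sigma \bigl|\e^{\ell-1}\nabla P_\ell^{u_n^0,0}(y)+\widetilde R(\e y)\bigr|^2\,dy,
\end{equation*}
and expanding the square together with the uniform bound $|\widetilde R(\e y)|\le C\e^{\ell}$ on $\Sigma$ gives the first asymptotic formula, with cross and remainder terms absorbed in $O(\e^{N+2\ell-1})$. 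The two $L^2$-type integrals are handled analogously, using the appropriate Jacobian: expanding
\begin{equation*}
|u_n^0(\e y)|^2=|u_n^0(0)|^2+2u_n^0(0)\,\e^\ell P_\ell^{u_n^0,0}(y)+\e^{2\ell}|P_\ell^{u_n^0,0}(y)|^2+\textnormal{remainder},
\end{equation*}
multiplying by $\e^N$ or $\e^{N-1}$, and integrating over $\Sigma$ or $\partial\Sigma$ produces the three explicit leading contributions listed in the statement, noting $\int_\Sigma 1\,dy=|\Sigma|_N$ and $\int_{\partial\Sigma}1\,ds=\mc{H}^{N-1}(\partial\Sigma)$.

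The only delicate step is the bookkeeping of the error terms. The cross product $2u_n^0(0)R(\e y)$ contributes $u_n^0(0)\,O(\e^{\ell+1})$ pointwise on $\Sigma$, while both $2\e^\ell P_\ell^{u_n^0,0}(y)R(\e y)$ and $R(\e y)^2$ contribute $O(\e^{2\ell+1})$ pointwise, with constants uniform on the bounded set $\Sigma$. After multiplication by the appropriate Jacobian and integration, these generate precisely the remainders $u_n^0(0)\,O(\e^{\ell+1+N})+O(\e^{2\ell+1+N})$ in the volume integral and $u_n^0(0)\,O(\e^{\ell+N})+O(\e^{2\ell+N})$ in the surface integral. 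I do not anticipate any genuine obstacle; this is a routine Taylor-and-scaling argument, and the main point requiring care is keeping the $u_n^0(0)$ contributions separate from the purely polynomial ones, so that the expansion reduces correctly both when $u_n^0(0)=0$ (as needed for Theorems \ref{theor_asympotic_eigen_precise_u_0_dim_big} and \ref{theor_asympotic_eigen_precise_u_0_dim2}) and when $u_n^0(0)\neq 0$ (as needed for Theorem \ref{theor_asympotic_eigen_precise_u_not_0}).
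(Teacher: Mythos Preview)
Your proposal is correct and follows exactly the same approach as the paper: Taylor-expand $u_n^0$ and $\nabla u_n^0$ at $0$ using the vanishing order $\ell$, then perform the change of variables $x=\e y$ and track the remainder terms. The paper's proof is in fact just a two-line sketch (``the desired estimates follow directly from a change of variables''), so your version simply fills in the bookkeeping that the authors leave to the reader.
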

\begin{proof}
By standard elliptic regularity theory, $u_n^0$ is a smooth function; hence
\begin{align*}
u_n^0(x)-u_n^0(0)=P_\ell^{u_n^0,0}(x)+O(|x|^{\ell+1}) \quad \text{and}\quad 
\nabla u_n^0(x)=\nabla P_\ell^{u_n^0,0}(x)+O(|x|^\ell)\quad\text{as }|x|\to0.
\end{align*}
Then, the desired estimates follow directly from
a change of variables.
\end{proof}

\begin{proof}[Proof of Theorems
  \ref{theor_asympotic_eigen_precise_u_not_0} and
  \ref{theor_asympotic_eigen_precise_u_0_dim2}] Theorems
  \ref{theor_asympotic_eigen_precise_u_not_0} and
  \ref{theor_asympotic_eigen_precise_u_0_dim2} are now a simple
  consequence of Theorem \ref{theo_eigen_var_abs}, Proposition
  \ref{prop_estimates_torsion}, and Proposition
  \ref{prop_asympotic_sigma_and_boundary_sigma}.
\end{proof}

\section{Blow-up analysis for $N\ge 3 $}\label{sec_blow_up}

In this section,  we use a blow-up analysis to  derive the asymptotics
of $\mc{T}_\e$, as $\e \to 0^+$, for $N \ge 3$.

The following proposition describes the asymptotic behavior of $u_n^0$
near $x_0=0\in\Omega$; it directly follows from analyticity of
$u_n^0$.
\begin{proposition}\label{prop_vanish_u}
  If $u_n^0$ vanishes of order $k\geq 0$ at $x_0=0\in\Omega$, then,
  for every $R>0$,
  \begin{align*}
    r^{-k}u_n^0(rx)\rightarrow
    P_k^{u_n^0,0}(x) \quad \text{ uniformly in } \overline{B_R} \text{
    and in } H^1(B_R)
 \end{align*}  
 as $r\rightarrow 0^+$; furthermore, $P_k^{u_n^0,0}$ is a harmonic and
 homogeneous polynomial of degree $k$.

 If $u_n^0-u_n^0(0)$ vanishes
 of order $\ell\geq 1$ at $x_0=0\in\Omega$, then, for every $R>0$,
\begin{align*}
  \frac{u_n^0(rx)-u_n^0(0)}{r^\ell}\rightarrow P_\ell^{u_n^0,0}(x) \quad
  \text{ uniformly in } \overline{B_R}  \text{ and in } H^1(B_R) 
\end{align*} 
and
\begin{align*}
 r^{-\ell+1} \nabla u_n^0(rx)\rightarrow \nabla
  P_\ell^{u_n^0,0}(x)
  \quad \text{ uniformly in } \overline{B_R}  \text{ and in } H^1(B_R;\R^N) 
\end{align*} 
as $r\rightarrow 0^+$.
\end{proposition}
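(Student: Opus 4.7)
The plan is to exploit the fact that $u_n^0$ is a classical (indeed analytic) solution of the eigenvalue equation $-\Delta u_n^0=\lambda_n^0 u_n^0$ in $\Omega$, so that $u_n^0 \in C^\infty$ in a neighborhood of $0 \in \Omega$ by standard interior elliptic regularity (bootstrap, or alternatively the real-analyticity of solutions to elliptic equations with analytic coefficients). Everything will then reduce to Taylor expansion with remainder and to the rescaling $y = rx$.

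First I would prove the two uniform convergence statements. Under the vanishing order hypothesis for part one, all Taylor coefficients of $u_n^0$ at $0$ of order strictly less than $k$ vanish, so Taylor's formula with remainder gives
\begin{equation*}
  u_n^0(y) = P_k^{u_n^0,0}(y) + O(|y|^{k+1})
  \quad \text{as } |y|\to 0^+.
\end{equation*}
Substituting $y=rx$ with $x \in \overline{B_R}$ and using that $P_k^{u_n^0,0}$ is homogeneous of degree $k$ (immediate from definition \eqref{def_P_ui}), I would obtain
\begin{equation*}
  r^{-k}u_n^0(rx) = P_k^{u_n^0,0}(x) + O(r\, |x|^{k+1}),
\end{equation*}
which yields uniform convergence on $\overline{B_R}$ as $r\to 0^+$. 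For part two, I would apply the same argument to the function $u_n^0-u_n^0(0)$, which vanishes of order $\ell$ at $0$ and has the same degree-$\ell$ Taylor polynomial $P_\ell^{u_n^0,0}$ as $u_n^0$ itself.

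Next I would upgrade the uniform convergence to convergence in $H^1(B_R)$. Since $u_n^0$ is smooth, differentiating the Taylor expansion and using the vanishing orders gives
\begin{equation*}
  \nabla u_n^0(y) = \nabla P_k^{u_n^0,0}(y) + O(|y|^k)\quad (k\geq 1),
\end{equation*}
and analogously for the situation of part two. Computing $\nabla_x[r^{-k}u_n^0(rx)] = r^{-k+1}\nabla u_n^0(rx)$ and $\nabla_x[r^{-\ell+1}\nabla u_n^0(rx)] = r^{-\ell+2}D^2 u_n^0(rx)$, a Taylor expansion of $\nabla u_n^0$ and $D^2 u_n^0$ at $0$ produces uniform remainders of the form $O(r\,|x|^{\cdot})$, which on the bounded domain $B_R$ imply the desired $L^2$ (hence $H^1$) convergence. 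The case $k=0$ of part one is dealt with directly by continuity, since $P_0^{u_n^0,0}$ is the constant $u_n^0(0)$.

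Finally I would verify that $P_k^{u_n^0,0}$ is harmonic. Homogeneity being built into \eqref{def_P_ui}, only harmonicity requires an argument. For $k\in\{0,1\}$, $\Delta P_k^{u_n^0,0}\equiv 0$ trivially. For $k\geq 2$, I use the eigenvalue equation: the right-hand side of $\Delta u_n^0 = -\lambda_n^0\, u_n^0$ vanishes of order $k$ at $0$, so the Taylor polynomial of $\Delta u_n^0$ at $0$ of degree $k-2$ is identically zero. Since this polynomial equals $\Delta P_k^{u_n^0,0}$ by commutation of $\Delta$ with extraction of homogeneous Taylor parts, I conclude $\Delta P_k^{u_n^0,0}\equiv 0$.

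There is no real obstacle here: the proposition is essentially a quantitative restatement of the Taylor expansion, and the only non-bookkeeping step is the harmonicity of the leading polynomial, which is immediate from matching homogeneous degrees in the eigenvalue equation. The mildly delicate point is keeping track of which derivatives one needs to expand (first derivatives for part one's $H^1$ statement, second derivatives for the gradient-in-$H^1$ statement of part two) and ensuring the corresponding Taylor remainders carry the right powers of $r$.
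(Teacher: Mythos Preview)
Your proposal is correct and follows the same approach as the paper, which simply remarks that the proposition ``directly follows from analyticity of $u_n^0$'' without further detail. You have supplied exactly the Taylor-expansion bookkeeping and the harmonicity argument (via matching homogeneous degrees in $-\Delta u_n^0=\lambda_n^0 u_n^0$) that the paper leaves implicit.
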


The following Hardy-type inequality serves as an essential tool for
conducting blow-up analysis, especially in the identification of limit
profiles. We refer to \cite[Lemma~5.3]{FLO_Neumann} for a proof.
\begin{lemma}\label{Hardy}
  Let $N\geq 3$ and $E$ be an open Lipschitz subset of $\RN$ such that
  $\overline{E}\subset B_{R_0}$ and
$B_{R_0}\setminus \overline{E}$ is connected,
  for some $R_0>0$. Then there exists
  $C_H= C_H(N, E)>0$ such that
\begin{align*}
  \int_{B_R\setminus E}\frac{u^2}{|x|^2} \dx \leq C_H
  \left(\int_{B_R\setminus E} |\nabla u|^2\dx +\frac{1}{R^2}
  \int_{B_R\setminus E} u^2\dx\right)
\end{align*}
for all $u\in H^1(B_R\setminus \overline{E})$ and $R>2R_0$. Moreover,
for all $u\in C_c^\infty (\RN \setminus E)$,
there holds
\begin{align*}
\int_{\RN\setminus E}\frac{u^2}{|x|^2} \dx \leq C_H\int_{\RN\setminus E} |\nabla u|^2\dx.
\end{align*}
\end{lemma}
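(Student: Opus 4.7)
My plan is to handle the two inequalities separately.

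For the global inequality, since the support of $u \in C_c^\infty(\R^N\setminus E)$ lies at positive distance from $\overline E$, extending by zero produces $\tilde u \in C_c^\infty(\R^N)$; the classical Hardy inequality on $\R^N$ for $N\ge 3$ then yields
\begin{equation*}
\int_{\R^N}\frac{\tilde u^2}{|x|^2}\dx \le \frac{4}{(N-2)^2}\int_{\R^N}|\nabla \tilde u|^2\dx,
\end{equation*}
which restricts to the desired bound on $\R^N\setminus E$ since $\tilde u$ vanishes on $E$, giving $C_H = 4/(N-2)^2$.

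For the local inequality on $B_R\setminus E$, my plan is to reduce via scaling to a uniform Hardy–Poincar\'e estimate on the unit ball. Setting $v(y) := u(Ry)$ for $y\in B_1 \setminus F_R$, where $F_R := E/R \subset B_{1/2}$ whenever $R > 2R_0$, the change of variables $x = Ry$ transforms the target inequality into
\begin{equation*}
\int_{B_1 \setminus F_R}\frac{v^2}{|y|^2}\,dy \le C_H\left(\int_{B_1 \setminus F_R}|\nabla v|^2\,dy + \int_{B_1 \setminus F_R} v^2\,dy\right),
\end{equation*}
which I need to establish with $C_H$ uniform in $R > 2R_0$. The natural route is to construct a uniform Sobolev extension $\tilde v \in H^1(B_1)$ of $v$, with $\|\tilde v\|_{H^1(B_1)} \le C_E\|v\|_{H^1(B_1\setminus\overline{F_R})}$ and $C_E$ independent of $R$; applying the classical Hardy inequality on $B_1$ to $\tilde v$ then yields the desired bound, since $\int_{B_1\setminus F_R}v^2/|y|^2\,dy \le \int_{B_1}\tilde v^2/|y|^2\,dy \le C_N\|\tilde v\|^2_{H^1(B_1)}$.

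The main obstacle is constructing an $R$-uniform extension as the hole $F_R$ shrinks to a point. I would do this by combining a cutoff-and-averaging scheme with a Poincar\'e inequality on a collar of $\partial F_R$: subtract from $v$ its mean value $\bar v$ on a tubular neighborhood of $\partial F_R$ of width comparable to $R_0/R$, reflect the zero-mean part across $\partial F_R$ inside $F_R$ using local Lipschitz charts (whose reflection operator has a scale-invariant $H^1$-bound owing to the Lipschitz regularity of $\partial E$ being preserved under rescaling), and take the extension to be $\bar v$ plus this reflected zero-mean part, glued by a smooth cutoff. The delicate point is that the gluing cutoff introduces a factor of order $R/R_0$ in the gradient, but this is absorbed exactly by the Poincar\'e constant on the collar (of order $R_0/R$), producing an $R$-independent bound. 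The uniform Poincar\'e constant on the (rescaled) collar holds because the collar is a homothetic copy of a fixed collar of $\partial E$ inside $B_{R_0}\setminus\overline E$, on which Poincar\'e's inequality is valid thanks to the standing hypothesis that $B_{R_0}\setminus\overline E$ is connected.
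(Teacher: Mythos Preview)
Your argument for the global inequality has a gap: in this paper's usage, $C_c^\infty(\R^N\setminus E)$ does \emph{not} denote functions with compact support in the open set $\R^N\setminus\overline E$. Since $E$ is open, $\R^N\setminus E$ is closed and contains $\partial E$; the intended space consists of functions smooth up to $\partial E$ and vanishing only outside some large ball. This reading is forced by the sequel: $\mathcal D^{1,2}(\R^N\setminus\Sigma)$ is defined as the completion of $C_c^\infty(\R^N\setminus\Sigma)$ under the gradient norm, and its nonzero element $\widetilde V$ in Proposition~\ref{prop_min_tilde_J} satisfies an inhomogeneous Neumann condition on $\partial\Sigma$, hence carries a nonzero trace there. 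Extending such $u$ by zero across $\partial E$ therefore does not produce a function in $H^1(\R^N)$, and the classical Hardy inequality on $\R^N$ is unavailable. Your constant $C_H=4/(N-2)^2$ is accordingly not justified.

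The repair is immediate once the local inequality is in hand: for $u$ with $\mathop{\rm supp}u\subset B_M$, apply the local estimate with any $R>\max(2R_0,M)$ and let $R\to\infty$, which kills the term $R^{-2}\int u^2$. Alternatively, extend $u$ into $E$ using the gradient-preserving harmonic fill-in of Lemma~\ref{l:extension_1} and then apply Hardy on all of $\R^N$.

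Your scaling-plus-extension approach to the local inequality is sound. The $R$-uniform extension operator you sketch is exactly the content of Proposition~\ref{prop_extension} (take $\Omega=B_1$, $x_0=0$, $\Sigma=E$ rescaled into $B_{r_0}$, and $\e$ of order $R_0/R$); the proof in Appendix~\ref{sec_appendix-extension} uses harmonic fill-in together with the Poincar\'e--Wirtinger inequality on the connected set $B_{r_0}\setminus\overline\Sigma$, rather than reflection across $\partial\Sigma$, but either construction delivers the needed uniform bound and you may simply cite that proposition. The paper itself does not prove this lemma, referring instead to \cite[Lemma~5.3]{FLO_Neumann}.
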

For every open
Lipschitz set $E\subset \RN$
 such that
  $\overline{E}\subset B_{R_0}$ and
$B_{R_0}\setminus \overline{E}$ is connected
  for some $R_0>0$,  let us consider the functional space
$\mathcal{D}^{1,2}(\RN\setminus E)$, defined as the completion of
$C_c^\infty(\RN \setminus E)$ with respect to the norm
\begin{align*}
  \|u\|_{\mathcal{D}^{1,2}(\RN\setminus E)}:=
  \bigg(\int_{\RN\setminus E}|\nabla u|^2\dx\bigg)^{\frac{1}{2}}.
\end{align*}
The Hardy inequality provided by the above lemma implies that
\begin{equation*}
  u\mapsto  \left(\int_{\RN\setminus E}\bigg(|\nabla u|^2+\frac{u^2}{|x|^2}\bigg)\dx\right)^{\frac{1}{2}},
  \end{equation*}
  is an equivalent norm on 
$\mathcal{D}^{1,2}(\RN\setminus E)$, which can be characterized also
as 
\begin{align*}
\mathcal{D}^{1,2}(\RN\setminus E)=\left\{u\in W^{1,1}_{\rm loc}(\RN
  \setminus E):\,
  \int_{\RN\setminus E} \bigg(|\nabla u|^2+\frac{u^2}{|x|^2}\bigg)\dx<\infty\right\}.	
\end{align*}
Let $h$ and $P_h^{u_n^0,0}$ 
be as in \eqref{def_h}  and
\eqref{def_P_ui}, respectively, with $x_0=0\in\Omega$.
We define the functional $\widetilde{J}: \mathcal{D}^{1,2}(\RN
\setminus \Sigma) \to \R$ as  
\begin{equation*}
  \widetilde{J}(v):= \frac{1}{2}\int_{\R^N\setminus \Sigma}|\nabla
  v|^2\dx
  -\int_{\partial \Sigma}\big(\partial_\nu P_h^{u_n^0,0} +\alpha\, u_n^0(0)\big) v \ds.
\end{equation*}
\begin{proposition}\label{prop_min_tilde_J}
There exists a unique $\widetilde{V} \in
\mathcal{D}^{1,2}(\R^N\setminus \Sigma)$ such that
\begin{equation*}
  \widetilde{J}(\widetilde{V})=
  \min\{\widetilde{J}(v): v \in \mathcal{D}^{1,2}(\R^N\setminus \Sigma)\}.
\end{equation*}
Furthermore, $\widetilde{V}\not\equiv0$ and $\widetilde{V}$ is the
unique $\mathcal{D}^{1,2}(\R^N\setminus \Sigma)$-function satisfying 
\begin{equation}\label{swrtel-rp}
  \int_{\R^N\setminus \Sigma}\nabla \widetilde{V}\cdot \nabla \varphi
  \dx    =
  \int_{\partial \Sigma}\left(\partial_\nu P_h^{u_n^0,0} +\alpha\,
    u_n^0(0)\right)\varphi \ds
\end{equation}
for all $\varphi \in \mathcal{D}^{1,2}(\R^N\setminus  \Sigma)$.
\end{proposition}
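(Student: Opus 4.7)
The plan is to write $\widetilde{J}(v)=\tfrac{1}{2}\|v\|^2-L(v)$, where $\|v\|^2:=\int_{\R^N\setminus\Sigma}|\nabla v|^2\dx$ is the canonical Hilbert norm on $\mathcal{D}^{1,2}(\R^N\setminus\Sigma)$ and
\[
L(v):=\int_{\partial\Sigma}\bigl(\partial_\nu P_h^{u_n^0,0}+\alpha u_n^0(0)\bigr)v\ds,
\]
and then to apply the Riesz representation theorem. Once $L$ is shown to be continuous on $\mathcal{D}^{1,2}(\R^N\setminus\Sigma)$, this yields a unique $\widetilde{V}$ satisfying \eqref{swrtel-rp}; by strict convexity of $\widetilde{J}$, the same $\widetilde{V}$ is simultaneously its unique minimizer (and testing \eqref{swrtel-rp} with $\varphi=\widetilde{V}$ gives $\widetilde{J}(\widetilde{V})=-\tfrac{1}{2}\|\widetilde{V}\|^2$).

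The principal obstacle is the continuity of $L$, which reduces to a trace estimate $\|v\|_{L^2(\partial\Sigma)}\leq C\|v\|_{\mathcal{D}^{1,2}}$. Fixing $R>0$ with $\overline{\Sigma}\subset B_R$, the global Hardy inequality in Lemma \ref{Hardy} (extended by density from $C^\infty_c(\R^N\setminus\Sigma)$ to $\mathcal{D}^{1,2}(\R^N\setminus\Sigma)$) gives
\[
\int_{B_R\setminus\Sigma}v^2\dx\leq R^2\int_{\R^N\setminus\Sigma}\frac{v^2}{|x|^2}\dx\leq R^2 C_H\|v\|^2_{\mathcal{D}^{1,2}},
\]
so that $v|_{B_R\setminus\overline{\Sigma}}\in H^1(B_R\setminus\overline{\Sigma})$ with norm controlled by $\|v\|_{\mathcal{D}^{1,2}}$; the classical Sobolev trace theorem on the bounded Lipschitz set $B_R\setminus\overline{\Sigma}$ then yields the desired estimate. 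Combined with $\partial_\nu P_h^{u_n^0,0}+\alpha u_n^0(0)\in L^\infty(\partial\Sigma)$, this makes $L$ continuous.

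Finally, to prove $\widetilde{V}\not\equiv 0$ it suffices to exhibit $\varphi_0\in\mathcal{D}^{1,2}(\R^N\setminus\Sigma)$ with $L(\varphi_0)\neq 0$, since then $\widetilde{J}(t\varphi_0)=\tfrac{t^2}{2}\|\varphi_0\|^2-tL(\varphi_0)<0$ for small $t$ of suitable sign, forcing $\widetilde{V}\neq 0$. Fix $\eta\in C^\infty_c(\R^N)$ with $\eta\equiv 1$ on a neighbourhood of $\overline{\Sigma}$; in both cases below $\varphi_0\in\mathcal{D}^{1,2}(\R^N\setminus\Sigma)$, since it is smooth, compactly supported, and $|x|^{-2}$ is locally integrable on $\R^N$ for $N\geq 3$. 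If $u_n^0(0)\neq 0$, so $h=1$ and $P_1^{u_n^0,0}(x)=\nabla u_n^0(0)\cdot x$, I would take $\varphi_0:=\eta$; recalling that $\nu$ denotes the inner normal to $\Sigma$, the divergence theorem gives $\int_{\partial\Sigma}\partial_\nu P_1^{u_n^0,0}\ds=-\int_\Sigma\Delta P_1^{u_n^0,0}\dx=0$, hence $L(\eta)=\alpha u_n^0(0)\mathcal{H}^{N-1}(\partial\Sigma)\neq 0$ by \eqref{eq:alpha-non-zero}. If instead $u_n^0(0)=0$, so $h=k\geq 1$ and $P_k^{u_n^0,0}$ is a nontrivial homogeneous harmonic polynomial, I would take $\varphi_0:=\eta P_k^{u_n^0,0}$; integration by parts in $\Sigma$ together with $\Delta P_k^{u_n^0,0}=0$ yields $L(\eta P_k^{u_n^0,0})=\int_{\partial\Sigma}(\partial_\nu P_k^{u_n^0,0})P_k^{u_n^0,0}\ds=-\int_\Sigma|\nabla P_k^{u_n^0,0}|^2\dx<0$, completing the proof.
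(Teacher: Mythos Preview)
Your proof is correct and follows essentially the same approach as the paper. You provide more detail on the existence and uniqueness part (continuity of $L$ via the Hardy inequality and the Sobolev trace theorem, then Riesz representation), which the paper dismisses as ``standard variational methods''; for the nontriviality $\widetilde{V}\not\equiv 0$, you exhibit explicit test functions with $L(\varphi_0)\neq 0$, whereas the paper argues by contradiction that the boundary datum $\partial_\nu P_h^{u_n^0,0}+\alpha u_n^0(0)$ cannot vanish identically on $\partial\Sigma$, but both arguments rest on exactly the same divergence-theorem and integration-by-parts identities in the two cases $u_n^0(0)=0$ and $u_n^0(0)\neq 0$.
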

\begin{proof}
  The proof of the existence of a unique minimizer $\widetilde{V}$ can
  be obtained by standard variational methods. To verify that
  $\widetilde{V}\not\equiv0$, we argue by contradiction. If
  $\widetilde{V}\equiv0$, \eqref{swrtel-rp} would imply that
  $\partial_\nu P_h^{u_n^0,0} +\alpha u_n^0(0)=0$ on
  $\partial\Sigma$. If $u_n^0(0)=0$, then $\Delta P_h^{u_n^0,0}=0$ and
  \begin{equation*}
    0=\int_{\partial\Sigma}(\partial_\nu
  P_h^{u_n^0,0}) P_h^{u_n^0,0}\ds=-\int_\Sigma|\nabla
  P_h^{u_n^0,0}|^2\dx;
\end{equation*}
this would imply that $P_h^{u_n^0,0}$ is constant, contradicting the
fact that $P_h^{u_n^0,0}$ is a polynomial of degree $h\geq1$.
On the other hand, if  $u_n^0(0)\neq0$, then $h=1$ and
$P_h^{u_n^0,0}(x)=\nabla u_n^0(0)\cdot x$; hence, by the Divergence Theorem,
  \begin{equation*}
    \alpha u_n^0(0) \mc{H}^{N-1}(\partial \Sigma)=-
    \int_{\partial\Sigma}\nabla u_n^0(0)\cdot \nu\ds=0,
  \end{equation*}
  giving rise to a contradiction in view of \eqref{eq:alpha-non-zero}.
\end{proof}
  We observe that \eqref{swrtel-rp} is the weak formulation of
\eqref{srtel-rp}.  We are now ready to state the main result of this
section.
\begin{theorem}\label{theorem_blow_up}
  Let $h$ and $P_h^{u_n^0,0}$ be as in \eqref{def_h} and
  \eqref{def_P_ui}, respectively, with $x_0=0\in\Omega$.  If $N\geq 3$
  and $\widetilde{V}$ is the function introduced in Proposition
  \ref{prop_min_tilde_J}, then
\begin{align*}
  \lim_{\e \to 0^+}\e^{-N-2h+2}\te =
  \tr=\int_{\partial \Sigma}\big(\partial_\nu P_h^{u_n^0,0}
  +\alpha\, u_n^0(0)\big)\widetilde{V} \ds,
\end{align*}
where $\te$ is defined in \eqref{def_Te}.
 Furthermore, if $V_\e\in H^1(\Omega_\e)$ is as in Proposition
 \ref{propo_tau_domain} and 
\begin{equation}\label{blow_ip_scale}
	\widetilde{V}_\e (x):=\e^{-h}V_\e(\e x), \quad x\in
        \left(\tfrac{1}{\e}
          \Omega \right)\setminus \Sigma,
      \end{equation}
then
\begin{equation*}
  \widetilde{V}_\e \rightarrow \widetilde{V} \quad \text{ in }
  H^1 (B_R\setminus \overline{\Sigma}) \quad \text{as } \e\rightarrow 0^+,
\end{equation*}
for all $R>0$ such that $\overline{\Sigma}\subset B_R$.
\end{theorem}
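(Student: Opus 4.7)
The plan is to perform a blow-up at the hole: rescale $V_\e$ to a function $\widetilde V_\e$ on the expanding domain $(\Omega/\e)\setminus\overline\Sigma$, identify its limit as the function $\widetilde V$ from Proposition~\ref{prop_min_tilde_J}, and then read off the asymptotics of $\te$ from the identity $\te = L_\e(V_\e)$ via a change of variables. Setting $\widetilde V_\e(y):=\e^{-h}V_\e(\e y)$ and rewriting the equation of Proposition~\ref{propo_tau_domain} in the new variable, one checks that $\widetilde V_\e$ satisfies $-\Delta \widetilde V_\e + c_\alpha\e^2\widetilde V_\e = 0$ in $(\Omega/\e)\setminus\overline\Sigma$ with a rescaled Robin condition on $\partial(\Omega/\e)$ and
\[
\partial_\nu \widetilde V_\e + \alpha\e\,\widetilde V_\e = \e^{1-h}\bigl(\partial_\nu u_n^0(\e y)+\alpha u_n^0(\e y)\bigr) \quad \text{on } \partial\Sigma.
\]
By Proposition~\ref{prop_vanish_u}, the right-hand side converges uniformly on $\partial\Sigma$ to $\partial_\nu P_h^{u_n^0,0}+\alpha u_n^0(0)$ as $\e\to 0^+$ (using $h=k$ when $u_n^0(0)=0$, and $h=1$ otherwise, so that $\partial_\nu P_1^{u_n^0,0}=\nabla u_n^0(0)\cdot\nu$).

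The key a priori estimate is the uniform gradient bound
\[
\int_{(\Omega/\e)\setminus\overline\Sigma}|\nabla\widetilde V_\e|^2\dx = \e^{-(N+2h-2)}\int_{\Omega_\e}|\nabla V_\e|^2\dx \leq C,
\]
which follows from Proposition~\ref{prop_estimates_torsion} together with \eqref{ineq_norm_qe} and \eqref{def_Te}. Combining this with Lemma~\ref{Hardy} applied to a cutoff of $\widetilde V_\e$ supported in a ball $B_{2R}\subset\Omega/\e$ (valid for $\e$ small), I would derive a uniform $H^1$ bound on $B_R\setminus\overline\Sigma$ for every $R>0$ with $\overline\Sigma\subset B_R$. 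The annular remainder produced by the cutoff is absorbed by testing the rescaled equation against $\widetilde V_\e$ on an annulus and exploiting the smallness of the coefficients $c_\alpha\e^2$ and $\alpha\e$.

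Extracting a subsequential weak limit $\widetilde V_\e \rightharpoonup W$ in $H^1_{\rm loc}(\R^N\setminus\overline\Sigma)$, I pass to the limit in the weak formulation: the compactness of the trace $H^1(B_R\setminus\overline\Sigma)\to L^2(\partial\Sigma)$ together with the uniform convergence of the boundary data yields
\[
\int_{\R^N\setminus\Sigma}\nabla W\cdot\nabla\varphi\dx = \int_{\partial\Sigma}\bigl(\partial_\nu P_h^{u_n^0,0}+\alpha u_n^0(0)\bigr)\varphi\ds
\]
for every $\varphi\in C_c^\infty(\R^N\setminus\Sigma)$. Fatou's lemma together with the uniform gradient bound places $W$ in $\mathcal D^{1,2}(\R^N\setminus\Sigma)$, and by density the identity extends to all test functions in this space; by the uniqueness part of Proposition~\ref{prop_min_tilde_J}, $W=\widetilde V$, and the Urysohn subsequence principle lifts convergence to the full family. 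Strong $H^1$ convergence on $B_R\setminus\overline\Sigma$ is then obtained by testing the rescaled equation against $\widetilde V_\e$ itself and \eqref{swrtel-rp} against $\widetilde V$ and passing to the limit, which matches the $L^2$ norms of the gradients.

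The asymptotics of $\te$ follow quickly from strong convergence: the change of variables $x=\e y$ in $\te = L_\e(V_\e)$ gives
\[
\te = \e^{N+2h-2}\int_{\partial\Sigma}\e^{1-h}\bigl(\partial_\nu u_n^0(\e y)+\alpha u_n^0(\e y)\bigr)\widetilde V_\e(y)\ds,
\]
and trace convergence of $\widetilde V_\e$ to $\widetilde V$ on $\partial\Sigma$, combined with the uniform convergence of the bracket established in the first paragraph, yields $\e^{-(N+2h-2)}\te\to\tr$. The hard part is the uniform $H^1$ bound of the second paragraph: because $\widetilde V_\e$ carries non-trivial Robin data on the outer boundary $\partial(\Omega/\e)$ it cannot simply be extended by zero to $\R^N\setminus\overline\Sigma$, and Lemma~\ref{Hardy} in its global form only controls $\|u/|y|\|_{L^2}$ by $\|\nabla u\|_{L^2}$; closing the $L^2$ loop on bounded sets requires the delicate cutoff-plus-energy argument sketched above, hinging on the smallness of the zeroth-order and Robin coefficients produced by the scaling.
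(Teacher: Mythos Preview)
Your overall strategy matches the paper's proof: rescale, obtain a uniform local $H^1$ bound, extract a weak limit, identify it via the equation and uniqueness, upgrade to strong convergence, and read off the limit of $\te$ by change of variables. The gap is precisely in the step you flag as ``the hard part'': your cutoff-plus-energy sketch for the uniform $H^1$ bound does not close. The annular remainder from the cutoff is of order $R^{-2}\int_{B_{2R}\setminus B_R}\widetilde V_\e^2$, and testing the rescaled equation on the annulus puts $c_\alpha\e^2\int_A\widetilde V_\e^2$ on the coercive side; the \emph{smallness} of $c_\alpha\e^2$ is exactly what prevents this from yielding a bound on $\int_A\widetilde V_\e^2$. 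What scaling gives you is only a bound on $\e^2\int_{(\Omega/\e)\setminus\Sigma}\widetilde V_\e^2$, which is not enough on a fixed annulus.

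The paper avoids the cutoff entirely by applying the \emph{local} form of Lemma~\ref{Hardy} (the first inequality there, with the correction term $R^{-2}\int u^2$) directly to $\widetilde V_\e$ on the large ball $B_{r_0/\e}\setminus\Sigma$. With $R=r_0/\e$ the correction becomes $\frac{\e^2}{r_0^2}\int_{B_{r_0/\e}\setminus\Sigma}\widetilde V_\e^2$, which after undoing the change of variables equals $\e^{-N-2h+2}r_0^{-2}\int_{B_{r_0}\setminus\Sigma_\e}V_\e^2$ and is bounded by Proposition~\ref{prop_estimates_torsion} and \eqref{ineq_norm_qe}. This yields a bound on $\int_{B_R\setminus\Sigma}\big(|\nabla\widetilde V_\e|^2+|x|^{-2}\widetilde V_\e^2\big)$ that is uniform in both $R$ and $\e$, which in one stroke gives the local $H^1$ bound and (by weak lower semicontinuity, letting $R\to\infty$) places the weak limit in $\mathcal D^{1,2}(\R^N\setminus\Sigma)$.

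A smaller point: for the upgrade to strong $H^1(B_R\setminus\overline\Sigma)$ convergence, the paper tests the \emph{difference} of the equations for $\widetilde V_\e$ and $\widetilde V$ against $\widetilde V_\e-\widetilde V$ on $B_R\setminus\Sigma$, and handles the resulting boundary term on $\partial B_R$ by first proving uniform $H^2$-boundedness of $\widetilde V_\e$ on a fixed annulus away from $\Sigma$ (interior elliptic regularity for $-\Delta\widetilde V_\e=-c_\alpha\e^2\widetilde V_\e$), so that $\{\partial_\nu\widetilde V_\e\}$ is bounded in $L^2(\partial B_R)$. Your norm-matching proposal runs into the same $\partial B_R$ boundary term and would need the same ingredient; as stated it compares $\int_{B_R\setminus\Sigma}|\nabla\widetilde V_\e|^2$ with $\int_{\R^N\setminus\Sigma}|\nabla\widetilde V|^2$, which are not the right quantities to match.
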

\begin{proof}
  Let $r_0>0$ be as in \eqref{def_Sigma} with $x_0=0$. Let $R>r_0$ and
  $\e<\frac{r_0}{2R}$. As ${\frac{r_0}{\e}}>2r_0$ and
  $R<\frac{r_0}{2\e}<\frac{r_0}{\e}$, applying Lemma~\ref{Hardy} to
  the function $\widetilde{V}_\e$, by a change of variable we have
\begin{align*}
  \int_{B_R\setminus \Sigma}\bigg(& |\nabla \widetilde{V}_\e |^2
    +
    \frac{\widetilde{V}_\e^2}{|x|^2}\bigg)\dx \leq
    \int_{B_{\frac{r_0}{\e}}\setminus \Sigma}\bigg( |\nabla
    \widetilde{V}_\e |^2
    + \frac{\widetilde{V}_\e^2}{|x|^2}\bigg)\dx\\
  &\leq \int_{B_{\frac{r_0}{\e}}\setminus \Sigma} |\nabla
    \widetilde{V}_\e |^2\dx
    +C_H\bigg(\int_{B_{\frac{r_0}{\e}}\setminus \Sigma}|\nabla
    \widetilde{V}_\e|^2\dx +
    \frac{\e^2}{r_0^2}\int_{B_{\frac{r_0}{\e}}\setminus
    \Sigma}\widetilde{V}_\e^2\dx\bigg)\\
  & = \e^{-N-2h+2}\bigg( \int_{B_{r_0}\setminus \e \Sigma }|\nabla
    V_\e|^2\dx
    +C_H\int_{B_{r_0}\setminus \e \Sigma } \left(|\nabla
    V_\e|^2+r_0^{-2}V_\e^2\right)\dx
    \bigg)\\
  &  \leq C_1\e^{-N-2h+2}\bigg( \int_{\Omega_\e }|\nabla V_\e|^2\dx
    +\int_{\Omega_\e }V_\e^2\dx\bigg)\\
  & = C_1\e^{-N-2h+2} ||V_\e||^2_{H^1(\Omega_\e)},
\end{align*}
for some $C_1>0$ independent of $R$ and $\e$. Therefore, by
\eqref{ineq_norm_qe}, \eqref{def_Te}, and
Proposition~\ref{prop_estimates_torsion}, we deduce that, for every $0<\e<\frac{r_0}{2R}$,
\begin{align}\label{hardy-help}
  \int_{B_R\setminus \Sigma}\bigg( |\nabla \widetilde{V}_\e |^2
  + \frac{\widetilde{V}_\e^2}{|x|^2}\bigg)\dx \leq C,
\end{align}
where the above constant $C$ is independent of $R$ and $\e$.

This implies that, for every $R>r_0$, there exists $C_R>0$ such
  that 
\begin{equation}\label{eq:bound}
  ||\widetilde{V}_\e||_{H^1(B_R\setminus\overline{\Sigma})}<C_R, \quad
  \text{for all $\e\in \big(0,\tfrac{r_0}{2R}\big)$}.
\end{equation}
Hence, by a  diagonal argument,
for every sequence $\e_j\to0^+$, there exist a subsequence (still
denoted as $\{\e_j\}_{j \in \mb{N}}$) and
$W\in W^{1,1}_{\rm loc}(\RN\setminus\overline{\Sigma})$ such that, for every $R>r_0$,
$W\in H^1(B_R\setminus\overline{\Sigma})$ and
\begin{align}\label{wh1}
  \widetilde{V}_{\e_j}\rightharpoonup W \quad \text{ weakly  in }
  H^1(B_R\setminus\overline{\Sigma}) \quad \text{ as } j\rightarrow +\infty.
\end{align}
Therefore, by the Rellich-Kondrachov Theorem and the compactness of
the trace operator into $L^2(\partial \Sigma)$, we have, for all
$R>r_0$,
\begin{align}
  &\label{sl2}\widetilde{V}_{\e_j} \rightarrow W
    \quad \text{ in } L^2(B_R\setminus\overline{\Sigma}) \quad \text{ as } j\rightarrow +\infty,
	\\& \label{stl2}\widetilde{V}_{\e_j} \rightarrow W \quad
  \text{ in }
  L^2(\partial \Sigma) \quad \text{ as } j\rightarrow +\infty.
\end{align}
Hence, by \eqref{hardy-help} and the weak lower semicontinuity of 
norms, we have
\begin{equation*}
  \int_{B_R\setminus\Sigma} \bigg(|\nabla
  W|^2+\frac{W^2}{|x|^2}\bigg)\dx\leq \liminf_{j\to+\infty}
  \int_{B_R\setminus\Sigma} \bigg(|\nabla \widetilde{V}_{\e_j} |^2
  +\frac{\widetilde{V}_{\e_j} ^2}{|x|^2}\bigg)\dx\leq C
\end{equation*}
for every $R>r_0$; 
hence, being $C$ independent of $R$, we conclude that
\begin{equation*}
  \int_{\R^N\setminus\Sigma} \bigg(|\nabla
  W|^2+\frac{W^2}{|x|^2}\bigg)\dx\le C,
\end{equation*}
so that $W\in \mathcal{D}^{1,2}(\RN\setminus \Sigma)$.

Let $v\in C_c^\infty(\RN\setminus\Sigma)$. There exist
$R_v>r_0$ and $j_v\in\N$ such that
\begin{equation*}
  \mathop{\rm supp} v\subset B_{R_v}\setminus\Sigma \subset
  \tfrac{1}{\e_j}\Omega\setminus \Sigma\quad\text{for all }j\geq j_v.
\end{equation*} 
Since $V_\e$ satisfies \eqref{eq_Ve}, by the change
of variable $x=\e_j y$ we obtain
\begin{align}\label{s-1}
&	\int_{B_{R_v}\setminus\Sigma}\big( \nabla
                           \widetilde{V}_{\e_j}
                           \cdot \nabla v + \e_j^{2}c_\alpha
                           \widetilde{V}_{\e_j} v \big)
                           \dx\nonumber \\
  & =  -\alpha \e_j\int_{\partial\Sigma} 
    \widetilde{V}_{\e_j} v\ds
    + \int_{\partial \Sigma}
    \bigg(\frac{\nabla u_n^0(\e_j x)}{\e_j^{h-1}} \cdot \nu +
    \alpha\, \e_j \frac{u_n^0(\e_j x)}{\e_j^h}\bigg)\,v \ds.
\end{align} 
In both cases, $u_n^0(0)=0$ and $u_n^0(0)\neq0$,
by Proposition \ref{prop_vanish_u} we have the uniform convergence 
\begin{equation}\label{jconv}
  \frac{\nabla u_n^0(\e x)}{\e^{h-1}}\cdot \nu +
  \e \alpha\, \frac{u_n^0(\e x)}{\e^h} \to 
  \nabla P_h^{u_n^0,0}(x)\cdot \nu
  +\alpha\, u_n^0(0) , \quad \text{in } \partial \Sigma,
\end{equation}
as $\e\to0^+$. Consequently
\begin{equation}\label{s-3}
  \int_{\partial \Sigma} \bigg( \frac{\nabla u_n^0(\e_j
  x)}{\e_j^{h-1}}\cdot
  \nu +    \alpha\, \e_j \frac{u_n^0(\e_j x)}{\e_j^h}\bigg)\,v \ds
  \to
  \int_{\partial \Sigma} \big(
\partial_\nu P_h^{u_n^0,0}+\alpha\, u_n^0(0)
\big)\,v \ds  ,
\end{equation}
as $j\rightarrow +\infty$. 
Hence, applying \eqref{wh1}, \eqref{sl2}, \eqref{stl2},  and
\eqref{s-3}, we can pass to the limit in \eqref{s-1}, thus obtaining 
\begin{align*}
	\int_{\RN\setminus\Sigma} \nabla W\cdot \nabla v   \dx =
  \int_{\partial \Sigma}\big(\partial_\nu P_h^{u_n^0,0}
  +\alpha\, u_n^0(0)\big)  v \ds
  \quad\text{for all $v\in \mathcal{D}^{1,2}(\RN\setminus\Sigma)$}.
\end{align*}
By uniqueness of the solution to problem \eqref{srtel-rp}, we conclude that 
\begin{align*}
	W=\widetilde{V}.
\end{align*}
By Urysohn's subsequence principle, we conclude that the convergences
\eqref{wh1}, \eqref{sl2}, and \eqref{stl2} hold as $\e\rightarrow 0$,
and not just along the sequence $\{\e_j\}_{j \in \mb{N}}$.

Moreover, by a change of variable, \eqref{eq_Ve} tested with $V_\e$,
\eqref{def_Te}, \eqref{stl2}, and \eqref{jconv},
\begin{align*}
  \e^{-N-2h+2}\te&= \int_{\partial \Sigma_{\e}}
                   \e^{-N-2h+2}\left(\partial_\nu u_n^0  +
                   \alpha\, u_n^0\right) V_{\e} \ds\\
                 &= \int_{\partial \Sigma}
                   \e^{-h+1}\left(\partial_\nu u_n^0(\e x) +
                   \alpha \,u_n^0(\e x)\right) \widetilde{V}_{\e} \ds\\
                 & \rightarrow \int_{\partial \Sigma} \left(
                   \partial_\nu P_h^{u_n^0,0}
                   +\alpha \, u_n^0(0)\right)\widetilde{V}\ds 
  \quad \text{as } \e \to 0^+.
\end{align*}
To conclude, it only remains to prove the strong $H^1$ convergence. By
 scaling, $\widetilde{V}_\e$ weakly solves
\begin{equation*}
	\begin{dcases}
		-\Delta \widetilde{V}_\e +c_\alpha
                \e^2\widetilde{V}_\e = 0, &
                \text{in } \tfrac{1}{\e}\Omega\setminus\overline{\Sigma}, \\
		\partial_\nu \widetilde{V}_\e  +    \alpha\e
                \widetilde{V}_\e =0,
                & \text{on } \partial\big( \tfrac\Omega\e\big),\\
		\partial_\nu \widetilde{V}_\e  +    \alpha \e
                \widetilde{V}_\e
                =\e^{-h+1}\partial_\nu u_n^0(\e x)  +
                \e^{-h+1} \alpha u_n^0(\e x), & \text{on } \partial\Sigma.\\
	\end{dcases}
\end{equation*}
For $R>r_0$ fixed, let $\e\in(0,\frac{r_0}{2R})$, so that 
$B_R\setminus \Sigma \subset \frac{1}{\e}\Omega\setminus\Sigma$. Then,
testing the difference between the above equation  and
\eqref{srtel-rp} with $\widetilde{V}_\e-\widetilde{V}$ we obtain
\begin{align}
	\notag &\int_{B_R\setminus \Sigma}|\nabla
                 (\widetilde{V}_\e-\widetilde{V})|^2\dx \\
               & = -\e^2c_\alpha\int_{B_R\setminus \Sigma} \widetilde{V}_\e(
                 \widetilde{V}_\e-\widetilde{V})\dx+\int_{\partial
                 B_R}
                 (\widetilde{V}_\e-\widetilde{V})(\partial_\nu\widetilde{V}_\e-
                 \partial_\nu\widetilde{V})
                 \ds\nonumber\\
               \label{sth1}&\quad +\int_{\partial \Sigma}
           (\widetilde{V}_\e-\widetilde{V})
           \bigg(- \alpha \e \widetilde{V}_\e+
           \frac{\nabla u_n^0(\e x)}{\e^{h-1}}\cdot\nu  +
           \e \alpha \frac{u_n^0(\e x)}{\e^h}-\partial_\nu P_h^{u_n^0,0}-\alpha u_n^0(0) \bigg)\ds.
\end{align}
We notice that, if $0<\e<\frac{r_0}{2R}$, \eqref{def_Sigma} ensures
that
$B_{2R}\setminus B_{r_0} \subset
\frac{1}{\e}\Omega\setminus\Sigma$. Hence, $\widetilde{V}_\e$ weakly
solves $-\Delta \widetilde{V}_\e =-c_\alpha \e^2\widetilde{V}_\e$ in
$B_{2R}\setminus B_{r_0}$. Since the family
$\{\e^2\widetilde{V}_\e\}_{\e\in (0,\frac{r_0}{2R})}$ is bounded in
$L^2(B_{2R}\setminus B_{r_0})$ by \eqref{eq:bound}, by classical
elliptic regularity theory we have
$\{\widetilde{V}_\e\}_{0<\e<\frac{r_0}{2R}}$ is bounded in
$H^2\big(B_{\frac{3}{2}R}\setminus B_{\frac{r_0+R}{2}}\big)$ uniformly
with respect to $\e$. Therefore, passing to traces,
$\{\partial_\nu \widetilde{V}_\e\}_{0<\e<\frac{r_0}{2R}}$ is bounded
in $L^2(\partial B_R)$. Hence, using this, \eqref{jconv}, and the weak
convergence $\widetilde{V}_{\e}\rightharpoonup \widetilde{V}$ in
$H^1(B_R\setminus\overline{\Sigma})$, which implies strong
$L^2(B_R\setminus\overline{\Sigma})$-convergence and strong
convergence of traces in $L^2(\partial B_R)$ and in
$L^2(\partial \Sigma)$, the right hand side of \eqref{sth1} vanishes
in the limit as $\e\to0^+$.  This proves that
$\nabla \widetilde{V}_\e\rightarrow \nabla \widetilde{V}$ in
$L^2(B_R\setminus \overline{\Sigma})$. This fact, along with
\eqref{sl2}, gives $\widetilde{V}_\e \rightarrow \widetilde{V}$ in
$H^1 (B_R\setminus \overline{\Sigma})$. This concludes the proof.
\end{proof}

\begin{proof}[Proof of
  Theorem~\ref{theor_asympotic_eigen_precise_u_0_dim_big}]
  By translation, it is not restrictive to assume $x_0=0$.  The
  result is a direct consequence of Theorem \ref{theo_eigen_var_abs},
  Proposition \ref{prop_asympotic_sigma_and_boundary_sigma}, and
  Theorem \ref{theorem_blow_up}.
\end{proof}

Now, we study the rate of convergence of perturbed eigenfunctions,
proving Theorems
\ref{theor_asympotic_eigenfunction_precise_u_0_dim_big} and
\ref{theor_asympotic_eigenfunction_precise_u_0_dim_2}.
\begin{proof}[Proof of Theorems
  \ref{theor_asympotic_eigenfunction_precise_u_0_dim_big} and
  \ref{theor_asympotic_eigenfunction_precise_u_0_dim_2}]
  By translation, it is not restrictive to assume $x_0=0$.  Let
  us consider the function
  $\varphi_\e:=\Phi_\e-\Phi_\e^0+\widetilde{V}_\e$, where
  $\widetilde{V}_\e$ is defined in \eqref{blow_ip_scale}. By
  \eqref{eq_eigenfun_abs_with_Ve} and a change of variable,
\begin{equation}\label{eq:au1}
  \int_{\frac{1}{\e}\Omega\setminus \Sigma} |\nabla \varphi_\e|^2\dx
  +\e^2 \int_{\frac{1}{\e}\Omega\setminus \Sigma}\varphi_\e^2\dx
  =o\left(\e^{-N-2h+2}\mc{T}_\e\right)+O(\e^{-N-2h+2}\norm{u_n^0}_{L^2(\Sigma_\e)}^4),
\end{equation}
as $ \e \to 0^+$. If $u_n^0(0)=0$, then $h=k$, where $k$ is the
vanishing order of $u_n^0$ at $0$. Therefore, $u_n^0(x)=O(|x|^k)$ as $|x|\to 0$,
hence $\|u_n^0\|_{L^2(\Sigma_\e)}^4=O(\e^{2N+4k})=o(\e^{N+2h-2})$
as $\e\to 0^+$.
 On the other hand, in the case
  $u_n^0(0)\neq 0$, we have $h=1$ and
  $u_n^0(x)=u_n^0(0)+O(|x|)$ as $|x|\to
  0^+$; hence
  $\|u_n^0\|_{L^2(\Sigma_\e)}^4=O(\e^{2N})=o(\e^N)=o(\e^{N+2h-2})$
  as $\e\to 0^+$.
In both cases, we conclude that 
\begin{align}\label{rate_ef_1}
    \|u_n^0\|^4_{L^2(\Sigma_\e)}=o(\e^{N+2h-2}) \quad \text{ as} \quad \e\to 0^+.
\end{align}
In particular, we have proved Theorem
\ref{theor_asympotic_eigenfunction_precise_u_0_dim_2} in view of
Theorem \ref{theo_eigen_var_abs} and Proposition
\ref{prop_estimates_torsion}.

Furthermore, by \eqref{eq:au1}, Theorem~\ref{theorem_blow_up}, and
\eqref{rate_ef_1},
\begin{align}\label{rate_ef_2}
\int_{\frac{1}{\e}\Omega\setminus \Sigma} |\nabla \varphi_\e|^2\dx
  +\e^2
  \int_{\frac{1}{\e}\Omega\setminus \Sigma}\varphi_\e^2\dx \to 0 \quad
  \text{ as } \e\to 0^+.
\end{align}
Let $r_0>0$ be as in \eqref{def_Sigma}. Choose $R>r_0$ and
$\e<\frac{r_0}{2R}$. If $N\geq3$, we can apply Lemma~\ref{Hardy} to obtain
\begin{align*}
  \int_{B_R\setminus \Sigma}&\left( |\nabla \varphi_\e |^2 +
                              \frac{\varphi_\e^2}{|x|^2}\right)\dx \leq
                              \int_{B_{\frac{r_0}{\e}}\setminus
                              \Sigma}
                              \left( |\nabla \varphi_\e |^2 +
                              \frac{\varphi_\e^2}{|x|^2}\right)\dx\\
                            & \leq
                              \int_{B_{\frac{r_0}{\e}}\setminus
                              \Sigma}
                              |\nabla \varphi_\e |^2\dx+C_H\bigg(
                              \int_{B_{\frac{r_0}{\e}}\setminus
                              \Sigma}
                              |\nabla \varphi_\e|^2\dx
                              +\frac{\e^2}{r_0^2}
                              \int_{B_{\frac{r_0}{\epsilon}}\setminus
                              \Sigma}\varphi_\e^2\dx\bigg)\\
                            & \leq (C_H+1)\bigg( \int_{\frac{1}{\e}\Omega\setminus \Sigma
                              }|\nabla \varphi_\e|^2\dx
                              +\frac{\e^2}{r_0^2}\int_{\frac{1}{\e}\Omega\setminus \Sigma}\varphi_\e^2\dx
                              \bigg).
\end{align*}
From this and \eqref{rate_ef_2} it follows that
\begin{align*}
  \int_{B_R\setminus \Sigma}\left( |\nabla \varphi_\e |^2 +
  \frac{\varphi_\e^2}{|x|^2}\right)\dx \to 0 \quad \text{ as } \e\to 0^+,
\end{align*}
for every $R>0$ such that $\overline{\Sigma}\subset B_R$. Hence
\begin{equation}\label{eq:phi-eps-to-zero}
\varphi_\e\to 0 \quad \text{strongly in $H^1(B_R\setminus \overline{\Sigma})$ as
  $\e\to 0^+$}.
\end{equation}
 If $u_n^0(0)=0$, then $h$ is the vanishing
  order of $u_n^0$ at $0$ and, by Proposition~\ref{prop_vanish_u},
  $\Phi^0_\e\to P_h^{u_n^0,0}$ as $\e\to0^+$ in $H^1(B_R)$ for any
  $R>0$.  
  
  If $u_n^0(0)\neq0$, then $h=1$ and
  $\Phi^0_\e(x)=\e^{-1}(u_n^0(\e x)- u_n^0(0))\to \nabla u_n^0(0)\cdot
  x=P_1^{u_n^0,0}(x)$ as $\e\to0^+$ in $H^1(B_R)$ for any $R>0$.

This, together with \eqref{eq:phi-eps-to-zero} and Theorem
\ref{theorem_blow_up}, establishes \eqref{eqn-1-rate-ef}.

Finally, using \eqref{rate_ef_1} and Theorem~\ref{theorem_blow_up} in
\eqref{eq_eigenfun_abs}, we obtain
\begin{equation*}
    \lim_{\e\to0^+}\e^{-(N+2h-2)}\|u_n^\e - u_n^0\|^2_{H^1(\Omega_\e)}=
  \tr,
\end{equation*}
thus proving  \eqref{eqn-2-rate-ef}.
\end{proof}

\section{The case of a round hole}\label{sec_round_hole}
In this section, we focus on the case where 
\begin{equation*}
\Sigma= B_{r_1},
\end{equation*}
for some $0<r_1<r_0$.  In this case, we can compute $\tr$ explicitly
if $N \ge 3$, while we determine the exact asymptotic behavior of
$\mc{T}_\e$ if $N=2$.  We do this only in the case $u_n^0(0)=0$, since
the case  $u_n^0(0)\neq0$ is already completely covered by Theorem
\ref{theor_asympotic_eigen_precise_u_not_0}.

\subsection{Dimension $N\ge 3$}\label{subsec_round_hole_dim_big}
If $N\geq3$ and $u_n^0(0)=0$, the quantity $\tr$ defined in
  \eqref{def_torsion_global} coincides with the one that
  appears in the blow-up analysis carried out in \cite{FLO_Neumann}
  for the Neumann
  problem; in the case of spherical holes, it can be explicitly
  computed as below.
\begin{proposition}\label{prop_round_hole_dim_big}
If $u_n^0(0) = 0$, $N \ge 3$, and $k$ is the vanishing order of
$u_n^0$ at $0$, then 
 \begin{equation*}
   \tr=\frac{k^2r_1^{N+2k-2}}{N+k-2}\int_{\partial B_1}
   |P_k^{u_n^0,0}|^2\ds,
\end{equation*}  
where $\tr$ is as in \eqref{def_torsion_global}.
\end{proposition}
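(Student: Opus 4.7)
The plan is to write down $\widetilde V$ explicitly using separation of variables in the exterior of the ball $B_{r_1}$, and then evaluate the boundary integral giving $\tr$.

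Since $u_n^0(0)=0$, we have $h=k$ and the Neumann datum in \eqref{srtel-rp} reduces to $\partial_\nu P_k^{u_n^0,0}$ on $\partial B_{r_1}$, where $\nu=-x/|x|$ is the outer normal to $\Omega_\e=\Omega\setminus\overline{B_{r_1}}$ at $\partial B_{r_1}$. By Proposition~\ref{prop_vanish_u}, $P_k^{u_n^0,0}$ is a harmonic homogeneous polynomial of degree $k$. I would introduce the spherical harmonic $Y:=P_k^{u_n^0,0}\big|_{\partial B_1}$, so that $P_k^{u_n^0,0}(x)=|x|^k Y(x/|x|)$ for all $x\in\R^N$.

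Next I would construct the candidate solution by separation of variables. Since a harmonic function on $\R^N\setminus\overline{B_{r_1}}$ of the separated form $r^\beta Y(\theta)$ must satisfy $\beta(\beta+N-2)=k(k+N-2)$, the only choice decaying at infinity (and therefore lying in $\mathcal{D}^{1,2}(\R^N\setminus B_{r_1})$ when $N\ge 3$) is $\beta=-(N+k-2)$. So I set
\begin{equation*}
\widetilde V(x):=c\,|x|^{-(N+k-2)}\,Y(x/|x|),
\end{equation*}
verify that $\widetilde V\in\mathcal{D}^{1,2}(\R^N\setminus B_{r_1})$, and determine $c$ by matching the Neumann datum. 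Using $\partial_\nu=-\partial_r$ on $\partial B_{r_1}$, the condition $\partial_\nu\widetilde V=\partial_\nu P_k^{u_n^0,0}$ on $\partial B_{r_1}$ becomes
\begin{equation*}
c\,(N+k-2)\,r_1^{-(N+k-1)}Y=-k\,r_1^{k-1}Y,
\end{equation*}
which yields $c=-\dfrac{k\,r_1^{N+2k-2}}{N+k-2}$. Uniqueness in Proposition~\ref{prop_min_tilde_J} guarantees that this explicit function is the $\widetilde V$ in question.

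Finally I compute the boundary integral formula for $\tr$ in \eqref{def_torsion_global}: on $\partial B_{r_1}$ we have $\partial_\nu P_k^{u_n^0,0}=-k r_1^{k-1}Y$ and $\widetilde V=-\dfrac{k r_1^k}{N+k-2}Y$, so
\begin{equation*}
\tr=\int_{\partial B_{r_1}}\bigl(\partial_\nu P_k^{u_n^0,0}\bigr)\widetilde V\ds
=\frac{k^2 r_1^{2k-1}}{N+k-2}\int_{\partial B_{r_1}}Y^2\ds
=\frac{k^2 r_1^{N+2k-2}}{N+k-2}\int_{\partial B_1}|P_k^{u_n^0,0}|^2\ds,
\end{equation*}
where the last equality uses the scaling $\ds_{\partial B_{r_1}}=r_1^{N-1}\ds_{\partial B_1}$ and the identification $Y=P_k^{u_n^0,0}$ on $\partial B_1$. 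The only real care required is bookkeeping the sign of $\nu$ and checking that the decay rate $|x|^{-(N+k-2)}$ indeed gives a finite Dirichlet integral at infinity (which it does for $N\ge 3$, since $|\nabla \widetilde V|^2\sim|x|^{-2(N+k-1)}$ is integrable at infinity in $\R^N$ precisely under this condition); no serious obstacle beyond that is expected.
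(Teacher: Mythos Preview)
Your proposal is correct and follows the natural approach: the paper itself does not spell out the computation but refers to \cite[Lemma~6.1]{FLO_Neumann} ``up to a rescaling,'' and that lemma is proved by exactly the separation-of-variables construction you carry out here. Your bookkeeping of the normal direction, the determination of $c$, and the final scaling from $\partial B_{r_1}$ to $\partial B_1$ are all accurate.
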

\begin{proof}
The proof is contained in \cite[Lemma 6.1]{FLO_Neumann}, up to
    a rescaling.
\end{proof}

We are now in a position to prove Theorem
\ref{theor_asympotic_eigen_precise_round_u_0_dim_big}.

\begin{proof}[Proof of Theorem \ref{theor_asympotic_eigen_precise_round_u_0_dim_big}]
  Without loss of generality, we assume $x_0=0$. By the divergence
  theorem, since $P_k^{u_n^0,0}$ is harmonic by Proposition
  \ref{prop_vanish_u},
\begin{align*}
\int_{B_{r_1}} |\nabla P_k^{u_n^0,0}|^2 \dx&=
\int_{B_{r_1}} \dive\big( P_k^{u_n^0,0} \nabla P_k^{u_n^0,0}\big)
                                             \dx\\
  &=
\int_{\partial B_{r_1}} P_k^{u_n^0,0} \partial_{\nu} P_k^{u_n^0,0} \,
\ds
=k r_1^{N+2k-2}\int_{\partial B_1} |P_k^{u_n^0}|^2 \, \ds.
\end{align*}
Therefore Theorem
\ref{theor_asympotic_eigen_precise_round_u_0_dim_big} follows from
Theorem \ref{theor_asympotic_eigen_precise_u_0_dim_big} and
Proposition \ref{prop_round_hole_dim_big}.
\end{proof}

\subsection{Dimension $N=2$}\label{subsec_round_hole_dim_2}
In dimension $N=2$, we can instead determine an asymptotic expansion of
$\mc{T}_\e$ as $\e \to 0^+$.  To this aim, we proceed as in
\cite[Section 6.2]{FLO_Neumann}.

Let $N=2$, $\Sigma=B_{r_1}$ for some $0<r_1<r_0$, $x_0=0\in\Omega$,
$u_n^0(0)=0$ with $k\geq1$ being the vanishing order of $u_n^0$ at
$0$. Let $P_k^{u_n^0,0}$ be as in \eqref{def_P_ui}.

For every $R>r_1$, $R\neq-k/\alpha$, and $\e>0$ sufficiently small,
the function $W_{\e,R}: B_R \setminus \e\overline{B_{r_1}}\to\R$,
given in polar coordinates by
\begin{equation}\label{eq_Wer_explicit}
  W_{\e,R}(r,\theta)=
  r_1^{2k}\frac{(\alpha R -k)r^k-(\alpha R +k)R^{2k}r^{-k}}{(\alpha R +k)R^{2k}+(\alpha R -k)r_1^{2k}\e^{2k}}
  P_k^{u_n^0,0}(\theta)\e^{2k},
\end{equation}
is well-defined. Moreover, $W_{\e,R}\in C^\infty(\overline{ B_R
  \setminus \e B_{r_1}})$. Since the polynomial $P_k^{u_n^0,0}$ is
harmonic and $k$-homogeneous, we have $-\Delta_{{\mathbb
    S}^{1}}P_k^{u_n^0,0}=k^2 P_k^{u_n^0,0}$ on ${\mathbb
    S}^{1}$; hence $W_{\e,R}$ is a classical solution to the problem
\begin{equation*}
\begin{cases}
  -\Delta W_{\e,R} =0, & \text{ in } B_R \setminus \e\overline {B_{r_1}},\\
  \partial_{\nu} W_{\e,R}+\alpha W_{\e,R} = 0, & \text{ on } \partial B_R,\\
  \partial_{\nu} W_{\e,R}= \partial_{\nu} P_k^{u_n^0,0}, & \text{ on }
  \e\partial B_{r_1}.
\end{cases}
\end{equation*}
In particular, we also have  $W_{\e,R} \in H^1(B_R\setminus
\e\overline{B_{r_1}})$ and 
\begin{equation}\label{eq_W_er}
\int_{B_R \setminus \e B_{r_1}} \nabla  W_{\e,R} \cdot  \nabla v 
\dx+\alpha\int_{ \partial B_R}  W_{\e,R} v  \ds
=\int_{ \partial (\e B_{r_1})}  (\partial_{\nu}  P_k^{u_n^0,0}) v  \ds
\end{equation}
for every $v\in H^1(B_R\setminus \e\overline{B_{r_1}})$.
Let us  define
\begin{equation}\label{def_torsion_neumann}
  \mc{T}_\e^R:=
  \int_{ \partial (\e B_{r_1})}  (\partial_{\nu}  P_k^{u_n^0,0})
  W_{\e,R} \, \ds=
  \int_{B_R \setminus \e B_{r_1}} |\nabla  W_{\e,R}|^2
  \dx+\alpha\int_{ \partial B_R}  W_{\e,R}^2\ds.
\end{equation}
\begin{remark}\label{rem:Wer}
  We observe that, in view of \eqref{eq_Wer_explicit},
\begin{equation}\label{eq_asym_W_er}
  W_{\e,R}(\e r_1,\theta)= -r_1^k P_k^{u_n^0,0}(\theta)\e^k +o(\e^k)
  \quad  \text{as }\e \to 0^+,
\end{equation}
uniformly with respect to $\theta \in \mb{S}^{1}$.
\end{remark}

\begin{proposition}\label{prop_asymptotics_torsion_neumann}
  Let $u_n^0(0)=0$ and $k\geq1$ be the vanishing order of $u_n^0$ at
  $0$.
  Then, for every
  $R>r_1$, $R\neq-k/\alpha$, we have 
\begin{equation}\label{eq_asymptotics_torsion_Wer}
  \mc{T}_\e^R=k \pi r_1^{2k} \left(\left|
      \pd{^ku_n^0}{x^k_1}(0)\right|^2+
    \frac{1}{k^2}\left|\pd{^ku_n^0}{x^{k-1}_1\partial
        x_2}(0)\right|^2\right)
  \e^{2k}+o( \e^{2k}) \quad \text{as } \e \to 0^+.
\end{equation}
Furthermore,
\begin{align}
  &\norm{W_{\e,R}}_{L^2(B_R\setminus \e\overline{B_{r_1}})}=O( \e^{k+1}) \quad
    \text{as } \e \to 0^+,
    \label{eq_asymptotics_norm_L2_Wer}\\
  &\norm{W_{\e,R}}_{H^1(B_R\setminus \e\overline{B_{r_1}})}=O( \e^{k})
    \quad
    \text{as } \e \to 0^+.\label{eq_asymptotics_norm_H1_Wer}
\end{align}
\end{proposition}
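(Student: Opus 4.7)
\textbf{Proof plan for Proposition \ref{prop_asymptotics_torsion_neumann}.}
The approach is a direct computation based on the explicit formula \eqref{eq_Wer_explicit} together with the boundary representation of $\mc{T}_\e^R$ in \eqref{def_torsion_neumann}. Starting from $\mc{T}_\e^R=\int_{\partial(\e B_{r_1})}(\partial_\nu P_k^{u_n^0,0})W_{\e,R}\,\ds$, I use Proposition \ref{prop_vanish_u} to see that $P_k^{u_n^0,0}$ is a $k$-homogeneous harmonic polynomial in $\R^2$, so that in polar coordinates $P_k^{u_n^0,0}(r,\theta)=r^k P_k^{u_n^0,0}(\theta)$. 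Since the outer unit normal on $\partial(\e B_{r_1})$ with respect to the annular domain $B_R\setminus \e\overline{B_{r_1}}$ is $-e_r$, one gets $\partial_\nu P_k^{u_n^0,0}|_{r=\e r_1}=-k(\e r_1)^{k-1}P_k^{u_n^0,0}(\theta)$. Plugging this together with the uniform asymptotic \eqref{eq_asym_W_er} of Remark \ref{rem:Wer}, and using $\ds=\e r_1\,d\theta$, yields
\[
\mc{T}_\e^R=kr_1^{2k}\e^{2k}\int_0^{2\pi}|P_k^{u_n^0,0}(\theta)|^2\,d\theta+o(\e^{2k}).
\]

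Next, to evaluate the angular integral, I exploit that, being harmonic and $k$-homogeneous in $\R^2$, $P_k^{u_n^0,0}$ is a linear combination $a\,\mathrm{Re}(z^k)+b\,\mathrm{Im}(z^k)$, where $z=x_1+ix_2$. The constants $a$ and $b$ are read off by comparing the multinomial expansions of $\mathrm{Re}(z^k)$ and $\mathrm{Im}(z^k)$ with the Taylor polynomial definition \eqref{def_P_ui}, focusing on the coefficients of the monomials $x_1^k$ and $x_1^{k-1}x_2$; this identifies $a$ and $b$ as explicit multiples of $\partial_{x_1}^k u_n^0(0)$ and $\partial_{x_1}^{k-1}\partial_{x_2}u_n^0(0)$ respectively. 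Plancherel on the circle, $\int_0^{2\pi}|a\cos(k\theta)+b\sin(k\theta)|^2\,d\theta=\pi(a^2+b^2)$, then produces \eqref{eq_asymptotics_torsion_Wer}.

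For the norm estimates, I rewrite \eqref{eq_Wer_explicit} in polar form as $W_{\e,R}(r,\theta)=(A_\e r^k+B_\e r^{-k})P_k^{u_n^0,0}(\theta)$, where separating the two radial pieces in the numerator and expanding the denominator gives $A_\e=O(\e^{2k})$ and $B_\e=-r_1^{2k}\e^{2k}+O(\e^{4k})$ as $\e\to 0^+$. Radial integration of $(A_\e r^k+B_\e r^{-k})^2\,r$ over $[\e r_1,R]$ is dominated by the singular $B_\e^2 r^{-2k+1}$ term, whose contribution is of order $B_\e^2(\e r_1)^{-2k+2}=O(\e^{2k+2})$, proving \eqref{eq_asymptotics_norm_L2_Wer}. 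For the $H^1$ bound, a direct computation of $|\nabla W_{\e,R}|^2$ in polar coordinates, using the identity $\int_0^{2\pi}|\partial_\theta P_k^{u_n^0,0}(\theta)|^2\,d\theta=k^2\int_0^{2\pi}|P_k^{u_n^0,0}(\theta)|^2\,d\theta$, reduces after cancellation of cross terms to the radial integrand $2k^2(A_\e^2r^{2k-1}+B_\e^2r^{-2k-1})$; integration over $[\e r_1,R]$ yields $\|\nabla W_{\e,R}\|_{L^2}^2=O(\e^{2k})$ with leading term $B_\e^2(\e r_1)^{-2k}\sim r_1^{2k}\e^{2k}$, while the boundary contribution $\alpha\int_{\partial B_R}W_{\e,R}^2\,\ds$ is $O(\e^{4k})$ and therefore negligible, proving \eqref{eq_asymptotics_norm_H1_Wer}. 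The delicate point is the Fourier identification step: expressing the two coefficients of $P_k^{u_n^0,0}|_{\mathbb{S}^{1}}$ in terms of exactly two Cartesian partial derivatives of $u_n^0$ at the origin requires careful bookkeeping of multinomial coefficients in $\mathrm{Re}(z^k)$ and $\mathrm{Im}(z^k)$, made possible by the harmonicity constraint which collapses the $(k+1)$-dimensional space of degree-$k$ polynomials in $\R^2$ down to the $2$-dimensional space of harmonic ones.
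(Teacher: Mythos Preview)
Your proposal is correct and follows essentially the same route as the paper, which also computes $\mc{T}_\e^R$ from the boundary formula \eqref{def_torsion_neumann} using Remark~\ref{rem:Wer} together with the trigonometric representation $P_k^{u_n^0,0}(r,\theta)=r^k\big(a\cos(k\theta)+b\sin(k\theta)\big)$, and then simply asserts that the norm bounds follow from \eqref{eq_Wer_explicit}. Your explicit radial integration fleshes out that last step; note only that for $k=1$ the dominant integral $\int_{\e r_1}^R B_\e^2\,r^{-2k+1}\,dr$ is logarithmic rather than a pure power, so the $L^2$ bound \eqref{eq_asymptotics_norm_L2_Wer} acquires a harmless factor $\sqrt{|\ln\e|}$ that does not affect any downstream use.
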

\begin{proof}
Since the polynomial $P_k^{u_n^0,0}$ is  harmonic and $k$-homogeneous, we have necessarily
  \begin{equation*}
    P_k^{u_n^0,0}(r\cos t ,r\sin t)=r^k\bigg( \frac{\partial^k
      u_n^0}{\partial x_1^k}(0)\cos(kt)+
\frac1k \frac{\partial^k
      u_n^0}{\partial x_1^{k-1}\partial x_2}(0)\sin(kt)\bigg).
  \end{equation*}
  By   \eqref{eq_asym_W_er} and a change of variables 
\begin{align*}
  \int_{\partial B_{\e r_1}}\partial_\nu P_k^{u_n^0,0}  W_{\e, R}\ds
  &=-k \int_{\partial B_1}  P_k^{u_n^0,0}(\e  r_1 \theta)W_{\e, R}(\e  r_1\theta) \ds \\
  &=kr_1^{2k} \e^{2k}\int_{\partial B_1} |P_k^{u_n^0,0}(\theta)|^2\ds+o(\e^{2k})\\
  &= k\pi r_1^{2k}\e^{2k} \bigg(\bigg|
    \pd{^ku_n^0}{x^k_1}(0) \bigg|^2+\frac{1}{k^2}
    \bigg|\pd{^ku_n^0}{x^{k-1}_1\partial x_2}(0) \bigg|^2 \bigg)
    +o( \e^{2k})
\end{align*}
as $\e \to 0^+$, thus obtaining \eqref{eq_asymptotics_torsion_Wer}.
Furthermore, \eqref{eq_asymptotics_norm_L2_Wer} and
\eqref{eq_asymptotics_norm_H1_Wer} follow from
\eqref{eq_Wer_explicit}.
\end{proof}

Let us now consider the problem as in
Proposition~\ref{propo_tau_domain}, for the domain $\Omega=B_R$ and
$\Sigma=B_{r_1}$ with $R>r_1$, replacing the boundary Robin datum
$u_n^0$ with its approximation $P^{u_n^0,0}_k$. More precisely, let
$ Z_{\e,R}\in H^1(B_R\setminus \e \overline{B_{r_1}})$ be the unique
solution of the problem
\begin{equation*}
\begin{dcases}
  -\Delta  Z_{\e,R} +c_\alpha Z_{\e,R} = 0, & \text{in } B_R\setminus
  \e \overline{B_{r_1}}, \\
\partial_\nu Z_{\e,R}  +    \alpha Z_{\e,R} =0, & \text{on } \partial B_R,\\
\partial_\nu Z_{\e,R}  +    \alpha Z_{\e,R} =\partial_\nu P_k^{u_n^0,0}
+    \alpha P_k^{u_n^0,0}, & \text{on } \e \partial  B_{r_1},\\
\end{dcases}
\end{equation*}
that is, for every
$\varphi \in H^1(B_R\setminus  \e \overline{B_{r_1}})$,
\begin{multline}\label{test-1}
	\int_{B_R\setminus B_{\e r_1}} (\nabla Z_{\e,R}\cdot \nabla
        \varphi +
        c_\alpha Z_{\e,R} \varphi ) \dx +\alpha
        \int_{\partial (B_R\setminus B_{\e r_1})} Z_{\e,R}\,\varphi \ds \\
        = \int_{\partial B_{\e r_1}} \left(\partial_\nu P_k^{u_n^0,0}
          +    \alpha P_k^{u_n^0,0}\right)\varphi \ds.
\end{multline}
Let
\begin{align}
\notag	\tr_{\e,R}&= \int_{\partial B_{\e r_1}} \left(\partial_\nu
                    P_k^{u_n^0,0}  +
                    \alpha P_k^{u_n^0,0}\right) Z_{\e,R} \ds
  \\
  \label{eq_torsion_pn_ball}&=	\int_{B_R\setminus B_{\e r_1}}
                              (|\nabla Z_{\e,R}|^2 + c_\alpha
                              Z_{\e,R}^2 ) \dx
                              +\alpha\int_{\partial (B_R\setminus
                              B_{\e r_1})}  Z_{\e,R}^2 \ds.	
\end{align}
Arguing as in Proposition \ref{prop_estimates_torsion} and using
\eqref{ineq_norm_qe}, it is easy to verify that, for any $\delta\in (0,1)$,
\begin{equation}\label{eq_norm_ZeR}
\norm{ Z_{\e,R}}_{H^1(B_R\setminus \e \overline{B_{r_1}})}=
O(\e^{k-\frac{\delta}{2}})
\quad \text{as } \e \to 0^+,
\end{equation}
where $k\geq1$ is the vanishing order of $u_n^0$ at $0$.

\begin{proposition}\label{prop_torsion_ball_with_P}
  Let $N=2$, $u_n^0(0)=0$ and $k$ be the vanishing order of $u_n^0$ at
  $0$. For every $R>r_1$, $R\neq-k/\alpha$, and $\e>0$ sufficiently
  small, let $\tr_{\e,R}$ and $\mc{T}_\e^R$ be as in
  \eqref{eq_torsion_pn_ball} and \eqref{def_torsion_neumann},
  respectively. Then
  \begin{align}\label{eq_torsion_Ver_Wer}
    \tr_{\e,R}=\mc{T}_\e^R+o(\e^{2k}) \quad \text{as }\e\to 0^+.
\end{align}
\end{proposition}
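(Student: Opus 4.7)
The plan is to derive an explicit identity relating $\tr_{\e,R}$ and $\mc{T}_\e^R$ through cross-testing, and then to control the resulting correction terms. I would begin by using $W_{\e,R}\in H^1(B_R\setminus \e\overline{B_{r_1}})$ as a test function in the weak formulation \eqref{test-1} for $Z_{\e,R}$ and, conversely, $Z_{\e,R}$ as a test function in \eqref{eq_W_er} for $W_{\e,R}$. Subtracting the two resulting equalities makes the symmetric Dirichlet form $\int \nabla Z_{\e,R}\cdot\nabla W_{\e,R}\dx$ and the $\alpha\int_{\partial B_R} Z_{\e,R}W_{\e,R}\ds$ contributions cancel, leaving
\begin{equation*}
c_\alpha\int_{B_R\setminus B_{\e r_1}} Z_{\e,R}W_{\e,R}\dx+\alpha\int_{\partial B_{\e r_1}} Z_{\e,R}W_{\e,R}\ds=\int_{\partial B_{\e r_1}}(\partial_\nu P_k^{u_n^0,0}+\alpha P_k^{u_n^0,0})W_{\e,R}\ds-\int_{\partial B_{\e r_1}}\partial_\nu P_k^{u_n^0,0}\,Z_{\e,R}\ds.
\end{equation*}
Inserting the definitions $\mc{T}_\e^R=\int_{\partial B_{\e r_1}}\partial_\nu P_k^{u_n^0,0}\,W_{\e,R}\ds$ and $\tr_{\e,R}=\int_{\partial B_{\e r_1}}(\partial_\nu P_k^{u_n^0,0}+\alpha P_k^{u_n^0,0})Z_{\e,R}\ds$ from \eqref{def_torsion_neumann} and \eqref{eq_torsion_pn_ball}, and rearranging, I expect the identity
\begin{equation*}
\tr_{\e,R}-\mc{T}_\e^R=\alpha\int_{\partial B_{\e r_1}} P_k^{u_n^0,0}(W_{\e,R}+Z_{\e,R})\ds-c_\alpha\int_{B_R\setminus B_{\e r_1}} Z_{\e,R}W_{\e,R}\dx-\alpha\int_{\partial B_{\e r_1}} Z_{\e,R}W_{\e,R}\ds.
\end{equation*}

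It then remains to check that each of the four terms on the right-hand side is $o(\e^{2k})$ as $\e\to 0^+$. From the explicit formula \eqref{eq_Wer_explicit}, the asymptotics \eqref{eq_asym_W_er}, and the $k$-homogeneity of $P_k^{u_n^0,0}$, I would record the ingredients
\begin{equation*}
\|P_k^{u_n^0,0}\|_{L^2(\partial B_{\e r_1})}=O(\e^{k+\frac{1}{2}}),\qquad \|W_{\e,R}\|_{L^2(\partial B_{\e r_1})}=O(\e^{k+\frac{1}{2}}),\qquad \|W_{\e,R}\|_{L^2(B_R\setminus B_{\e r_1})}=O(\e^{k+1}),
\end{equation*}
the last one being \eqref{eq_asymptotics_norm_L2_Wer}. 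For $Z_{\e,R}$ I would combine the bound \eqref{eq_norm_ZeR}, namely $\|Z_{\e,R}\|_{H^1}=O(\e^{k-\delta/2})$ for every $\delta\in(0,1)$, with the two-dimensional trace inequality \eqref{ineq_traces_2} applied to $Z_{\e,R}$, which yields $\|Z_{\e,R}\|_{L^2(\partial B_{\e r_1})}=O(\e^{k+\frac{1}{2}-\delta})$. The Cauchy--Schwarz inequality then estimates each of the four correction terms by $O(\e^{2k+1-\delta/2})$ or better, which is $o(\e^{2k})$ once $\delta$ is chosen sufficiently small.

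The main obstacle is that the available bound on $Z_{\e,R}$ is only $O(\e^{k-\delta/2})$ in $H^1$ rather than the natural order $O(\e^k)$, due to the suboptimal $\e^{1-\delta}$ factor appearing in the two-dimensional trace inequality \eqref{ineq_traces_2}. This loss is however more than compensated by the fact that every occurrence of $Z_{\e,R}$ in the correction is paired against either $W_{\e,R}$ or $P_k^{u_n^0,0}$, each contributing an explicit extra $\e$-power (a full $\e$ in the bulk $L^2$ norm of $W_{\e,R}$, or $\e^{1/2}$ on the inner sphere in all other cases). Since $\delta$ may be taken arbitrarily close to zero while the gain is strictly positive, each correction term carries an exponent strictly greater than $2k$, yielding the desired $o(\e^{2k})$ bound.
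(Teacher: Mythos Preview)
Your proposal is correct and follows essentially the same approach as the paper: cross-testing \eqref{test-1} with $W_{\e,R}$ and \eqref{eq_W_er} with $Z_{\e,R}$, subtracting to obtain exactly the identity you wrote, and then estimating each correction term via Cauchy--Schwarz together with \eqref{ineq_traces_2}, \eqref{eq_asymptotics_norm_L2_Wer}, \eqref{eq_asymptotics_norm_H1_Wer}, and \eqref{eq_norm_ZeR}. The only minor difference is that you read off $\|W_{\e,R}\|_{L^2(\partial B_{\e r_1})}=O(\e^{k+1/2})$ directly from the explicit formula rather than from \eqref{eq_asymptotics_norm_H1_Wer} and the trace inequality, which slightly sharpens one exponent but is immaterial for the conclusion.
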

\begin{proof}
  Let us choose $\varphi=W_{\e,R}$ in \eqref{test-1}. By
  \eqref{def_torsion_neumann} we have
\begin{multline*}
    \int_{B_R\setminus B _{\e r_1}} (\nabla Z_{\e,R}\cdot \nabla
    W_{\e,R}
    + c_\alpha Z_{\e,R} W_{\e,R} ) \dx\\
    +\alpha\int_{\partial (B_R\setminus B_{\e r_1})} Z_{\e,R}W_{\e,R}
    \ds
    -\alpha\int_{\partial B_{\e r_1}}      P_k^{u_n^0,0}W_{\e,R} \ds=\mc{T}_\e^R.
\end{multline*}
On the other hand, choosing $v= Z_{\e,R}$ in \eqref{eq_W_er}, in view
of \eqref{eq_torsion_pn_ball} we obtain
\begin{equation*}
\int_{B_R \setminus  B _{\e r_1}} \nabla  W_{\e,R} \cdot
\nabla
Z_{\e,R}  \dx+\alpha \int_{\partial B_R}    W_{\e,R} Z_{\e,R} \ds
+\alpha \int_{\partial B_{\e r_1}} P_k^{u_n^0,0} Z_{\e,R} \ds=\tr_{\e,R}.
\end{equation*}
Hence
\begin{align*}
  \tr_{\e,R}-\mc{T}_\e^R&=\alpha \int_{\partial B_{\e r_1}}
  P_k^{u_n^0,0} \left( Z_{\e,R}+W_{\e, R}\right) \ds\\
&\qquad   -\alpha\int_{\partial B_{\e r_1}} Z_{\e,R}W_{\e,R} \ds- c_\alpha
  \int_{B_R\setminus B_{\e r_1}} Z_{\e,R} W_{\e,R} \dx.  
 \end{align*}  
 By the Cauchy-Schwarz inequality, \eqref{ineq_traces_2},
 \eqref{eq_asymptotics_norm_H1_Wer} and \eqref{eq_norm_ZeR}, there
 exists a constant 
 $C>0$, which does not depend on $\e$, such that
\begin{multline*}
  \bigg|\int_{\partial B_{\e r_1}} P_k^{u_n^0,0} \left( Z_{\e,R}+W_{\e,
      R}\right) \ds\bigg| \le C \e^{k+\frac{1}{2}} \left(\int_{\partial
      B_{\e r_1}}
    \left( Z_{\e,R}+W_{\e, R}\right)^2 \ds \right)^\frac{1}{2}\\
  \le C \sqrt{C_\delta} \e^{k+1-\frac{\delta}{2}}
  \big(\|Z_{\e,R}\|_{H^1(B_R\setminus \e \overline{B_{r_1}})} +\|W_{\e,
    R}\|_{H^1(B_R\setminus \e
    \overline{B_{r_1}})}\big)=O(\e^{2k+1-\delta})
  \quad \text{as } \e \to 0^+,
\end{multline*}
for any $\delta \in (0,1)$.  Similarly, by the Cauchy-Schwarz
inequality, \eqref{ineq_traces_2}, \eqref{eq_asymptotics_norm_H1_Wer}
and \eqref{eq_norm_ZeR},
\begin{equation*}
  \int_{\partial B_{\e r_1}} Z_{\e,R}W_{\e,R} \ds
  =O(\e^{2k+1-\frac{3}{2}\delta}) \quad \text{as } \e \to 0^+,
\end{equation*}
for any $\delta \in (0,1)$.  Furthermore, by the Cauchy-Schwarz
inequality, \eqref{eq_asymptotics_norm_L2_Wer} and
\eqref{eq_norm_ZeR},
\begin{equation*}
  \int_{B_R\setminus B_{\e r_1}} Z_{\e,R} W_{\e,R} \dx =
  O(\e^{2k+1-\frac{\delta}{2}})
  \quad \text{as } \e \to 0^+,
\end{equation*}
for any $\delta \in (0,1)$. Choosing $\delta<\frac23$, we thereby prove
\eqref{eq_torsion_Ver_Wer}.
\end{proof}

Let us now consider the problem as in
Proposition~\ref{propo_tau_domain} for $\Omega=B_R$ and
$\Sigma=B_{r_1}$, for some $R>r_1$. Let
$V_{\e, R}\in H^1(B_R\setminus \e\overline{B_{r_1}})$ be the unique
solution of the problem
\begin{equation*}
\begin{dcases}
  -\Delta V_{\e, R} +c_\alpha V_{\e, R} = 0, & \text{in } B_R\setminus
  \e\overline{B_{r_1}}, \\
  \partial_\nu V_{\e, R}  +    \alpha V_{\e, R} =0, & \text{on } \partial B_R,\\
  \partial_\nu V_{\e, R} + \alpha V_{\e, R} =
  \partial_\nu {u_n^0}  +    \alpha {u_n^0}, & \text{on } \e\partial B_{r_1},\\
\end{dcases}
\end{equation*}
that is, for every $\varphi \in  H^1(B_R\setminus \e\overline{B_{r_1}})$,
\begin{align*}
  \int_{B_R\setminus B_{\e r_1}} (\nabla V_{\e, R}\cdot \nabla
  \varphi +
  c_\alpha V_{\e, R} \varphi ) \dx +
  \alpha\int_{\partial (B_R\setminus B_{\e r_1})} V_{\e,
  R}\,\varphi \ds
  = \int_{\partial B_{\e r_1}} \left(\partial_\nu {u_n^0}  +    \alpha
  {u_n^0}\right)
  \varphi \ds.
\end{align*}
Also, we denote the corresponding torsional rigidity as $\mc{T}_{\e ,
  R}$, so that 
\begin{align}
\notag	\mc{T}_{\e,R}&= \int_{\partial B_{\e r_1}} \left(\partial_\nu {u_n^0}  +    \alpha {u_n^0}\right) V_{\e, R} \ds\\
  \label{final_torsion} &=
                          \int_{B_R\setminus  B_{\e r_1}} (|\nabla V_{\e,
                          R}|^2 +
                          c_\alpha  V_{\e, R}^2 ) \dx +\alpha
                          \int_{\partial (B_R\setminus  B_{\e r_1})}  V_{\e, R}^2 \ds.	
\end{align}

\begin{proposition}\label{prop_torsion_ball_with_u}
Let $N = 2$, $u_n^0(0)=0$, and  $k\geq1$ be the vanishing order of
$u_n^0$ at $0$. For every $R>r_1$, we have 
\begin{align}\label{eq_torsion_Zer_Ver}
\mc{T}_{\e,R}= \tr_{\e,R}+O(\e^{2k+1-\frac\delta2}) \quad \text{as }\e\to 0^+.
\end{align}
\end{proposition}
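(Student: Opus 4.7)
The plan is to mimic the strategy of Proposition \ref{prop_torsion_ball_with_P}, comparing $V_{\e,R}$ and $Z_{\e,R}$ by direct subtraction. Let $W_\e := V_{\e,R}-Z_{\e,R}$. By linearity of the defining equations, $W_\e$ is the unique element of $H^1(B_R\setminus \e\overline{B_{r_1}})$ solving
\[
q_{\e,R}(W_\e,\varphi)=\int_{\partial B_{\e r_1}} f_\e\,\varphi\ds \quad \text{for all } \varphi\in H^1(B_R\setminus \e\overline{B_{r_1}}),
\]
where $q_{\e,R}$ denotes the natural Robin bilinear form on $B_R\setminus \e\overline{B_{r_1}}$ and $f_\e:=\partial_\nu(u_n^0-P_k^{u_n^0,0})+\alpha(u_n^0-P_k^{u_n^0,0})$ on $\e\partial B_{r_1}$.

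My first step would be an a priori estimate for $\|W_\e\|_{H^1}$. Since $P_k^{u_n^0,0}$ is the Taylor polynomial of $u_n^0$ of order $k$ at $0$, one has $|u_n^0-P_k^{u_n^0,0}|=O(|x|^{k+1})$ and $|\nabla(u_n^0-P_k^{u_n^0,0})|=O(|x|^k)$ as $|x|\to 0$. Hence $|f_\e|=O(\e^k)$ on $\partial B_{\e r_1}$ and $\|f_\e\|^2_{L^2(\partial B_{\e r_1})}=O(\e^{2k+1})$. Testing the equation for $W_\e$ against $W_\e$, Cauchy-Schwarz combined with the two-dimensional trace estimate \eqref{ineq_traces_2} and the coercivity \eqref{ineq_norm_qe} yields $\|W_\e\|^2_{H^1}=O(\e^{2k+2-\delta})$ for every $\delta\in(0,1)$.

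Next, I would expand
\[
\mc{T}_{\e,R}=q_{\e,R}(Z_{\e,R}+W_\e,Z_{\e,R}+W_\e)=\tr_{\e,R}+2\,q_{\e,R}(Z_{\e,R},W_\e)+q_{\e,R}(W_\e,W_\e).
\]
The self-interaction $q_{\e,R}(W_\e,W_\e)$ is already $O(\e^{2k+2-\delta})$. For the cross term, testing \eqref{test-1} with $W_\e$ gives
\[
q_{\e,R}(Z_{\e,R},W_\e)=\int_{\partial B_{\e r_1}}(\partial_\nu P_k^{u_n^0,0}+\alpha P_k^{u_n^0,0})\,W_\e\ds.
\]
Since $P_k^{u_n^0,0}$ is $k$-homogeneous, $|\partial_\nu P_k^{u_n^0,0}+\alpha P_k^{u_n^0,0}|=O(\e^{k-1})$ on $\partial B_{\e r_1}$, and its $L^2$-norm there is $O(\e^{k-1/2})$; a further application of \eqref{ineq_traces_2} to bound $\|W_\e\|_{L^2(\partial B_{\e r_1})}$ in terms of $\|W_\e\|_{H^1}$ then delivers, via Cauchy-Schwarz, $|q_{\e,R}(Z_{\e,R},W_\e)|=O(\e^{2k+1-\delta})$. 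This is the dominant remainder and, upon relabeling $\delta$, gives \eqref{eq_torsion_Zer_Ver}.

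The main technical obstacle I foresee is the careful bookkeeping of the $\e^{1-\delta}$ loss intrinsic to the two-dimensional trace inequality \eqref{ineq_traces_2}, in contrast to the clean $\e$ factor available for $N\geq 3$ from \eqref{ineq_traces_N}. This loss enters twice, once in the energy estimate for $W_\e$ and once in the bound on the cross term, and one must verify that it is still compensated by the vanishing rates of $u_n^0-P_k^{u_n^0,0}$ and of $P_k^{u_n^0,0}$ itself near the origin, leaving a net rate $\e^{2k+1-\delta}$ with $\delta>0$ arbitrarily small.
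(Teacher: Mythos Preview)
Your proof is correct and in fact more streamlined than the paper's. Both arguments ultimately hinge on the estimate $\|V_{\e,R}-Z_{\e,R}\|_{H^1(B_R\setminus\e\overline{B_{r_1}})}=O(\e^{k+1-\delta/2})$, but you obtain it in one step from the equation satisfied by the difference and the Taylor remainder $u_n^0-P_k^{u_n^0,0}=O(|x|^{k+1})$, $\nabla(u_n^0-P_k^{u_n^0,0})=O(|x|^k)$. The paper instead introduces an auxiliary function $Y_{\e,R}$ solving the analogous problem with boundary datum $\partial_\nu P_{k+1}^{u_n^0,0}+\alpha P_{k+1}^{u_n^0,0}$, estimates $\|V_{\e,R}-Z_{\e,R}-Y_{\e,R}\|_{H^1}=O(\e^{k+2-\delta/2})$ and $\|Y_{\e,R}\|_{H^1}=O(\e^{k+1-\delta/2})$ separately, and only then reassembles the bound on $V_{\e,R}-Z_{\e,R}$. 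For the final step the paper writes $\mc{T}_{\e,R}-\tr_{\e,R}=q_\e(V_{\e,R}-Z_{\e,R},V_{\e,R}+Z_{\e,R})$ and applies Cauchy--Schwarz for $q_\e$, which requires the sharp bound $\|Z_{\e,R}\|_{H^1}=O(\e^k)$ drawn from Proposition~\ref{prop_torsion_ball_with_P}; you avoid this dependence entirely by reading the cross term $q_{\e,R}(Z_{\e,R},W_\e)$ off the weak formulation \eqref{test-1} and estimating the boundary integral directly. Your route is therefore both shorter and more self-contained, at no cost in the final exponent.
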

\begin{proof}
  Let $P_k^{u_n^0,0}, P_{k+1}^{u_n^0,0}$ be as in
  \eqref{def_P_ui}. Let us denote with $Y_{\e, R}$ the unique weak
  solution of the problem
\begin{equation*}
\begin{dcases}
  -\Delta Y_{\e, R} +c_\alpha Y_{\e, R} = 0, & \text{in } B_R\setminus
  \e\overline{B_{r_1}}, \\
  \partial_\nu Y_{\e, R}  +    \alpha Y_{\e, R} =0, & \text{on } \partial B_R,\\
  \partial_\nu Y_{\e, R} + \alpha Y_{\e, R} =\partial_\nu
  P_{k+1}^{u_n^0,0} +
  \alpha P_{k+1}^{u_n^0,0}, & \text{on } \e\partial B_{r_1}.\\
\end{dcases}
\end{equation*}
Let $\Psi_\e=V_{\e, R}- Z_{\e,R}- Y_{\e, R}\in H^1(B_R\setminus
\e\overline{B_{r_1}})$.
Then, for every $\varphi\in H^1(B_R\setminus \e\overline{B_{r_1}})$,
\begin{multline*}
     \int_{B_R\setminus B_{\e r_1}} (\nabla\Psi_\e\cdot \nabla \varphi
     +
     c_\alpha \Psi_\e \varphi ) \dx +\alpha
     \int_{\partial (B_R\setminus B_{\e r_1})} \Psi_\e\,\varphi \ds \\
     = \int_{\partial B_{\e r_1}} \left(\partial_\nu
       \big({u_n^0}-P_{k}^{u_n^0,0}
       -P_{k+1}^{u_n^0,0}\big)  +    \alpha
       \big({u_n^0}-P_{k}^{u_n^0,0}-P_{k+1}^{u_n^0,0}\big)\right)\varphi \ds.
\end{multline*}
In particular, testing with $\varphi=\Psi_\e$,  we have
\begin{align*}
     C_\alpha^{-1}\|\Psi_\e\|_{H^1(B_R\setminus
       \e\overline{B_{r_1}})}^2&\leq
     \int_{B_R\setminus B_{\e r_1}} (|\nabla\Psi_\e|^2 + c_\alpha
     \Psi_\e^2) \dx +
                                 \alpha\int_{\partial (B_R\setminus B_{\e r_1})} \Psi_\e^2 \ds \\
  &=
\int_{\partial B_{\e r_1}} \left(\partial_\nu
       \big({u_n^0}-P_{k}^{u_n^0,0}
       -P_{k+1}^{u_n^0,0}\big)  +    \alpha
       \big({u_n^0}-P_{k}^{u_n^0,0}-P_{k+1}^{u_n^0,0}\big)\right) \Psi_\e \ds
\end{align*}
by \eqref{ineq_norm_qe}.  Thanks to the Cauchy-Schwarz inequality,
the Taylor expansion of $u_n^0$ at $0$, and
\eqref{ineq_traces_2}, there exists a constant $C>0$, independent of
$\e$, such that
\begin{multline*}
  \int_{\partial B_{\e r_1}}
  \left(\partial_\nu
    \big({u_n^0}-P_{k}^{u_n^0,0}-P_{k+1}^{u_n^0,0}\big) + \alpha
    \big({u_n^0}-P_{k}^{u_n^0,0}-P_{k+1}^{u_n^0,0}\big)\right)\Psi_\e
  \ds \\
  \leq C \e^{k+\frac{3}{2}}\left(\int_{\partial
      B_{\e r_1}}\Psi_\e^2\ds\right)^{\frac{1}{2}}\leq
  C\sqrt{C_\delta}\e^{k+2-\frac{\delta}{2}}\|\Psi_\e\|_{H^1(B_R\setminus \e\overline{B_{r_1}})}
\end{multline*}
for any $\delta \in (0,1)$. Hence,
\begin{align}\label{fnl-1}
  \|\Psi_\e\|_{H^1(B_R\setminus \e\overline{B_{r_1}}) }=
  O(\e^{k+2-\frac{\delta}{2}})\quad \text{as } \e\to 0^+.
\end{align}
Arguing as in Proposition \ref{prop_estimates_torsion}, by
\eqref{ineq_norm_qe} we obtain
\begin{align}\label{fnl-2}
\|Y_{\e, R}\|_{H^1(B_R\setminus
  \e\overline{B_{r_1}})}=O(\e^{k+1-\frac{\delta}{2}})\quad
  \text{as } \e\to 0^+,
\end{align}
for any $\delta\in (0,1)$.
By  \eqref{ineq_norm_qe} and  \eqref{eq_torsion_pn_ball}, we have 
\begin{equation*}
\|Z_{\e,R}\|_{H^1(B_R\setminus \e\overline{B_{r_1}})}
=O(\widetilde{\mc{T}}_{\e,R}^{1/2})\quad\text{as }\e\to0^+.
\end{equation*}
Hence, by \eqref{eq_asymptotics_torsion_Wer} and Proposition
\ref{prop_torsion_ball_with_P}, it follows that
\begin{equation}\label{fnl-3}
\|Z_{\e,R}\|_{H^1(B_R\setminus \e \overline{B_{r_1}})} = O(\e^k) \quad \text{as } \e \to 0^+.
\end{equation}
Therefore, by  \eqref{fnl-1}, \eqref{fnl-2} and \eqref{fnl-3}, we have
\begin{align}\label{fnl-4}
    \|V_{\e, R}\|_{H^1(B_R\setminus \e \overline{B_{r_1}})}=O(\e^k) \quad \text{as } \e \to 0^+.
\end{align}
Furthermore, by the  Cauchy-Schwarz inequality
\begin{align*}
    &\|V_{\e, R}-Z_{\e,R}\|_{H^1(B_R\setminus
      \e\overline{B_{r_1}})}^2-
      \|Y_{\e, R}\|_{H^1(B_R\setminus \e\overline{B_{r_1}})}^2\\
    &=\big(\Psi_\e,V_{\e, R}-Z_{\e,R}+Y_{\e,
      R}\big)_{H^1(B_R\setminus \e\overline{B_{r_1}})}
  \\&\leq \|\Psi_\e\|_{H^1(B_R\setminus  \e\overline{B_{r_1}})}
  \left(\|V_{\e, R}\|_{H^1(B_R\setminus  \e\overline{B_{r_1}})}+
  \| Z_{\e,R}\|_{H^1(B_R\setminus  \e\overline{B_{r_1}})}+
  \| Y_{\e, R},\|_{H^1(B_R\setminus  \e\overline{B_{r_1}})}\right),
\end{align*}
so that, in view of \eqref{fnl-1}, \eqref{fnl-2}, \eqref{fnl-3}, and
\eqref{fnl-4},
\begin{align}\label{fnl-5}
  \|V_{\e, R}-Z_{\e,R}\|_{H^1(B_R\setminus
  \e\overline{B_{r_1}})}^2-
  \|Y_{\e, R}\|_{H^1(B_R\setminus \e\overline{B_{r_1}})}^2
  =O(\e^{2k+2-\frac{\delta}{2}}) \quad \text{as } \e \to 0^+,
\end{align}
for any $\delta\in (0,1)$.
From \eqref{fnl-2} and \eqref{fnl-5} it follows that
\begin{align}\label{fnl-6}
  \|V_{\e, R}-Z_{\e,R}\|_{H^1(B_R\setminus \e\overline{B_{r_1}})}=
  O(\e^{k+1-\frac{\delta}{2}}) \quad \text{as } \e \to 0^+,
\end{align}
for any $\delta\in (0,1)$.

By the  Cauchy-Schwarz inequality for $q_\e$,
  \eqref{ineq_qe_norm} for $\Omega=B_R$, \eqref{fnl-6}, \eqref{fnl-3},
  and \eqref{fnl-4}
\begin{align*}
  q_\e(V_{\e, R}-Z_{\e,R},V_{\e, R}+Z_{\e, R})
&\leq 
\sqrt{q_\e(V_{\e, R}-Z_{\e,R},V_{\e, R}-Z_{\e, R})}
\sqrt{q_\e(V_{\e, R}+Z_{\e,R},V_{\e, R}+Z_{\e, R})}\\
&  =O(\e^{2k+1-\frac\delta2})
  \quad \text{as } \e \to 0^+,
\end{align*}
for any $\delta\in (0,1)$, where $q_\e$ is as in \eqref{def_qe} for
the domain $B_R\setminus \e \overline{B_{r_1}}$.  Hence, by
\eqref{eq_torsion_pn_ball} and \eqref{final_torsion},
\begin{equation*}
      |\tr_{\e,R}-\mc{T}_{\e,R}|=|q_\e(Z_{\e, R},Z_{\e,
        R})-q_\e(V_{\e, R},V_{\e, R})|=|q_\e(V_{\e, R}-Z_{\e,R},V_{\e, R}+Z_{\e, R})|
      =O(\e^{2k+1-\frac\delta2})
    \end{equation*}
  as $\e\to0^+$, for any $\delta\in (0,1)$, thus proving \eqref{eq_torsion_Zer_Ver}.
\end{proof}

In order to complete the proof of Theorem
\ref{theor_asympotic_eigen_precise_round_u_0_dim_2}, we need the
following general lemma concerning the comparison of quadratic forms
with boundary terms.
For any open and connected Lipschitz  set $\Lambda\subset\R^N$ and any
constant $C>0$, we define
\begin{equation*}
  q_{\Lambda, C}: H^1(\Lambda)\to\R,\quad
  q_{\Lambda, C}(u):=\int_{\Lambda} \left(|\nabla u|^2+C u^2
  \right)\dx
  +\alpha\int_{\partial \Lambda} u^2 \ds.
\end{equation*}
\begin{lemma}\label{lemma_domains}
  Let $\Lambda_1, \Lambda_2$ be open and connected Lipschitz sets in
  $\R^N$ such that $\overline{\Lambda_1}\subset \Lambda_2$. For every
  $\alpha>0$ there exists a constant
  $d=d(\alpha,\Lambda_1,\Lambda_2)>0$, which depends only on
  $\Lambda_1$, $\Lambda_2$, and $\alpha$, such that
\begin{equation}\label{ineq_domains}
q_{\Lambda_1, C} (u)\le q_{\Lambda_2, C}(u)
\end{equation}
 for every
  $u \in H^1(\Lambda_2)$ and $C\geq d$.
\end{lemma}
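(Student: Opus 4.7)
The plan is to rewrite the difference $q_{\Lambda_2,C}(u)-q_{\Lambda_1,C}(u)$ as a bulk contribution on the annular region $A:=\Lambda_2\setminus\overline{\Lambda_1}$ plus a boundary discrepancy, and then absorb the unfavorable boundary term by means of a sharpened trace inequality. Since $\overline{\Lambda_1}\subset\Lambda_2$, additivity of the integrals gives
\begin{equation*}
q_{\Lambda_2,C}(u)-q_{\Lambda_1,C}(u)=\int_{A}\bigl(|\nabla u|^2+Cu^2\bigr)\dx+\alpha\int_{\partial\Lambda_2}u^2\ds-\alpha\int_{\partial\Lambda_1}u^2\ds.
\end{equation*}
Because $\alpha>0$, the boundary term on $\partial\Lambda_2$ is non-negative and can be discarded; hence it suffices to dominate $\alpha\int_{\partial\Lambda_1}u^2\ds$ by the bulk integral on $A$.

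The key step will be an Ehrling-type trace inequality on the bounded Lipschitz domain $A$, whose boundary contains $\partial\Lambda_1$. Since both the trace operator $H^1(A)\to L^2(\partial\Lambda_1)$ and the embedding $H^1(A)\hookrightarrow L^2(A)$ are compact, a standard contradiction argument produces, for every $\varepsilon>0$, a constant $C_\varepsilon>0$ depending only on $\varepsilon$, $\Lambda_1$, and $\Lambda_2$, such that
\begin{equation*}
\int_{\partial\Lambda_1}v^2\ds\le\varepsilon\int_{A}|\nabla v|^2\dx+C_\varepsilon\int_{A}v^2\dx\quad\text{for every }v\in H^1(A).
\end{equation*}
Alternatively, one may derive this estimate constructively by parametrizing a uniform tubular neighborhood $N_\rho\subset A$ of $\partial\Lambda_1$ in normal coordinates, applying the fundamental theorem of calculus along each normal segment together with Young's inequality, and integrating in the transverse direction; this yields the same inequality with $\varepsilon$ proportional to $\rho$ and $C_\varepsilon$ proportional to $\rho^{-1}$.

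To conclude, I would apply the trace inequality above with the choice $\varepsilon=1/\alpha$ and set $d:=\alpha\,C_{1/\alpha}$. Then, for every $C\ge d$ and every $u\in H^1(\Lambda_2)$, whose restriction to $A$ lies in $H^1(A)$,
\begin{equation*}
\alpha\int_{\partial\Lambda_1}u^2\ds\le\int_{A}|\nabla u|^2\dx+\alpha\, C_{1/\alpha}\int_{A}u^2\dx\le\int_{A}\bigl(|\nabla u|^2+Cu^2\bigr)\dx,
\end{equation*}
which, combined with $\alpha\int_{\partial\Lambda_2}u^2\ds\ge 0$, gives precisely \eqref{ineq_domains}. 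The main technical point, which is where the work sits, is the $\varepsilon$-flexibility of the trace inequality: the gradient coefficient must be shrinkable at will at the cost of enlarging the $L^2$ coefficient, and it is exactly this flexibility that enables the absorption to succeed uniformly in $\alpha$, no matter how large. The Lipschitz regularity of $\partial\Lambda_1$ together with the strict inclusion $\overline{\Lambda_1}\subset\Lambda_2$ is essential for the tubular neighborhood/compact trace argument to apply.
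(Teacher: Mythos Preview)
Your proof is correct and follows essentially the same route as the paper: both begin with the same additive decomposition over $A=\Lambda_2\setminus\overline{\Lambda_1}$ and then absorb the unfavorable boundary term via a trace inequality on $A$ with a small gradient coefficient. The paper simply invokes \cite[Theorem~18.1]{L_book_sobolev} for this step (and, slightly less economically, bounds both $\partial\Lambda_1$ and $\partial\Lambda_2$ contributions by $|\alpha|$ times their sum), whereas you spell out the Ehrling-type mechanism explicitly and exploit $\alpha>0$ to discard the $\partial\Lambda_2$ term; the substance is the same.
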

\begin{proof}
Inequality \eqref{ineq_domains} can be rewritten as
\begin{equation*}
  \int_{\Lambda_2\setminus \Lambda_1} \left(|\nabla u|^2+C u^2
  \right)\dx +
  \alpha\int_{\partial \Lambda_2} u^2 \ds- \alpha\int_{\partial
    \Lambda_1}
  u^2\ds\ge 0
\end{equation*}
for any $u\in H^1(\Lambda_2)$. Hence, to conclude it is enough to show
that there exists a constant $d>0$, depending only on  $\Lambda_1$,
$\Lambda_2$, and $\alpha$, such that 
\begin{equation*}
\int_{\Lambda_2 \setminus\Lambda_1} \left(|\nabla u|^2+d u^2 \right)\dx
\ge
|\alpha| \left(\int_{\partial \Lambda_2} u^2 \ds+\int_{\partial
    \Lambda_1} u^2
  \ds\right)
\end{equation*}
for every $u\in H^1(\Lambda_2)$. The above inequality holds in view of
\cite[Theorem 18.1]{L_book_sobolev}.
\end{proof}

\begin{proposition}\label{prop_round_hole_dim_2}
  Let $N = 2$, $u_n^0(0)=0$ and $k$ be the vanishing order of $u_n^0$
  at $0$.  Then
 \begin{equation}\label{limit_torsion_annuli}
 \mc{T}_\e=k \pi r_1^{2k} \left(\left|
     \pd{^ku_n^0}{x^k_1}(0)\right|^2+\frac{1}{k^2}
   \left|\pd{^ku_n^0}{x^{k-1}_1\partial x_2}(0)\right|^2\right) \e^{2k}+o( \e^{2k})
 \quad \text{as } \e \to 0^+.
\end{equation}   
\end{proposition}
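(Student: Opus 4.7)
The strategy is to reduce the claim to the asymptotic expansion of the ball torsional rigidity $\mc{T}_{\e,R}$ already derived in Propositions \ref{prop_asymptotics_torsion_neumann}, \ref{prop_torsion_ball_with_P}, and \ref{prop_torsion_ball_with_u}, by showing
\begin{equation*}
\mc{T}_\e = \mc{T}_{\e,R} + o(\e^{2k}) \quad \text{as } \e \to 0^+
\end{equation*}
for a fixed $R\in(r_1,r_0)$ with $R\neq -k/\alpha$; note $\overline{B_R}\subset\Omega$ by \eqref{def_Sigma}. Once this is established, chaining the three aforementioned propositions yields \eqref{limit_torsion_annuli}.

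For the upper bound $\mc{T}_\e\leq \mc{T}_{\e,R}$, I would invoke Lemma \ref{lemma_domains} with $\Lambda_1 = B_R\setminus\e\overline{B_{r_1}}$ and $\Lambda_2=\Omega_\e$, enlarging $c_\alpha$ if necessary. Although $\Lambda_1$ and $\Lambda_2$ share the inner boundary $\e\partial B_{r_1}$, in the rewriting used in the proof of that lemma this shared part cancels, leaving only $\alpha\int_{\partial\Omega}u^2\,\ds - \alpha\int_{\partial B_R}u^2\,\ds$, which is absorbed by $\int_{\Omega\setminus\overline{B_R}}(|\nabla u|^2+c_\alpha u^2)\dx$ via the trace inequality on $\Omega\setminus\overline{B_R}$. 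Hence $q_{\e,R}(u|_{\Lambda_1},u|_{\Lambda_1})\leq q_\e(u,u)$ for every $u\in H^1(\Omega_\e)$, and since $L_\e(u)$ depends only on the trace of $u$ on $\partial\Sigma_\e$, the variational characterization \eqref{eq_chara_torsion} immediately yields the desired inequality.

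For the lower bound, I would choose a cutoff $\eta\in C_c^\infty(B_R)$ with $\eta\equiv 1$ on $B_{(R+r_1)/2}$ and test $\mc{T}_\e$ against $\eta V_{\e,R}$, extended by zero to $\Omega_\e$. Since $\eta\equiv 1$ near $\partial\Sigma_\e$ and vanishes on $\partial\Omega$, one has $L_\e(\eta V_{\e,R}) = L_\e(V_{\e,R}) = \mc{T}_{\e,R}$ by \eqref{final_torsion}, while a direct computation gives
\begin{equation*}
q_\e(\eta V_{\e,R},\eta V_{\e,R}) = \mc{T}_{\e,R} - \alpha\int_{\partial B_R}V_{\e,R}^2\,\ds + E_\e,\qquad |E_\e|\leq C\|V_{\e,R}\|^2_{H^1(B_R\setminus B_{(R+r_1)/2})}.
\end{equation*}
The variational characterization then gives $\mc{T}_\e\geq \mc{T}_{\e,R}^2/q_\e(\eta V_{\e,R},\eta V_{\e,R})$, and it suffices to prove that $\int_{\partial B_R}V_{\e,R}^2\,\ds$ and $\|V_{\e,R}\|^2_{H^1(B_R\setminus B_{(R+r_1)/2})}$ are both $o(\e^{2k})$.

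This last estimate is the main technical hurdle, since the crude global bound $\|V_{\e,R}\|_{H^1}=O(\e^k)$ is only $O(\e^{2k})$ after squaring. The idea is to exploit the explicit formula \eqref{eq_Wer_explicit}: the $r$-dependent factor of $W_{\e,R}$ is uniformly bounded on $[(R+r_1)/2,R]$ for small $\e$, so $|W_{\e,R}|+|\nabla W_{\e,R}|=O(\e^{2k})$ on the far annulus, and thus $\|W_{\e,R}\|_{H^1(B_R\setminus B_{(R+r_1)/2})}+\|W_{\e,R}\|_{L^2(\partial B_R)}=O(\e^{2k})$. An energy estimate for $Z_{\e,R}-W_{\e,R}$, which solves $-\Delta(Z-W)+c_\alpha(Z-W)=-c_\alpha W$ with Robin data of size $O(\e^k)$ on $\e\partial B_{r_1}$, combined with \eqref{ineq_norm_qe} and the trace inequality \eqref{ineq_traces_2}, yields $\|Z_{\e,R}-W_{\e,R}\|_{H^1}=O(\e^{k+1-\delta/2})$ for every $\delta\in(0,1)$. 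Together with the bound $\|V_{\e,R}-Z_{\e,R}\|_{H^1}=O(\e^{k+1-\delta/2})$ from the proof of Proposition \ref{prop_torsion_ball_with_u}, this gives $\|V_{\e,R}-W_{\e,R}\|_{H^1}=O(\e^{k+1-\delta/2})$ globally, so $\|V_{\e,R}\|^2_{H^1(B_R\setminus B_{(R+r_1)/2})}=O(\e^{2k+2-\delta})=o(\e^{2k})$, and the trace bound on $\partial B_R$ follows from the standard trace inequality on $B_R\setminus\overline{B_{(R+r_1)/2}}$. The two comparisons then yield $\mc{T}_\e=\mc{T}_{\e,R}+o(\e^{2k})$, and the three earlier propositions complete the proof.
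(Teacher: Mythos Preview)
Your argument is correct, but it takes a more laborious route than the paper's. The paper exploits the crucial observation that the leading coefficient in the expansion of $\mc{T}_{\e,R}$ is \emph{independent of $R$}. It therefore chooses \emph{two} radii $R_1<R_2$ with $\overline{B_{R_1}}\subset\Omega$ and $\overline{\Omega}\subset B_{R_2}$, applies Lemma~\ref{lemma_domains} twice (once with $\Lambda_1=B_{R_1}$, $\Lambda_2=\Omega$ and once with $\Lambda_1=\Omega$, $\Lambda_2=B_{R_2}$), and obtains directly from the variational characterization \eqref{eq_chara_torsion} the sandwich
\[
\mc{T}_{\e,R_2}\le \mc{T}_\e\le \mc{T}_{\e,R_1}.
\]
Since both ends have the same leading term $k\pi r_1^{2k}(\cdots)\e^{2k}$, the conclusion is immediate. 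Your upper bound is essentially the paper's argument for the inner ball; the difference lies in the lower bound, where you work with a single $R<r_0$ and manufacture a competitor for $\mc{T}_\e$ via a cutoff of $V_{\e,R}$. This forces you to show that $V_{\e,R}$ is small (of order $o(\e^k)$ in $H^1$) on the outer annulus, which you obtain by comparing $V_{\e,R}$ to the explicit function $W_{\e,R}$ through $Z_{\e,R}$. All of this is correct and gives slightly more information (localization of $V_{\e,R}$ near the hole), but the paper's two-ball squeeze avoids the cutoff, the far-annulus decay estimates, and the $Z_{\e,R}-W_{\e,R}$ energy estimate entirely, at the modest cost of requiring $\Omega$ to be bounded so that an enclosing ball $B_{R_2}$ exists.
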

\begin{proof}
  For any $R>r_1$ with $R\neq -k/\alpha$, by
  Propositions~\ref{prop_torsion_ball_with_u} and
  \ref{prop_torsion_ball_with_P} we have
\begin{align*}
  \mc{T}_{\e,R}=\tr_{\e,R}+O(\e^{2k+1-\frac\delta2})=\mc{T}_\e^R+o(\e^{2k}) \quad \text{ as }\e\to 0^+.
\end{align*}
Hence, by Proposition~\ref{prop_asymptotics_torsion_neumann}, for any
$R>r_1$ with $R\neq -k/\alpha$ we have
\begin{equation}\label{eq:exp-tepsR}
\mc{T}_{\e,R}=k \pi r_1^{2k} \left(\left|
    \pd{^ku_n^0}{x^k_1}(0)\right|^2+\frac{1}{k^2}
  \left|\pd{^ku_n^0}{x^{k-1}_1\partial x_2}(0)\right|^2\right) \e^{2k}+o( \e^{2k}) \quad \text{as } \e \to 0^+.
\end{equation}
Let $R_2>R_1>r_0>r_1$ be such that
$\overline{B_{R_1}}\subset \Omega$ and $\overline{\Omega} \subset B_{R_2}$ and $R_1,R_2\neq -k/\alpha$.
Using the characterization of $\mc{T}_{\e,R_2}, \mc{T}_{\e,R_1},
  \mc{T}_{\e}$  given in \eqref{eq_chara_torsion} and choosing
  $c_\alpha$  larger than the constant $d$ given by Lemma
  \ref{lemma_domains}  applied first with $\Lambda_1=B_{R_1}$ and
  $\Lambda_2=\Omega$, and  then 
  with $\Lambda_1=\Omega$ and
  $\Lambda_2=B_{R_2}$, we obtain that
\begin{align*}
    \mc{T}_{\e,R_2}\leq \te \leq \mc{T}_{\e,R_1}.
\end{align*}
Estimate \eqref{limit_torsion_annuli} finally follows from
\eqref{eq:exp-tepsR} by comparison.
\end{proof}

\begin{proof}[Proof of Theorem \ref{theor_asympotic_eigen_precise_round_u_0_dim_2}]
 Without loss of generality, we assume $x_0=0$.
  We can proceed as in the proof of Theorem
  \ref{theor_asympotic_eigen_precise_round_u_0_dim_big} to obtain that
\begin{equation*}
\int_{B_{\e r_1}} |\nabla P_k^{u_n^0,0}|^2 \, \dx=k  r_1^{2k}\e^{2k}\int_{\partial B_1} |P_k^{u_n^0,0}|^2  \ds .
\end{equation*}
Then Theorem \ref{theor_asympotic_eigen_precise_round_u_0_dim_2}
follows from Theorem \ref{theo_eigen_var_abs}, Proposition
\ref{prop_asympotic_sigma_and_boundary_sigma}, and Proposition
\ref{prop_round_hole_dim_2}.
\end{proof}

\bigskip\noindent {\bf Acknowledgments.} 
The authors are supported by the MUR-PRIN project no. 20227HX33Z 
``Pattern formation in nonlinear phenomena'' granted by the European
Union - Next Generation EU.
V. Felli and G. Siclari are members of GNAMPA-INdAM.

\appendix
\section{} \label{sec_appendix}

In this appendix we present a simplified version of \cite[Theorem
3.1]{BS_quantitative}, adapted to our case, specifically the removal
of small sets in some bounded domain $\Omega\subset \R^N$.  An
  alternative proof of this can also be obtained quite directly  by
  applying \cite[Lemma A.1]{FLO_Neumann}, which revisits the
  well-known \emph{Lemma on small eigenvalues} by Y. Colin de Verdi\`ere
  \cite{ColindeV1986} in the context of simple eigenvalues.

\begin{theorem}\label{theorem_expansion_very_abs}
Let $\Omega\subset \R^N$ be a bounded open Lipschitz set. Let
  $\{K_\e\}_{\e \in [0,1]}$ be a family of compact sets such that
\begin{equation*}
  K_\e \subset \Omega \quad \text{for every } \e \in [0,1], \quad
  K_0=\emptyset,
  \quad \lim_{\e\to 0^+}|K_\e|_N=0,
\end{equation*}
and
  \begin{equation}\label{eq:comp-emb}
    \text{the embedding $H^1(\Omega_\e)\subset L^2(
      \Omega_\e)$ is compact for every $\e \in [0,1]$,}
  \end{equation}
where $\Omega_\e:= \Omega \setminus K_\e$.
For every $\e\in[0,1]$, let  $q_\e : H^1(\Omega_\e) \times
H^1(\Omega_\e) \to \R$
be a symmetric and continuous
bilinear form such that
\begin{equation*}
  q_\e(v,v) \ge \eta
  \norm{v}^2_{H^1(\Omega_\e)}
  \text{ for all } v \in  H^1(\Omega_\e),
\end{equation*}
for some constant $\eta>0$ that is independent of $\e\in[0,1]$.
Letting $\{\rho_j^\e\}_{j\in \mathbb{N}\setminus \{0\}}$ be the
eigenvalues of $q_\e $, repeated according to their multiplicity, we
assume that
\begin{equation*}
\lim_{\e \to 0}\rho_j^\e=\rho_j^0
\end{equation*}
for every $j \in \mathbb{N}\setminus \{0\}$.
Suppose that, for some $n\in \N\setminus\{0\}$,
\begin{equation*}
  \rho_n^0\text{ is simple},
\end{equation*}
and let $u_n^0$ be an associated eigenfunction such that $\int_{\Omega} |u_n^0|^2\dx=1$.
For every $\e\in (0,1]$, let 
\begin{equation*}
  L_\e: H^1(\Omega_\e)\to \R,\quad
  L_\e(v):= q_\e(u_n^0,v)-\rho_n^0 (u_n^0,v)_{L^2(\Omega_\e)}
\end{equation*}
and 
\begin{equation*}
 J_\e: H^1(\Omega_\e)\to \R,\quad J_\e(v):=\frac{1}{2}\, q_\e(v,v) -L_\e(v).
\end{equation*}
Assume that
\begin{equation*}
\lim_{\e \to 0^+} q_\e(V_\e, V_\e) =0,
\end{equation*}
where $V_\e$ is the unique solution to the minimization problem 
\begin{equation*}
  J_\e(V_\e)=\min_{v\in H^1(\Omega_\e)}J_\e(v).
\end{equation*}
Then 
\begin{equation*}
\rho_n^\e-\rho_n^0= 
\frac{\rho_n^0\int_{\Omega_\e} V_\e u_n^0 \dx +O(\norm{V_\e}^2_{L^2(\Omega_\e)})}
{\int_{\Omega_\e}|u_n^0|^2\dx+O(\norm{V_\e}_{L^2(\Omega_\e)})} \quad 
\text{as } \e \to 0^+.
\end{equation*}
Finally, as $\e\to0^+$,
\begin{align*}
  &q_\e\big(u_n^0-V_\e-\Pi_\e(u_n^0-V_\e),u_n^0-V_\e-\Pi_\e(u_n^0-V_\e)\big)
    =O\big(\|V_\e\|^2_{L^2(\Omega_\e)}\big), \\
  &q_\e(u_n^0-\Pi_\e(u_n^0-V_\e),u_n^0-\Pi_\e(u_n^0-V_\e))
    =q_\e(V_\e,V_\e)+O\big(\norm{V_\e}_{L^2(\Omega_\e)}(q_\e(V_\e,V_\e))^{\frac{1}{2}}\big),\\
  &\|u_n^0-\Pi_\e(u_n^0-V_\e)\|_{L^2(\Omega_\e)}^2=O\big(\|V_\e\|^2_{L^2(\Omega_\e)}\big),
\end{align*}
where, for every $\e\in(0,1]$, $\Pi_\e : L^2(\Omega_\e)\to
L^2(\Omega_\e)$ is  the projection map onto the eigenspace of $q_\e$
associated to the eigenvalue $\rho_n^\e$.
\end{theorem}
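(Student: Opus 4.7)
The plan is to treat $u_n^0-V_\e$ as an approximate eigenfunction of $q_\e$ at the level $\rho_n^0$ and then apply a spectral-gap/Rayleigh-quotient argument in the spirit of Colin de Verdière's lemma on small eigenvalues. The starting observation is that, by the weak formulation of the minimization problem, $q_\e(V_\e,v)=L_\e(v)$ for every $v\in H^1(\Omega_\e)$, and so, from the definition of $L_\e$,
\begin{equation*}
q_\e(u_n^0-V_\e,v)=\rho_n^0(u_n^0,v)_{L^2(\Omega_\e)},
\quad\text{i.e.}\quad
q_\e(u_n^0-V_\e,v)-\rho_n^0(u_n^0-V_\e,v)_{L^2(\Omega_\e)}=\rho_n^0(V_\e,v)_{L^2(\Omega_\e)}.
\end{equation*}
Thus $u_n^0-V_\e$ satisfies the eigenvalue equation with residual $\rho_n^0 V_\e$ in $L^2$.

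Thanks to the compact embedding \eqref{eq:comp-emb} and the uniform coercivity, $q_\e$ admits an $L^2(\Omega_\e)$-orthonormal basis $\{\phi_j^\e\}_{j\in\N\setminus\{0\}}$ of eigenfunctions. I would expand $u_n^0-V_\e=\sum_j c_j^\e\phi_j^\e$ with $c_j^\e=(u_n^0-V_\e,\phi_j^\e)_{L^2(\Omega_\e)}$ and, testing the residual identity with $\phi_j^\e$, obtain $(\rho_j^\e-\rho_n^0)c_j^\e=\rho_n^0(V_\e,\phi_j^\e)_{L^2(\Omega_\e)}$. Since $\rho_n^0$ is simple and $\rho_j^\e\to\rho_j^0$ for every $j$, there exist $\gamma>0$ and $\e_0\in(0,1]$ with $|\rho_j^\e-\rho_n^0|\geq\gamma$ for every $j\neq n$ and $\e\in(0,\e_0)$. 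Writing $\tilde w_\e:=\Pi_\e(u_n^0-V_\e)$, Parseval gives
\begin{equation*}
\|(u_n^0-V_\e)-\tilde w_\e\|_{L^2(\Omega_\e)}^2=\sum_{j\neq n}|c_j^\e|^2
\leq\tfrac{(\rho_n^0)^2}{\gamma^2}\|V_\e\|_{L^2(\Omega_\e)}^2.
\end{equation*}
The corresponding $q_\e$-bound follows from $q_\e((u_n^0-V_\e)-\tilde w_\e,\cdot)=\sum_{j\neq n}\rho_j^\e|c_j^\e|^2\leq C\|V_\e\|_{L^2(\Omega_\e)}^2$, because the factor $\rho_j^\e/(\rho_j^\e-\rho_n^0)^2$ is bounded uniformly in $j\neq n$ and in $\e$ (finite for small $j$ by the gap, bounded for large $j$ because $\rho_j^\e\to+\infty$).

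Each of the three error estimates then follows from the decomposition $u_n^0-\tilde w_\e=V_\e+[(u_n^0-V_\e)-\tilde w_\e]$: the first is immediate, the third is controlled by the Parseval bound above, and the second is obtained by expanding $q_\e(V_\e+[(u_n^0-V_\e)-\tilde w_\e],V_\e+[(u_n^0-V_\e)-\tilde w_\e])$, using Cauchy--Schwarz on the cross term, and absorbing the $O(\|V_\e\|_{L^2}^2)$ contribution into $O(\|V_\e\|_{L^2(\Omega_\e)}\sqrt{q_\e(V_\e,V_\e)})$ via coercivity.

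For the eigenvalue expansion, the $L^2$-estimate just proved forces $\|\tilde w_\e\|_{L^2(\Omega_\e)}\to\|u_n^0\|_{L^2(\Omega)}=1$, so $\tilde w_\e\neq 0$ for $\e$ small and $\rho_n^\e=q_\e(\tilde w_\e,\tilde w_\e)/\|\tilde w_\e\|_{L^2(\Omega_\e)}^2$. Using the $L^2$- and $q_\e$-orthogonality of $\tilde w_\e$ and $(u_n^0-V_\e)-\tilde w_\e$, the numerator becomes $q_\e(u_n^0-V_\e,u_n^0-V_\e)+O(\|V_\e\|_{L^2}^2)$, which I would simplify using $q_\e(V_\e,V_\e)=L_\e(V_\e)$ and $q_\e(u_n^0,V_\e)=L_\e(V_\e)+\rho_n^0(u_n^0,V_\e)_{L^2}$ together with $q_\e(u_n^0,u_n^0)=L_\e(u_n^0)+\rho_n^0\int_{\Omega_\e}|u_n^0|^2$ into
\begin{equation*}
q_\e(u_n^0-V_\e,u_n^0-V_\e)=\rho_n^0\!\left[\int_{\Omega_\e}|u_n^0|^2\dx-(u_n^0,V_\e)_{L^2(\Omega_\e)}\right].
\end{equation*}
The denominator expands, by the same orthogonality, as $\int_{\Omega_\e}|u_n^0|^2\dx-2(u_n^0,V_\e)_{L^2(\Omega_\e)}+O(\|V_\e\|_{L^2}^2)$. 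Subtracting $\rho_n^0$ from the quotient and simplifying produces the claimed formula. The delicate point throughout is the uniform control of the spectral gap and of the tails $\rho_j^\e/(\rho_j^\e-\rho_n^0)^2$, needed to ensure that all constants are independent of $\e$; once this is set up, every subsequent step is essentially algebra and Cauchy--Schwarz.
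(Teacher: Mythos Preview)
Your argument is correct. The paper does not actually supply a proof of this theorem: it records the statement as a specialization of \cite[Theorem~3.1]{BS_quantitative} and remarks that an alternative proof follows from the Colin de Verdi\`ere \emph{Lemma on small eigenvalues} as revisited in \cite[Lemma~A.1]{FLO_Neumann}. Your approach is precisely this second route---spectral decomposition of the approximate eigenfunction $u_n^0-V_\e$ against the $L^2$-orthonormal basis $\{\phi_j^\e\}$, combined with the uniform spectral gap---and all steps are sound, including the verification that $\rho_j^\e/(\rho_j^\e-\rho_n^0)^2$ is bounded uniformly in $j\neq n$ and in $\e$ (monotonicity of $t\mapsto t/(t-\rho_n^0)^2$ for $t>\rho_n^0$ handles $j>n$, while the finitely many indices $j<n$ are immediate from convergence). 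The algebraic simplification via the residual identity $q_\e(u_n^0-V_\e,v)=\rho_n^0(u_n^0,v)_{L^2(\Omega_\e)}$ tested with $v=u_n^0-V_\e$, and the absorption of the $-2(u_n^0,V_\e)_{L^2}$ term in the denominator into $O(\|V_\e\|_{L^2(\Omega_\e)})$ by Cauchy--Schwarz, complete the eigenvalue expansion exactly as stated.
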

We observe that condition  \eqref{eq:comp-emb} is satisfied if
  $K_\e$ is the closure of an open  Lipschitz set; in particular, it holds for
  $K_\e=\overline\Sigma_\e$, with $\Sigma_\e$ satisfying
  \eqref{def_Sigma_e} for some Lipschitz bounded open set $\Sigma$.

\section{} \label{sec_appendix-extension}

In this appendix, we present a proof of Proposition \ref{prop_extension}.
The following lemma states that the Sobolev extension operator from a
connected domain into an interior hole undergoes an estimate not only of
the complete $H^1$-norms, but also of the $L^2$-norms of gradients.

\begin{lemma}\label{l:extension_1}
Let  
$\Sigma\subset\R^N$ be an open, bounded, and  Lipschitz set such that
\begin{equation*}
  \overline{\Sigma} \subset B_{r_0} \quad\text{and}\quad
  B_{r_0}\setminus \overline{\Sigma}\text{ is connected},  
\end{equation*}
for some $r_0>0$. There exist a linear bounded extension operator
$\mathsf{E}:H^1( B_{r_0}\setminus \overline{\Sigma})\to H^1( B_{r_0})$
and a constant $C>0$ such that, for every
$u\in H^1( B_{r_0}\setminus \overline{\Sigma})$,
\begin{align}
  &
(\mathsf{E} u)\big|_{B_{r_0}\setminus
    \overline{\Sigma}}=u, \label{ineq_extension_H0}\\
  &\|\mathsf{E}  u\|^2_{H^1(B_{r_0})} \le C \|u\|_{ H^1(B_{r_0}
    \setminus
    \overline{\Sigma})}^2, \label{ineq_extension_H1}\\
  &\|\nabla (\mathsf{E}  u)\|^2_{L^2(B_{r_0})} \le C
    \|\nabla u\|_{L^2(B_{r_0}
    \setminus
    \overline{\Sigma})}^2.\label{ineq_extension_gradients}
\end{align}
\end{lemma}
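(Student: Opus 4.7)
The plan is to combine the classical Sobolev extension operator for bounded Lipschitz domains with a mean-value correction, so that the right-hand side of \eqref{ineq_extension_gradients} can be taken to involve only the gradient norm. Since $B_{r_0}\setminus\overline{\Sigma}$ is bounded, open, Lipschitz, and connected, the Calder\'on--Stein extension theorem furnishes a bounded linear operator
\[
\mathsf{E}_0:H^1(B_{r_0}\setminus\overline{\Sigma})\to H^1(\R^N),\qquad
(\mathsf{E}_0u)|_{B_{r_0}\setminus\overline{\Sigma}}=u,\quad
\|\mathsf{E}_0u\|_{H^1(\R^N)}\le C_0\|u\|_{H^1(B_{r_0}\setminus\overline{\Sigma})}.
\]
After restriction to $B_{r_0}$, this operator already yields \eqref{ineq_extension_H0} and \eqref{ineq_extension_H1}, but it is not sufficient for \eqref{ineq_extension_gradients}: adding a constant to $u$ does not affect the right-hand side of \eqref{ineq_extension_gradients}, while it does affect that of \eqref{ineq_extension_H1}.

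To cure this, I would subtract the mean before extending. Set
\[
\bar u:=\frac{1}{|B_{r_0}\setminus\overline{\Sigma}|_N}\int_{B_{r_0}\setminus\overline{\Sigma}}u\dx
\qquad\text{and}\qquad
\mathsf{E} u:=\mathsf{E}_0(u-\bar u)+\bar u.
\]
The operator $\mathsf{E}$ is linear, and on $B_{r_0}\setminus\overline{\Sigma}$ we have $\mathsf{E} u=(u-\bar u)+\bar u=u$, giving \eqref{ineq_extension_H0}. The decisive analytic input is the Poincar\'e--Wirtinger inequality on the connected bounded Lipschitz domain $B_{r_0}\setminus\overline{\Sigma}$, which provides a constant $C_P>0$ with
\[
\|u-\bar u\|_{L^2(B_{r_0}\setminus\overline{\Sigma})}\le C_P\|\nabla u\|_{L^2(B_{r_0}\setminus\overline{\Sigma})},
\]
hence $\|u-\bar u\|_{H^1(B_{r_0}\setminus\overline{\Sigma})}\le(1+C_P^2)^{1/2}\|\nabla u\|_{L^2(B_{r_0}\setminus\overline{\Sigma})}$. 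Since $\nabla(\mathsf{E} u)=\nabla(\mathsf{E}_0(u-\bar u))$, composing with the boundedness of $\mathsf{E}_0$ gives \eqref{ineq_extension_gradients}. For \eqref{ineq_extension_H1}, the triangle inequality together with the Cauchy--Schwarz bound $|\bar u|\le |B_{r_0}\setminus\overline{\Sigma}|_N^{-1/2}\|u\|_{L^2(B_{r_0}\setminus\overline{\Sigma})}$ controls $\|\mathsf{E} u\|_{L^2(B_{r_0})}$ by the full $H^1$-norm of $u$, and combining with \eqref{ineq_extension_gradients} yields \eqref{ineq_extension_H1}.

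The only step where the connectedness hypothesis in \eqref{def_Sigma} is genuinely used is the Poincar\'e--Wirtinger estimate, and this is precisely what upgrades the standard extension bound to the gradient-only estimate \eqref{ineq_extension_gradients}. Without connectedness the scalar $\bar u$ would have to be replaced by a piecewise constant that cannot simultaneously match all components, and \eqref{ineq_extension_gradients} would fail in general; this is the only conceptual point in the proof, the rest being an assembly of standard facts.
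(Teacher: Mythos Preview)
Your proof is correct. Both your argument and the paper's hinge on the same two ingredients: a bounded linear extension operator that sends constants to constants, together with the Poincar\'e--Wirtinger inequality on the connected domain $B_{r_0}\setminus\overline{\Sigma}$.

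The differences are in presentation and in the choice of extension. The paper defines $\mathsf{E}$ concretely as the harmonic extension into $\Sigma$ (which automatically sends constants to constants), and then proves the gradient estimate \eqref{ineq_extension_gradients} by contradiction: normalizing so that $\|\nabla(\mathsf{E}w_n)\|_{L^2(B_{r_0})}=1$ while $\|\nabla w_n\|_{L^2(B_{r_0}\setminus\overline{\Sigma})}\to0$, subtracting the mean $\gamma_n$, and using linearity plus the $H^1$-bound to reach a contradiction. Your approach instead starts from an arbitrary Calder\'on--Stein extension $\mathsf{E}_0$ and \emph{forces} the constants-to-constants property by the mean-correction $\mathsf{E}u=\mathsf{E}_0(u-\bar u)+\bar u$; this makes the argument direct rather than by contradiction, and makes transparent that \emph{any} bounded $H^1$-extension can be upgraded in this way. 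The paper's choice has the mild advantage that the harmonic extension is explicit and uses only the interior of $\Sigma$, while yours has the advantage of being more modular and avoiding the contradiction step.
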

\begin{proof}
Let  $\mathsf{E}: H^1(B_{r_0} \setminus \overline{\Sigma})
\to H^1(B_{r_0})$ be the linear and bounded extension
operator defined as
\begin{equation*}
  \mathsf{E} u=
  \begin{cases}
    u,&\text{in }B_{r_0} \setminus \overline{\Sigma},\\
    v,&\text{in }\Sigma,
  \end{cases}
\end{equation*}
where $v\in H^1(\Sigma)$ is the unique harmonic function inside
$\Sigma$ whose trace on $\partial\Sigma$ coincides with the trace of
$u$.  In particular, $\mathsf{E}$ satisfies \eqref{ineq_extension_H0}
and \eqref{ineq_extension_H1}. Moreover, $\mathsf{E}u\equiv\gamma$ in
$B_{r_0}$ if $u\equiv\gamma$ in $B_{r_0} \setminus \overline{\Sigma}$
for some constant $\gamma\in\R$.

Assume by contradiction that \eqref{ineq_extension_gradients} does not
hold for any choice of the constant $C$.  Then there exists a sequence
$\{u_n\}_{n \in \mathbb{N}\setminus\{0\}}\subset H^1( B_{r_0}\setminus
\overline{\Sigma})$ such that
\begin{equation*}
  \|\nabla (\mathsf{E}  u_n)\|^2_{L^2(B_{r_0})} >n
  \|\nabla u_n\|_{L^2(B_{r_0}
    \setminus
    \overline{\Sigma})}^2\quad\text{for all }n\in\N \setminus\{0\}.
\end{equation*}
Let us define
\begin{equation*}
w_n:=\frac{u_n}{ \|\nabla (\mathsf{E} u_n)\|_{L^2(B_{r_0})}}.
\end{equation*}
Then 
\begin{equation*}
\norm{\nabla w_n}^2_{L^2(B_{r_0}
    \setminus
    \overline{\Sigma}) } < \frac{1}{n},
\end{equation*}
while 
\begin{equation*}
\|\nabla (\mathsf{E}  w_n)\|_{L^2(B_{r_0})}=1
\end{equation*}
for every $n \in \mathbb{N} \setminus \{0\}$.  By connectedness of
$B_{r_0} \setminus \overline{\Sigma}$ and the Poincar\'e-Wirtinger
inequality, see for example \cite[Theorem 13.27]{L_book_sobolev},
\begin{equation*}
\int_{B_{r_0} \setminus   \overline{\Sigma}} |w_n -\gamma_n|^2 \dx
\le
\kappa \int_{B_{r_0} \setminus   \overline{\Sigma}} |\nabla w_n|^2 \dx, 
\end{equation*}
for some $\kappa>0$ depending only on $B_{r_0}$ and $\Sigma$, where 
\begin{equation*}
  \gamma_n:= \frac1{|B_{r_0} \setminus   \overline{\Sigma}|_N}
  \int_{B_{r_0} \setminus   \overline{\Sigma}} w_n \dx.
\end{equation*}
Since $\nabla (\mathsf{E} \gamma_n)\equiv 0$ in $B_{r_0}$, it follows
that, for every $n \in \mathbb{N}\setminus\{0\}$,
\begin{align*}
  1&=\|\nabla (\mathsf{E}  w_n)\|_{L^2(B_{r_0})}^2=
  \|\nabla (\mathsf{E} ( w_n-\gamma_n))\|_{L^2(B_{r_0})}^2
  \le
\|\mathsf{E} ( w_n-\gamma_n)\|_{H^1(B_{r_0})}^2\\
&\le C \|w_n-\gamma_n\|^2_{H^1(B_{r_0}\setminus \overline{\Sigma})}
\le C(1+\kappa) \int_{B_{r_0}\setminus \overline{\Sigma} } |\nabla
w_n|^2 \dx
\le \frac{C(1+\kappa)}{n},
\end{align*}
a contradiction.
\end{proof}

\begin{proof}[Proof of Proposition \ref{prop_extension}]
 Thanks to an easy translation argument, it is not restrictive to
 assume that $x_0=0$, so that $\overline{B_{r_0}}\subset\Omega$,
 $\overline{\Sigma} \subset B_{r_0}$, and
 $\Sigma_\e=\e\Sigma$. For every $\e\in(0,1]$ and $u\in H^1(\Omega \setminus
 \overline{\Sigma}_\e)$ we define
 \begin{equation*}
   \Ee u(x)=
   \begin{cases}
     \mathsf{E}
     \big(u(\e\cdot)\big|_{B_{r_0}\setminus\overline{\Sigma}}\big)(x/\e),&\text{if }
       x\in \e B_{r_0},\\
     u(x),&\text{if }
       x\in \Omega\setminus \e\overline B_{r_0},
   \end{cases}
 \end{equation*}
 where
 $\mathsf{E}: H^1(B_{r_0} \setminus \overline{\Sigma}) \to
 H^1(B_{r_0})$ is the extension operator provided in Lemma
 \ref{l:extension_1}.  Hence, for every $\e\in(0,1]$,
 $\Ee: H^1(\Omega \setminus \overline{\Sigma}_\e)\to H^1(\Omega)$ is a
 well-defined linear operator satisfying \eqref{eq_Ee} in view of
 \eqref{ineq_extension_H0}. For every
 $u\in H^1(\Omega \setminus \overline{\Sigma}_\e)$, two changes of
 variables and \eqref{ineq_extension_H1} yield
 \begin{align}
   \notag  \|\Ee u\|_{L^2(\Omega)}^2&=   \int_{\Omega\setminus\e
                                      \overline{B_{r_0}}}|u|^2\dx+
                                      \int_{\e B_{r_0}}|\Ee u|^2\dx\\
   \notag & =\int_{\Omega\setminus\e
            \overline{B_{r_0}}}|u|^2\dx+\e^N
            \int_{B_{r_0}}|\mathsf{E} u_\e|^2\dx\\
   \notag&\leq \int_{\Omega\setminus\e
           \overline{B_{r_0}}}|u|^2\dx+\e^N C
           \int_{B_{r_0}\setminus\overline{\Sigma}}\big(|\nabla
           u_\e|^2+u_\e^2\big)\dx
   \\
   \label{eq:Ee1}&= \int_{\Omega\setminus\e
   \overline{B_{r_0}}}|u|^2\dx+ C
     \int_{\e B_{r_0}\setminus\overline{\Sigma_\e}}\big(\e^2|\nabla
     u|^2+u^2\big)\dx\leq (C+1)\|u\|_{H^1(\Omega\setminus\overline{\Sigma_\e})}^2
 \end{align}
 where $u_\e(x):=u(\e x)$.  Furthermore, by two change of variables and
 \eqref{ineq_extension_gradients},
 \begin{align}
   \notag  \|\nabla(\Ee u)\|_{L^2(\Omega)}^2&=   \int_{\Omega\setminus\e
                                              \overline{B_{r_0}}}|\nabla u|^2\dx+
                                              \int_{\e B_{r_0}}|\nabla(\Ee u)|^2\dx\\
   \notag &=   \int_{\Omega\setminus\e
            \overline{B_{r_0}}}|\nabla u|^2\dx+
            \e^{N-2}\int_{B_{r_0}}|\nabla(\mathsf{E} u_\e)|^2\dx\\
                                            &\leq  \int_{\Omega\setminus\e
                                              \notag \overline{B_{r_0}}}|\nabla u|^2\dx+C
                                              \e^{N-2}\int_{B_{r_0}\setminus\overline{\Sigma}}
                                              |\nabla u_\e|^2\dx\\
  \label{eq:Ee2} &= \int_{\Omega\setminus\e
   \overline{B_{r_0}}}|\nabla u|^2\dx+C
                              \int_{\e
     B_{r_0}\setminus\overline{\Sigma_\e}}|\nabla u|^2\dx
     \leq (C+1)\|\nabla u\|_{L^2(\Omega\setminus\overline{\Sigma_\e})}^2.
 \end{align}
 The uniform estimate \eqref{ineq_extension} then follows from
 \eqref{eq:Ee1}--\eqref{eq:Ee2}.
\end{proof}

\begin{remark}
  The assumption that the set $B_{r_0}\setminus\overline{\Sigma}$ is
  connected is necessary for the validity of Proposition
  \ref{prop_extension}, as shown in the following example.  Let
  $x_0=0\in\Omega$ and $r_0>0$ be such that
  $\overline{B_{r_0}}\subset\Omega$.  Let us consider an annular hole
  $\Sigma:= B_{r_0/2}\setminus \overline{ B_{r_0/4}}$, which clearly
  violates the connectedness condition of
  $B_{r_0}\setminus\overline{\Sigma}$.  Let us assume, for the sake of
  contradiction, that there exists a family of uniformly bounded
  extension operators
  $\Ee:H^1(\Omega \setminus \overline{\Sigma}_\e) \to H^1(\Omega)$
  satisfying \eqref{eq_Ee} and \eqref{ineq_extension} for every
  $\e\in(0,1]$, where $\Sigma_\e=\e\Sigma$.  Let
\begin{equation*}
u_\e(x):=
\begin{cases}
1, &\text{ if } x \in B_{r_0\e/4},\\
0, &\text{ if } x \in \Omega \setminus \overline{B_{r_0\e/2}}.
\end{cases}
\end{equation*}
Then, letting $\omega_N$ be the Lebesgue measure of the $N$-dimensional unit ball,
\begin{align}
\notag\frac{r_0^N\omega_N}{4^N}\e^N  &=|B_{r_0\e/4}|_N
  \\
  \label{eq:contradicts}&=\norm{u_\e}^2_{H^1(\Omega \setminus \overline{\Sigma}_\e)}\ge
\frac{1}{K} \int_{B_{r_0\e/2}} |\nabla (\Ee u_\e)|^2 \dx \ge
\frac{1}{K}
 \mathop{\rm Cap}\nolimits_{B_{r_0\e/2}}\left(\overline{B_{r_0\e/4}} \right),
\end{align}
for every $\e\in(0,1]$, where
$ \mathop{\rm Cap}_{B_{r_0\e/2}}\left(\overline{B_{r_0\e/4}} \right)$
is the classical condenser capacity of $\overline{B_{r_0\e/4}}$ in
$B_{r_0\e/2}$.
A direct computation yields
  \begin{equation*}
     \mathop{\rm Cap}_{B_{r_0\e/2}}\left(\overline{B_{r_0\e/4}} \right)=
    \begin{cases}
      \dfrac{N (N-2)\omega_N
        r_0^{N-2}}{4^{N-2}-2^{N-2}}\,\e^{N-2},&\text{if }N\geq
      3,\\[10pt]
      \dfrac{2\pi}{\log 2},&\text{if }N=2,
    \end{cases}
  \end{equation*}
 thus contradicting \eqref{eq:contradicts}. 
\end{remark}

\bibliographystyle{acm}

\end{document}